\newtheorem*{definition[theorem]}{Definition}
\newtheorem{theorem}{Theorem}[section]
\newtheorem{lemma}{Lemma}[section]
\newtheorem{example}[theorem]{Example}
\numberwithin{equation}{section}
\date{}
\begin{document}
\title{A fast second-order accurate difference schemes for time distributed-order and Riesz space fractional diffusion equations}

\author{Huan-Yan Jian\thanks{\textit{E-mail address:} uestc\_hyjian@sina.com},
Ting-Zhu Huang \thanks{Corresponding author. \textit{E-mail address:}
tingzhuhuang@126.com. Tel.: 86-28-61831608},
Xi-Le Zhao \thanks{\textit{E-mail address:} xlzhao122003@163.com},
Yong-Liang Zhao \thanks{\textit{E-mail address:}  uestc\_ylzhao@sina.com}\\
{\small{\it School of Mathematical Sciences,}}\\
{\small{\it University of Electronic Science and Technology of China,}}\\
{\small{\it Chengdu, Sichuan 611731, P.R. China}}\\
}
\maketitle
\begin{center}
\textbf{Abstract}
\end{center}
\quad The aim of this paper is to develop fast second-order accurate difference schemes for solving one- and
 two-dimensional time distributed-order and Riesz space fractional diffusion equations. We
 adopt the same measures for one- and two-dimensional problems as follows: we first transform the time distributed-order
 fractional diffusion problem into the multi-term time-space fractional diffusion problem with the composite trapezoid
 formula. Then, we propose a second-order accurate difference scheme based on the interpolation approximation on a special point
 to solve the resultant problem. Meanwhile, the unconditional stability and convergence of the new difference scheme in $L_2$-norm are proved. Furthermore, we find that the discretizations lead to a series of Toeplitz systems which can be efficiently solved by Krylov subspace methods with suitable circulant preconditioners. Finally, numerical results are presented to show the effectiveness of the proposed difference methods and demonstrate the fast convergence of our preconditioned Krylov subspace
 methods.

\textbf{Keywords}: Distributed-order equation; Riesz fractional derivative; Multi-term fractional diffusion; Toeplitz matrix; Circulant preconditioner; Krylov subspace method.

\section{Introduction}

\quad Fractional diffusion equations (FDEs) have recently attracted considerable attention and interest due to its
wide applications \cite{scher1975anomalous,schneider1989fractional,gao2015some,Zhao2013Total}.
Specifically, the time-fractional anomalous diffusion equation has become the focus of intensive investigations from both theoretical and practical perspectives \cite{henry2006anomalous,zhuang2008new,sun2009variable,magin2008anomalous}.

Recently, the time-fractional anomalous diffusion equation with a single-term temporal derivative has been
discussed and studied in \cite{cui2009compact,li2014higher}. In \cite{langlands2006solution,liu2009numerical,liu2011finite}, the two-term time fractional diffusion equation was reported for describing processes that tend to be less anomalous. More generalized models were also developed as multi-term FDEs \cite{zheng2016high}, where several
fractional derivatives were simultaneously involved. To solve such problems, multiple numerical approaches
\cite{gu2015strang,luo2016quadratic,Li2017meng} have emerged, among which the finite difference method has grown popular \cite{chen2007fourier,lin2007finite,gu2016fast,chen2010numerical,cui2013convergence,zhang2014error}.

Although the single-term and multi-term FDEs are used extensively in many scientific fields, it is difficult for them to describe the non-Markovian processes for continuous time-scale distributions. Therefore, the time distributed-order FDEs \cite{chechkin2002retarding} began to attract the attention of researchers. It can be considered as a generalization of the multi-term FDEs and has been found to be an important tool for modeling ultraslow diffusion processes, accelerating sub-diffusion and other forms of strong anomaly \cite{chechkin2002retarding,kochubei2008distributed,gao2015two,hu2016implicit}.
The numerical method presented in \cite{katsikadelis2014numerical} for solving the distributed-order FDE consists of: (a) approximation of the integral with a finite sum using a simple quadrature rule so that the distributed order FDE is converted into a multi-term FDE and (b) development of a numerical method to solve the resultant multi-term FDE.
Such idea is essential for numerically solving the distributed-order FDEs and should be studied extensively. However, as far as we know, only a few algorithms have been developed to solve the distribution-order FDEs based on this idea.
Ye et al. \cite{ye2014numerical} proposed an implicit difference method for the time distributed-order and Riesz space FDEs on bounded domains and proved the difference method was unconditionally stable and convergent.
An implicit numerical method of a new time distributed-order and two-sided space-fractional advection-dispersion equation were constructed by Hu et al. \cite{hu2016implicit}.
In \cite{gao2015two}, Gao et al. explored two alternating direction implicit difference schemes with the unconditional stability and convergence analysis for solving the two-dimensional distributed-order FDEs.
Bu et al. \cite{bu2017finite} introduced the finite difference method for a class of distributed-order time FDEs on bounded domains.
In addition, most of these numerical approaches have no complete theoretical analysis of stability and convergence, especially for the time distribution-order and spatial FDEs, see \cite{katsikadelis2014numerical,liu2013numerical} for details.

In the current paper, inspired by the above observations, we consider effective numerical methods for the
following new time distributed-order and Riesz space FDEs (TDRFDEs):
\begin{align}
&{}D_{t}^{\omega(\alpha)}u(\textbf{x},t)=Au(\textbf{x},t)+f(\textbf{x},t),\quad \textbf{x}\in\Omega,~0<t\leq T,&\label{1.1}\\
&u(\textbf{x},t)|_{\textbf{x}\in\partial\Omega}=0,\quad 0\leq t\leq T,&\label{1.2}\\
&u(\textbf{x},0)=\phi(\textbf{x}),\quad \textbf{x}\in\Omega,\label{1.3}&
\end{align}
where $\alpha\in(0,1]$, $A$ is an operator and the function $f(\textbf{x},t)$ is the source term with sufficient smoothness. In particular, if $\Omega=(x_L,x_R)\subset{\mathbb{R}}$, then
$$A=K\frac{\partial^{\beta}}{\partial|x|^{\beta}},\quad K>0,\quad f(\textbf{x},t)=f(x,t);$$
if $\Omega=(x_L,x_R)\times(y_L,y_R)\subset{\mathbb{R}^2}$, then
$$A=K_1\frac{\partial^{\beta}}{\partial|x|^{\beta}}+K_2\frac{\partial^{\gamma}}{\partial|y|^{\gamma}},\quad K_1,K_2>0,\quad f(\textbf{x},t)=f(x,y,t),$$
where $\beta,\gamma\in(1,2]$, and the $\frac{\partial^{\beta}}{\partial\mid x\mid^{\beta}}$ is the Riesz fractional derivative of order $\beta\in(1,2]$ defined as \cite{jiang2012analytical} ($\frac{\partial^{\gamma}}{\partial\mid y\mid^{\gamma}}$ is defined similarly)
\begin{equation*}
\frac{\partial^{\beta}u(x,t)}{\partial|x|^{\beta}}=
\begin{cases}
-\frac{1}{2\cos(\beta\pi/2)\Gamma(2-\beta)}\frac{d^2}{dx^2}\int_{x_L}^{x_R}| x-\xi|^{1-\beta}u(\xi,t)d\xi,\quad 1<\beta<2,\\
\frac{\partial^{2}u(x,t)}{\partial
x^{2}},\quad \beta=2.\\
\end{cases}
\end{equation*}
Moreover, the time distributed-order operator ${}D_{t}^{\omega(\alpha)}$ is defined by \cite{meerschaert2011distributed}
\begin{align*}
 D_t^{\omega(\alpha)}u(\textbf{x},t)=\int_0^1\omega(\alpha){}^C_0D_t^{\alpha}u(\textbf{x},t)d\alpha,
\end{align*}
where $^C_0D_t^{\alpha}$ denotes the Caputo fractional derivative \cite{li2017analyticity} which is defined as follows:
\begin{equation*}
^C_0D_t^{\alpha}u(\textbf{x},t)=
\begin{cases}
\frac{1}{\Gamma(1-\alpha)}\int_0^t(t-\xi)^{-\alpha}\frac{\partial{u}}{\partial\xi}(\textbf{x},\xi)d\xi,\quad 0<\alpha<1,\\
u_t(\textbf{x},t),\quad \alpha=1,\\
\end{cases}
\end{equation*}
and the non-negative weight function $\omega(\alpha)$ satisfies that
\begin{equation*}
0\leq\omega(\alpha),~\omega(\alpha)\neq0,~\alpha\in[0,1],~0<\int_0^{1}\omega(\alpha)d\alpha<\infty,
\end{equation*}
where $\Gamma(\cdot)$ denotes the Gamma function.

Nonlocal behavior has been remarked as one of the main characteristics of the fractional differential operator.
As a result, most numerical methods for FDEs produce dense matrices or even full coefficient matrices in
one-dimensional cases \cite{lei2013circulant,gu2016fast}. Traditional methods, such as Gaussian elimination,
need computational workload of $\mathcal{O}(M^3)$ and memory capacity of $\mathcal{O}(M^2)$, where $M$ is the number of grid points \cite{lei2013circulant}. The Krylov subspace methods are studied and adopted to reduce the costs \cite{gu2016fast,gu2015hybridized,gu2015k,wang2011fast}. The convergent speed of the Krylov subspace
methods is dependent on the conditions of the discretized systems. To improve the performance of iterative
methods, many preconditioners \cite{lei2013circulant,Zhao2012DCT} are always designed according to the
structure of the linear systems. For one-dimensional cases, Wang et al. \cite{wang2011fast} made the important
discovery that the resultant systems had Toeplitz coefficient matrices. By exploiting this structure, the memory
requirement can be reduced from $\mathcal{O}(M^2)$ to $\mathcal{O}(M)$, and the fast Fourier transform (FFT)
can be used to evaluate the matrix-vector product in $\mathcal{O}(M\log M)$ operations. Moreover, the coefficient
matrices discretized from (\ref{1.1})-(\ref{1.3}) should be symmetric positive definite Toeplitz matrices due
to the existence of Riesz fractional derivatives \cite{Yang2010,ye2014numerical}. The circulant preconditioners
\cite{chan2007introduction,chan1989toeplitz,lei2013circulant} proved to be good choices to accelerate the
convergence of Krylov subspace methods when solving the discretized linear systems.
%
%
In high-dimensional cases, a nonsingular multilevel circulant preconditioner was proposed by Lei et al. \cite{leichenzhang2016}
to accelerate the convergence of Krylov subspace methods efficiently. In \cite{chou2017385}, Chou et al. illustrated
the efficiency of applying an approximate inverse preconditioner to the high dimensional FDEs when Krylov
subspace methods are employed. They also showed that under certain conditions, the normalized preconditioned matrix is equal to the sum of an identity matrix, a matrix with small norm, and a matrix with low rank, such that the preconditioned Krylov
subspace method converges superlinearly.

In this paper, we focus on establishing a fast numerical method and investigating the unconditional stability and
convergence for solving the TDRFDEs \eqref{1.1}-\eqref{1.3}. We first transform the TDRFDEs \eqref{1.1}-\eqref{1.3}
into the multi-term time-space FDEs based on the composite trapezoid formula. Then we apply the interpolation
approximation, as introduced by Gao et al. in \cite{gaotemporal}, to approximate the time derivatives of the multi-term
time-space FDEs at a special point. The global second-order numerical accuracy in time is independent
to the order of fractional derivatives. To gather numerical solutions with high-order accuracy in space, the fractional
centred difference formula \cite{ye2014numerical} is used to discrete the space Riesz derivative. Therefore we develop
a new difference scheme which converges with the second-order accuracy in time, space and distributed-order. On the other hand, by taking advantage of Toeplitz structure of the resultant linear systems, we adopt
the Krylov subspace method with efficient circulant preconditioners. It also proves that the eigenvalues
of the preconditioned matrices are clustered around 1, and the convergence rate of our proposed iterative method
is superlinear.

The rest of the paper is arranged as follows. In Section \ref{section2}, we study the TDRFDEs in one-dimensional case
and present its corresponding difference scheme. The uniqueness, unconditional stability and convergence of the difference
method are proved. Meanwhile, we design a preconditioned Krylov subspace method to solve the resultant Toeplitz linear system.
In Section \ref{section3}, the two-dimensional TDRFDE is discussed. We demonstrate that the difference scheme is uniquely
solvable, unconditionally stable and convergent with the convergence order $\mathcal{O}(h_1^2+h_2^2+\tau^2+\Delta\alpha^2)$.
We also adopt the preconditioned Krylov subspace method with suitable circulant preconditioners to handle the resulting
systems. Numerical experiments are carried out in Section \ref{section4} to illustrate the efficiency of our numerical
approaches. Finally, the paper closes with conclusions and remarks in Section \ref{section5}.

\section{One-dimensional problem}\label{section2}
\quad
Consider the following one-dimensional TDRFDE:
\begin{align}
&{}D_{t}^{\omega(\alpha)}u(x,t)=K\frac{\partial^{\beta}u(x,t)}{\partial|
x|^{\beta}}+f(x,t),\quad 0<x<L,~0<t\leq T,&\label{2.1}\\
&u(0,t)=0,\quad u(L,t)=0,\quad 0\leq t\leq T,&\label{2.2}\\
&u(x,0)=\phi(x),\quad 0<x<L.&\label{2.3}
\end{align}

In this section, we show that the discretizations for the distributed-order integral term of \eqref{2.1}-\eqref{2.3}
by the composite trapezoid formula lead to multi-term time-space FDE. We propose the second-order difference
scheme based on the interpolation approximation on a special point to solve the equations. We also
prove that the difference scheme is uniquely solvable, unconditionally stable and convergent with second-order accuracy
in time, space and distributed-order integral variables. Moreover, we propose an efficient implementation
based on Krylov subspace solver with suitable circulant preconditioners to solve the resultant Toeplitz linear system.

\subsection{Numerical discretization of the \eqref{2.1}-\eqref{2.3}}\label{section2.1}
\quad We first discretize the integral interval $[0,1]$ by the grid $0=\alpha_0<\alpha_1<\cdots<\alpha_{2J}=1$ with $\Delta\alpha=\frac{1}{2J}$ and $\alpha_l=l\Delta\alpha,~l=0,1,2,\cdots,2J$. The following lemma gives a complete description of the numerical approximation to the distributed-order integral term.

\begin{lemma}(\hspace*{-0.35em} The composite trapezoid formula \cite{gao2015two,gao2015some})
Let $z(\alpha)\in C^2([0,1])$, then we have
\begin{equation*}
\int_0^1z(\alpha)d\alpha=\Delta\alpha\sum\limits_{l=0}^{2J} d_lz(\alpha_l)-\frac{\Delta\alpha^2}{12}z^{(2)}(\eta),\quad \eta\in(0,1),
\end{equation*}
where
\begin{equation*}
d_l=
\begin{cases}
\frac{1}{2},\quad l=0,~2J,\\
1,\quad 1\leq l\leq2J-1.
\end{cases}
\end{equation*}
\label{lemma2.1}
\end{lemma}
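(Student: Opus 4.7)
The plan is to apply the elementary single-interval trapezoid rule with remainder on each subinterval $[\alpha_l,\alpha_{l+1}]$ of the uniform partition and then aggregate the contributions. First I would invoke the standard identity
\begin{equation*}
\int_a^b z(\alpha)\,d\alpha = \frac{b-a}{2}\bigl[z(a)+z(b)\bigr] - \frac{(b-a)^3}{12}\,z''(\xi)
\end{equation*}
valid for some $\xi\in(a,b)$ whenever $z\in C^2([a,b])$. This is the classical local trapezoid error and can be derived either by integrating the Peano-kernel representation of the error functional twice by parts, or by subtracting the linear interpolant of $z$ at the endpoints and applying the mean value theorem to the resulting remainder. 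Taking $a=\alpha_l$, $b=\alpha_{l+1}$, and $b-a=\Delta\alpha$ yields the local formula with some $\xi_l\in(\alpha_l,\alpha_{l+1})$.

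Next I would sum over $l=0,1,\ldots,2J-1$. Each endpoint $z(\alpha_0)$ and $z(\alpha_{2J})$ is counted once with weight $\tfrac{1}{2}$, while each interior node $z(\alpha_l)$ with $1\le l\le 2J-1$ is counted twice (once from the interval to its left and once from the interval to its right), contributing total weight $1$. This produces precisely the coefficients $d_l$ stated in the lemma, and the quadrature part assumes the form $\Delta\alpha\sum_{l=0}^{2J} d_l\,z(\alpha_l)$.

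It remains to consolidate the accumulated error $-\frac{(\Delta\alpha)^3}{12}\sum_{l=0}^{2J-1} z''(\xi_l)$ into the claimed single-point expression. Since $z\in C^2([0,1])$, $z''$ is continuous on $[0,1]$, so its arithmetic mean $\frac{1}{2J}\sum_{l=0}^{2J-1}z''(\xi_l)$ lies between $\min_{[0,1]} z''$ and $\max_{[0,1]} z''$, and the intermediate value theorem furnishes some $\eta\in(0,1)$ with $z''(\eta)$ equal to this mean. Substituting and using $2J\cdot\Delta\alpha=1$ collapses the error to $-\frac{(\Delta\alpha)^2}{12}z''(\eta)$, which is exactly the stated remainder.

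I do not anticipate any genuine obstacle: the argument is essentially bookkeeping of the endpoint/interior weights combined with the IVT consolidation. The only step that deserves a line or two of justification is the local error formula itself; everything else is routine. If one prefers to avoid quoting the local trapezoid remainder as a black box, the cleanest self-contained derivation is via the explicit Peano kernel $K(\alpha)=\tfrac{1}{2}(\alpha-a)(b-\alpha)\ge 0$, which is nonnegative on $[a,b]$ and allows a direct application of the mean value theorem for integrals.
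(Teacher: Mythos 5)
Your proof is correct and is the standard derivation of the composite trapezoid rule with remainder; the paper itself offers no proof of this lemma, simply citing it to \cite{gao2015two,gao2015some}, so there is nothing to compare against beyond noting that your argument is the expected one. The only point worth tightening is the consolidation step: to guarantee $\eta\in(0,1)$ rather than merely $\eta\in[0,1]$, apply the intermediate value theorem to $z''$ on the closed interval between the $\xi_l$ attaining $\min_l z''(\xi_l)$ and the one attaining $\max_l z''(\xi_l)$ (each $\xi_l$ lies in the open interval $(\alpha_l,\alpha_{l+1})\subset(0,1)$), rather than bounding the mean by the extrema of $z''$ over all of $[0,1]$.
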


 Considering the left side of \eqref{2.1}, let
$z(\alpha)=\omega(\alpha){}^C_0D_t^{\alpha}u(x,t)$ and using Lemma \ref{lemma2.1}, we can obtain
\begin{equation}
{}D_{t}^{\omega(\alpha)}u(x,t)=\Delta\alpha\sum\limits_{r=0}^{2J}d_r\omega(\alpha_r) {}^C_0D_t^{\alpha_r}u(x,t)+\mathcal{O}(\Delta\alpha^2).\label{2.4}
\end{equation}

Let $m=2J,~\lambda_r=d_r\omega(\alpha_r)\Delta\alpha$. The problem \eqref{2.1}-\eqref{2.3} is now converted into the following multi-term time-space FDE:
\begin{align}
&\sum\limits_{r=0}^{m}\lambda_r{}^C_0D_t^{\alpha_r}u(x,t)=K\frac{\partial^{\beta}u(x,t)}{\partial|
x|^{\beta}}+f(x,t),\quad 0<x<L,~0<t\leq T,&\label{2.5}\\
&u(0,t)=0,\quad u(L,t)=0,\quad 0\leq t\leq T,&\label{2.6}\\
&u(x,0)=\phi(x),\quad 0<x<L.&\label{2.7}
\end{align}

Next, we discrete the domain $[0,L]\times[0,T]$ with $ x_i=ih~(0\leq i\leq M)$ and $t_n=n\tau~(0\leq n\leq N)$, where $h=\frac{L}{M}$ and $\tau=\frac{T}{N}$ are space and time step sizes respectively.
Then we introduce the following preliminary lemma:
\begin{lemma}\hspace*{-0.35em} Suppose
\begin{align*}
F(\sigma)=\sum\limits_{r=0}^{m}\frac{\lambda_r}{\Gamma(3-\alpha_r)}\sigma^{1-\alpha_r}
\left[\sigma-\left(1-\frac{\alpha_r}{2}\right)\right]\tau^{2-\alpha_r},\quad \sigma\geq0.
\end{align*}
Let $a=\min\limits_{0\leq r\leq m}\left\{1-\frac{\alpha_r}{2}\right\},\quad b=\max\limits_{0\leq r\leq m}\left\{1-\frac{\alpha_r}{2}\right\},$
we can obtain that the equation $F(\sigma)=0$ has a unique positive root $\sigma^*\in[a,b]$,\\
where
\begin{align*}
a=1-\frac{1}{2}\max\limits_{0\leq r\leq m}\{\alpha_r\}=1-\frac{\alpha_m}{2}=\frac{1}{2},\quad b=1-\frac{1}{2}\min\limits_{0\leq r\leq m}\{\alpha_r\}=1-\frac{\alpha_0}{2}=1.
\end{align*}
\label{lemma2.2}
\end{lemma}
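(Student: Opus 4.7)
The plan is to combine the Intermediate Value Theorem with a monotonicity argument on the auxiliary decomposition $F(\sigma)=\sum_{r=0}^{m}c_r g_r(\sigma)$, where $c_r=\lambda_r\tau^{2-\alpha_r}/\Gamma(3-\alpha_r)>0$ and $g_r(\sigma)=\sigma^{1-\alpha_r}\bigl[\sigma-(1-\tfrac{\alpha_r}{2})\bigr]$. Since $0=\alpha_0<\alpha_1<\cdots<\alpha_m=1$, the node $1-\alpha_r/2$ lies in $[1/2,1]$, attaining $1/2$ only at $r=m$ and $1$ only at $r=0$; this immediately identifies $a=1/2$ and $b=1$.

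First I would evaluate $F$ at the two endpoints. At $\sigma=a=1/2$ the summand $r=m$ vanishes because $1-\alpha_m/2=1/2$, while for every $r<m$ we have $1-\alpha_r/2>1/2$ so $g_r(1/2)<0$; combined with positive coefficients this gives $F(a)<0$. Symmetrically, at $\sigma=b=1$ the summand $r=0$ vanishes and every other summand is strictly positive, so $F(b)>0$. Continuity of $F$ on $(0,\infty)$ and the Intermediate Value Theorem then produce at least one root $\sigma^{*}\in(a,b)$.

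For uniqueness I would differentiate termwise to obtain
\begin{equation*}
g_r'(\sigma)=\sigma^{-\alpha_r}\bigl[(2-\alpha_r)\sigma-(1-\alpha_r)(1-\tfrac{\alpha_r}{2})\bigr].
\end{equation*}
For $\sigma\geq 1/2$ the chain of inequalities $(2-\alpha_r)\sigma\geq(2-\alpha_r)/2=1-\alpha_r/2\geq(1-\alpha_r)(1-\alpha_r/2)$ shows $g_r'(\sigma)\geq 0$, with the last inequality strict whenever $\alpha_r>0$, which holds for at least one index (e.g.\ $r=m$). Hence $F'(\sigma)>0$ on $[a,\infty)$, precluding a second root in that interval. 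To rule out roots in $(0,a)$, observe that there $\sigma<1-\alpha_r/2$ for every $r$, so each $g_r(\sigma)<0$ and $F(\sigma)<0$. Combining these observations yields the unique positive root $\sigma^{*}$, necessarily located in $[a,b]$.

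The main technical obstacle I expect is the monotonicity step: the bound $(2-\alpha_r)\sigma\geq(1-\alpha_r)(1-\alpha_r/2)$ must be verified uniformly in $r$ on $[1/2,\infty)$, and the argument has to be arranged so that strict positivity of $F'$ is already available at the left endpoint $\sigma=a$, rather than only in the open interval, since otherwise a borderline case could compromise uniqueness.
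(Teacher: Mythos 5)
Your proof is correct, and it is in fact more than the paper offers: the authors omit the argument entirely, deferring to Lemma 2.1 of Gao--Alikhanov--Sun, whose proof is exactly this kind of sign-check at $\sigma=\tfrac12$ and $\sigma=1$ combined with termwise monotonicity of $F$ on $[\tfrac12,\infty)$. Your endpoint evaluations, the derivative bound $(2-\alpha_r)\sigma\geq 1-\tfrac{\alpha_r}{2}\geq(1-\alpha_r)(1-\tfrac{\alpha_r}{2})$, the observation that the $r=m$ term (with $\alpha_m=1$) already forces $F'>0$ at the left endpoint, and the negativity of $F$ on $(0,\tfrac12)$ all check out, so the only caveat is the implicit assumption $\lambda_r>0$ for all $r$ (consistent with Lemma \ref{lemma2.3}); if some $\lambda_r$ vanish, the root may sit at an endpoint of $[a,b]$, which is why the lemma states the closed interval.
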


\begin{proof}
The proof is quite similar to Lemma 2.1 in \cite{gaotemporal} and thererfore is omitted.
\end{proof}

 For convenience, we let $\sigma=\sigma^*$, which means that $\sigma\in[\frac{1}{2},1]$ satisfies $F(\sigma)=0$.

Let $t_{n-1+\sigma}=(n-1+\sigma)\tau$, 
two lemmas are given below that will be useful in the discretizations of the multi-term time-space FDE later.
\begin{lemma}\hspace*{-0.35em} Suppose $u(t)\in C^3([t_0,t_n]) $, consider the linear combination of multi-term fractional derivatives $\sum\limits_{r=0}^{m}\lambda_r{}^C_0D_t^{\alpha_r}u(t)$ at the point $t=t_{n-1+\sigma}$, where $\lambda_r~(r=0,1,2,\cdots,m)>0$, $0\leq\alpha_0<\alpha_{1}<\cdots<\alpha_m\leq1$ and at least one of $\alpha_i$'s belongs to $(0,1)$.
The second-order accurate interpolation approximation for the $\sum\limits_{r=0}^{m}\lambda_r{}^C_0D_t^{\alpha_r}u(t)$ is as follows:
\begin{align*}
\sum\limits_{r=0}^{m}\lambda_r{}^C_0D_t^{\alpha_r}u(t_{n-1+\sigma})= \sum\limits_{k=0}^{n-1}\hat{c}_k^{(n)}\left[u(t_{n-k})-u(t_{n-k-1})\right]+\mathcal{O}(\tau^{3-\alpha_m}),
\end{align*}
where $$\hat{c}_k^{(n)}=\sum\limits_{r=0}^{m}\lambda_r\frac{\tau^{-\alpha_r}}{\Gamma(2-\alpha_r)}
c_k^{(n,\alpha_r)},$$
in which $c_0^{(n,\alpha_r)}=a_0^{(\alpha_r)}$, when $n=1$;\\
For $n\geq2$, we have
\begin{equation*}
c_k^{(n,\alpha_r)}=
\begin{cases}
a_0^{(\alpha_r)}+b_1^{(\alpha_r)},\quad\quad\quad\quad k=0,\\
a_k^{(\alpha_r)}+b_{k+1}^{(\alpha_r)}-b_k^{(\alpha_r)},\quad 1\leq k\leq n-2,\\
a_k^{(\alpha_r)}-b_k^{(\alpha_r)},\quad\quad\quad\quad k=n-1,
\end{cases}
\end{equation*}
where
\begin{align*}
&a_0^{\alpha_r}=\sigma^{1-\alpha_r};~ a_l^{\alpha_r}=(l+\sigma)^{1-\alpha_r}-(l-1+\sigma)^{1-\alpha_r}~(l\geq1),\\
&b_l^{\alpha_r}=\frac{1}{2-\alpha_r}\left[(l+\sigma)^{2-\alpha_r}-(l-1+\sigma)^{2-\alpha_r}\right]
-\frac{1}{2}\left[(l+\sigma)^{1-\alpha_r}+(l-1+\sigma)^{1-\alpha_r}\right].
\end{align*}
In particular, when $\alpha_r=1$, we have $c_0^{(n,\alpha_r)}=1,~c_k^{(n,\alpha_r)}=0~(1\leq k\leq n-1)$; when $\alpha_r=0$, we have $c_0^{(n,\alpha_r)}=\sigma,~c_k^{(n,\alpha_r)}=1~(1\leq k\leq n-1)$.
\label{lemma2.3}
\end{lemma}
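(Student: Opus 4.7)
The plan is to exploit the linearity of $\sum_{r=0}^m\lambda_r\,{}^C_0D_t^{\alpha_r}$ and reduce the lemma to a single-term interpolation estimate for each Caputo derivative ${}^C_0D_t^{\alpha_r}u(t_{n-1+\sigma})$; the coefficient $\hat c_k^{(n)}$ then arises by weighted summation with the factors $\lambda_r\tau^{-\alpha_r}/\Gamma(2-\alpha_r)$ pulled outside. For a generic $\alpha_r\in(0,1)$ I would start from the integral representation
\[
{}^C_0D_t^{\alpha_r}u(t_{n-1+\sigma})=\frac{1}{\Gamma(1-\alpha_r)}\int_0^{t_{n-1+\sigma}}(t_{n-1+\sigma}-s)^{-\alpha_r}u'(s)\,ds
\]
and split the integration range into the final fractional piece $[t_{n-1},t_{n-1+\sigma}]$ and the full subintervals $[t_k,t_{k+1}]$ for $0\le k\le n-2$.

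On the fractional piece I would replace $u$ by the linear interpolant through $(t_{n-1},u(t_{n-1}))$ and $(t_n,u(t_n))$ and evaluate the weighted integral explicitly, which produces exactly $a_0^{(\alpha_r)}=\sigma^{1-\alpha_r}$ multiplying $u(t_n)-u(t_{n-1})$. On each full subinterval $[t_k,t_{k+1}]$ with $1\le k\le n-2$ I would interpolate $u$ quadratically at the three backward nodes $\{t_{k-1},t_k,t_{k+1}\}$, switching to the forward triple $\{t_0,t_1,t_2\}$ on $[t_0,t_1]$ when $n\ge 2$; differentiating and integrating against the singular kernel produces, after explicit polynomial integration, a contribution of the form $a_l^{(\alpha_r)}\bigl(u(t_{k+1})-u(t_k)\bigr)+b_l^{(\alpha_r)}(\text{quadratic correction})$ with $l=n-1-k$, where $a_l^{(\alpha_r)}$ and $b_l^{(\alpha_r)}$ are precisely the closed-form expressions stated in the lemma. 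Collecting the coefficients of each difference $u(t_{n-k})-u(t_{n-k-1})$ and telescoping in the $b$-terms yields the three-case formula for $c_k^{(n,\alpha_r)}$: the $+b_1^{(\alpha_r)}$ at $k=0$ comes from the forward-quadratic endpoint, the telescoping $b_{k+1}^{(\alpha_r)}-b_k^{(\alpha_r)}$ appears in the bulk, and the leftover $-b_k^{(\alpha_r)}$ sits at $k=n-1$. The degenerate values $\alpha_r=0$ and $\alpha_r=1$ are verified by direct substitution in the Caputo definition.

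For the truncation order $\mathcal O(\tau^{3-\alpha_m})$, I would bound the two regions separately: linear interpolation on $[t_{n-1},t_{n-1+\sigma}]$ leaves an $\mathcal O(\tau^{2}\|u''\|_\infty)$ residual in $u'$ which, against the integrable weight $(t_{n-1+\sigma}-s)^{-\alpha_r}$, contributes $\mathcal O(\tau^{3-\alpha_r})$; the quadratic interpolant on each interior subinterval has an $\mathcal O(\tau^{3}\|u'''\|_\infty)$ remainder, and summing the weighted local errors telescopes to $\mathcal O(\tau^{3-\alpha_r})$ as well. Taking the worst case over $r$ and using $\alpha_m\ge\alpha_r$ gives the stated global rate, the positive weights $\lambda_r$ entering only as bounded multiplicative factors.

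The main obstacle is the combinatorial bookkeeping in the re-indexing step: one must carefully distinguish the forward quadratic used near $k=0$ from the backward quadratic used elsewhere, and verify that the collected $b$-terms collapse exactly into the prescribed three-case expression with the correct endpoint coefficients. Since this construction is the multi-term extension of the single-term L2-1$\sigma$-type scheme already carried out in Lemma 2.1 of \cite{gaotemporal}, the analysis for each single derivative can be borrowed verbatim, and the multi-term statement is then obtained purely by linearity.
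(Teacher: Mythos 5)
The paper itself gives no proof of this lemma; it simply defers to Lemma 2.1 of \cite{gaotemporal}, so your outline is being compared against that construction. Your overall machinery (split the Caputo integral into the final fractional piece plus full subintervals, interpolate linearly on the former and quadratically on the latter, integrate the kernel explicitly, telescope the second-difference contributions, bound the remainders) is indeed the right one. However, your proposal has a critical gap in the error analysis: you claim each single-term approximation ${}^C_0D_t^{\alpha_r}u(t_{n-1+\sigma})$ is already accurate to $\mathcal O(\tau^{3-\alpha_r})$ and that the multi-term statement then follows ``purely by linearity.'' That is false for a generic $\sigma$. The linear interpolation on $[t_{n-1},t_{n-1+\sigma}]$ leaves a leading residual $u''(t_{n-1/2})(s-t_{n-1/2})+\mathcal O(\tau^2)$ in $u'$, and integrating $(s-t_{n-1/2})$ against $(t_{n-1+\sigma}-s)^{-\alpha_r}/\Gamma(1-\alpha_r)$ over that piece yields exactly
\begin{equation*}
\frac{\sigma^{1-\alpha_r}\bigl[\sigma-(1-\tfrac{\alpha_r}{2})\bigr]}{\Gamma(3-\alpha_r)}\,\tau^{2-\alpha_r}\,u''(t_{n-1/2}),
\end{equation*}
which is only $\mathcal O(\tau^{2-\alpha_r})$. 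The $\mathcal O(\tau^{3-\alpha_m})$ rate of the lemma holds precisely because the $\lambda_r$-weighted sum of these leading terms equals $u''(t_{n-1/2})\,F(\sigma)$ and $\sigma=\sigma^*$ is chosen in Lemma \ref{lemma2.2} as the root of $F(\sigma)=0$. Your proof never invokes this cancellation, so the term-by-term reduction you propose cannot deliver the stated order.

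A secondary but concrete error is the stencil on the interior subintervals. To reproduce the stated coefficients one must use, on each full interval $[t_{j-1},t_j]$ ($1\le j\le n-1$), the quadratic through $\{t_{j-1},t_j,t_{j+1}\}$ — i.e.\ the interval's endpoints plus the \emph{next} node, with no switching anywhere; in particular the last full interval $[t_{n-2},t_{n-1}]$ reaches forward to the unknown $t_n$, and that is the sole source of the $+b_1^{(\alpha_r)}$ in $c_0^{(n,\alpha_r)}=a_0^{(\alpha_r)}+b_1^{(\alpha_r)}$. Your backward triple $\{t_{k-1},t_k,t_{k+1}\}$ on $[t_k,t_{k+1}]$, with a special forward triple only on $[t_0,t_1]$, is a different quadratic on each integration cell: it never involves $u(t_n)$ in the quadratic corrections, so after collecting terms you would get $c_0^{(n,\alpha_r)}=a_0^{(\alpha_r)}$ and a shifted telescoping pattern, not the three-case formula of the lemma. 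Your attribution of the $+b_1^{(\alpha_r)}$ at $k=0$ to the special stencil near $t_0$ also conflates the two ends: $k=0$ indexes the newest difference $u(t_n)-u(t_{n-1})$, while the $[t_0,t_1]$ interval contributes to $k=n-1$.
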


\begin{proof}
For a rigorous proof of this lemma, the reader is referred to \cite{gaotemporal}.
\end{proof}

\begin{lemma} \cite{ye2014numerical} Suppose that $u(x)\in C^5[0,L]$ satisfy the boundary condition $u(0)=u(L)=0$. The fractional centred difference formula for approximating the Riesz derivatives when $1<\beta\leq2$ is as follows:
\begin{equation*}
\frac{\partial^{\beta}u(x_i)}{\partial| x|^{\beta}}=
-h^{-\beta}\sum\limits_{k=i-M}^ig_k^{(\beta)}u(x_{i-k})+\mathcal{O}(h^2),
\end{equation*}
where
\begin{equation*}
g_k^{(\beta)}=\frac{(-1)^k\Gamma(\beta+1)}{\Gamma(\beta/2-k+1)\Gamma(\beta/2+k+1)}.
\end{equation*}
\label{lemma2.4}
\end{lemma}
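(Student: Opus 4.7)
My plan is to adapt the standard Fourier-analytic proof of the fractional centered difference formula for Riesz derivatives. First, I would extend $u$ by zero to all of $\mathbb{R}$, calling the extension $\tilde u$. The boundary conditions $u(0)=u(L)=0$ make $\tilde u$ continuous, and $u(x_{i-k})=0$ automatically whenever $i-k\notin\{0,1,\ldots,M\}$, so the finite sum in the claim coincides with the doubly-infinite sum $\sum_{k\in\mathbb{Z}} g_k^{(\beta)}\tilde u(x_i-kh)$. It therefore suffices to establish the approximation on the full line.

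Next, I would invoke the generating-function identity
\[
\sum_{k=-\infty}^{\infty} g_k^{(\beta)} e^{-ik\theta} = \bigl(2\sin\tfrac{\theta}{2}\bigr)^{\beta},
\]
a direct consequence of the binomial expansion applied to $(1-e^{-i\theta})^{\beta/2}(1-e^{i\theta})^{\beta/2}$ together with the Gamma reflection identity; the symmetry $g_k^{(\beta)}=g_{-k}^{(\beta)}$ makes the sum real and even in $\theta$. Combined with the Fourier-multiplier characterization $\widehat{\partial^\beta\tilde u/\partial|x|^\beta}(\xi) = -|\xi|^\beta \hat{\tilde u}(\xi)$, this reduces the consistency error at $x=x_i$ to the symbol difference
\[
\Delta(h,\xi) := \Bigl(\tfrac{2\sin(h\xi/2)}{h}\Bigr)^{\beta} - |\xi|^{\beta} = -\tfrac{\beta}{24}\,h^{2}|\xi|^{\beta+2} + \mathcal{O}\bigl(h^{4}|\xi|^{\beta+4}\bigr),
\]
paired against $\hat{\tilde u}(\xi)$, which is visibly $\mathcal{O}(h^{2})$ at the symbol level.

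The main obstacle is making the last step rigorous: the zero extension $\tilde u$ is only $C^{0}$ at the endpoints even when $u\in C^{5}[0,L]$, so $\hat{\tilde u}$ decays only like $|\xi|^{-1}$ and the naive Plancherel estimate against $|\xi|^{\beta+2}\hat{\tilde u}(\xi)$ need not converge. I would therefore bypass the global Fourier route and argue pointwise: use the $C^{5}$ hypothesis to Taylor-expand $u(x_i-kh)$ to fourth order about $x_i$, apply the symmetry $g_k^{(\beta)}=g_{-k}^{(\beta)}$ to cancel odd-power contributions, and then match the even moments $h^{-\beta}\sum_{k} k^{2j} g_k^{(\beta)}$ against the Taylor coefficients of $(2\sin(\theta/2))^{\beta}$ at $\theta=0$. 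The zeroth even-moment match reproduces the Riesz derivative exactly, the next term produces the $\mathcal{O}(h^{2})u^{(4)}(x_i)$-type residual, and boundary-adjacent indices (near $k=i-M$ and $k=i$) are controlled using $u(0)=u(L)=0$ so that their contribution remains of the same order. This moment-matching/Taylor approach sidesteps the boundary-regularity pitfall while preserving the second-order rate promised by the Fourier heuristic.
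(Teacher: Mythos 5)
The paper does not actually prove this lemma; it is quoted verbatim from Ye et al.\ \cite{ye2014numerical}, whose accuracy statement ultimately rests on the Fourier-transform argument of \c{C}elik and Duman for the fractional centred difference on the whole line. So there is no in-paper proof to match against, and the substance of the review is whether your blind argument would actually close the gap. It would not.

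Your diagnosis of the difficulty is good: the zero extension $\tilde u$ is only continuous at the endpoints, $\hat{\tilde u}$ decays too slowly, and the Plancherel pairing against $|\xi|^{\beta+2}$ diverges. The problem is that your proposed repair --- Taylor-expand $u(x_i-kh)$ to fourth order and match the even moments $h^{-\beta}\sum_k k^{2j} g_k^{(\beta)}$ against the Taylor coefficients of the symbol --- cannot work for $1<\beta<2$. The coefficients satisfy $g_k^{(\beta)}=\mathcal{O}(|k|^{-1-\beta})$ (by the reflection formula applied to $\Gamma(\beta/2-k+1)$), so $\sum_k k^{2j}\,|g_k^{(\beta)}|$ diverges for every $j\geq 1$: the second and fourth moments you need simply do not exist. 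This is the discrete shadow of the fact that the symbol $\bigl|2\sin(\theta/2)\bigr|^{\beta}$ is not smooth at $\theta=0$, so it has no Taylor expansion there to match against. Moreover the ``zeroth even-moment match reproduces the Riesz derivative'' step is wrong: $\sum_k g_k^{(\beta)}=0$, so the zeroth moment contributes nothing, and no finite collection of Taylor coefficients of $u$ at $x_i$ can reproduce the nonlocal operator $\partial^{\beta}/\partial|x|^{\beta}$. The only known rigorous route is the Fourier one you abandoned: one shows $\bigl|\,|2\sin(h\xi/2)/h|^{\beta}-|\xi|^{\beta}\bigr|\leq C h^2|\xi|^{\beta+2}$ uniformly and pairs it with $\hat u$, which requires $\int|\xi|^{\beta+2}|\hat u(\xi)|\,d\xi<\infty$; for the zero extension this forces additional vanishing of derivatives of $u$ at $x=0,L$ beyond the stated $u(0)=u(L)=0$. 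That hypothesis is silently assumed in \cite{ye2014numerical} and in the present paper, and your write-up, as it stands, neither supplies it nor replaces it with a working pointwise argument.
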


Assume that $u(x,t)\in C^{(5,3)}([0,L]\times[0,T])$ is a solution to the problem \eqref{2.1}-\eqref{2.3}. According to equation \eqref{2.5} at $(x_i,t_{n-1+\sigma})$, we get
\begin{align}
\nonumber
&\sum\limits_{r=0}^{m}\lambda_r{}^C_0D_t^{\alpha_r}u(x_i,t_{n-1+\sigma})\\
=&K\frac{\partial^{\beta}u(x_i,t_{n-1+\sigma})}{\partial|
x|^{\beta}}+f(x_i,t_{n-1+\sigma}),\quad 1\leq i\leq M-1,~1\leq n\leq N.\label{2.8}
\end{align}
For simplicity, we define
\begin{align*}
&U_i^n=u(x_i,t_n),\quad0\leq i\leq M,~0\leq n\leq N;\\
&f_i^{n-1+\sigma}=f(x_i,t_{n-1+\sigma}),\quad0\leq i\leq M,~1\leq n\leq N.
\end{align*}
Using Lemma \ref{lemma2.3}, we have
\begin{align}
\sum\limits_{r=0}^{m}\lambda_r{}^C_0D_t^{\alpha_r}u(x_i,t_{n-1+\sigma})
=\sum\limits_{k=0}^{n-1}\hat{c}_k^{(n)}\left(U_i^{n-k}-U_i^{n-k-1}\right)+\mathcal{O}(\tau^{3-\alpha_m}).\label{2.9}
\end{align}
By applying the second-order linear interpolation formula to the Riesz derivative on the right side of equation \eqref{2.8}, we obtain that
\begin{equation}
\frac{\partial^{\beta}u(x_i,t_{n-1+\sigma})}{\partial| x|^{\beta}}=
\sigma\frac{\partial^{\beta}u(x_i,t_n)}{\partial|
x|^{\beta}}+(1-\sigma)\frac{\partial^{\beta}u(x_i,t_{n-1})}{\partial|
x|^{\beta}}+\mathcal{O}(\tau^{2}).\label{2.10}
\end{equation}
Furthermore, based on Lemma \ref{lemma2.4}, we have
\begin{equation}
\frac{\partial^{\beta}u(x_i,t_n)}{\partial| x|^{\beta}}=
-h^{-\beta}\sum\limits_{k=i-M}^ig_k^{(\beta)}U_{i-k}^n+\mathcal{O}(h^2).\label{2.11}
\end{equation}
Combine formulae \eqref{2.10} and \eqref{2.11}, and we get
\begin{equation}
\frac{\partial^{\beta}u(x_i,t_{n-1+\sigma})}{\partial| x|^{\beta}}=
-h^{-\beta}\sum\limits_{k=i-M}^ig_k^{(\beta)}\left[\sigma U_{i-k}^n+(1-\sigma)U_{i-k}^{n-1}\right]+\mathcal{O}(h^2+\tau^2).\label{2.12}
\end{equation}
By substituting \eqref{2.9} and \eqref{2.12} into \eqref{2.8}, we obtain
\begin{align}\label{2.13}
\nonumber
\sum\limits_{k=0}^{n-1}\hat{c}_k^{(n)}\left(U_i^{n-k}-U_i^{n-k-1}\right)
=&-Kh^{-\beta}\sum\limits_{k=i-M}^ig_k^{(\beta)}\left[\sigma U_{i-k}^n+(1-\sigma)U_{i-k}^{n-1}\right]\\
&+f_i^{n-1+\sigma}+R_i^n,1\leq i\leq M-1,~1\leq n\leq N,
\end{align}
where there exists a positive constant $c_1$ such that
\begin{align}
\mid R_i^n\mid\leq c_1\left(h^2+\tau^2+\Delta\alpha^2\right),\quad 1\leq i\leq M-1,~1\leq n\leq N.\label{2.14}
\end{align}

Notice the initial-boundary conditions \eqref{2.6}-\eqref{2.7}. We have
\begin{align}
&U_0^n=0,\quad U_M^n=0,\quad 0\leq n\leq N,\label{2.15}\\
&U_i^0=\phi(x_i),\quad 1\leq i\leq M-1.\label{2.16}
\end{align}

Suppose $u_i^k$ is the numerical approximation to $u(x_i,t_k)$. By omitting the local truncation error term $R_i^n$ in \eqref{2.13} and replacing the exact solution $U_i^n$ with $u_i^k$ in \eqref{2.13}, \eqref{2.15}-\eqref{2.16}, we can construct the following difference scheme for the \eqref{2.1}-\eqref{2.3}:
\begin{align}
\nonumber
&\sum\limits_{k=0}^{n-1}\hat{c}_k^{(n)}\left(u_i^{n-k}-u_i^{n-k-1}\right)
=-Kh^{-\beta}\sum\limits_{k=i-M}^ig_k^{(\beta)}\left[\sigma u_{i-k}^n+(1-\sigma)u_{i-k}^{n-1}\right]+f_i^{n-1+\sigma},\\
&\qquad\qquad\qquad\qquad\qquad\qquad 1\leq i\leq M-1,~1\leq n\leq N,\label{2.17}\\
&u_0^n=0,\quad u_M^n=0,\quad 0\leq n\leq N,\label{2.18}\\
&u_i^0=\phi(x_i),\quad 1\leq i\leq M-1.\label{2.19}
\end{align}

\subsection{Solvability, stability and convergence analysis}\label{section2.2}
 \quad In this subsection, we analyze the unique solvability, unconditional stability and convergence of the difference scheme \eqref{2.17}-\eqref{2.19} obtained in Section \ref{section2.1}. Meanwhile, we show that the convergence accuracy for the proposed difference scheme is second-order in space, in time and in distributed-order integral in the mesh $L_2$-norm.

We define
\begin{align*}
V_h=\{v~|~v=(v_0,v_1,\cdots,v_{M-1},v_M)^T,~v_0=0,~v_M=0\}.
\end{align*}
For all $v,~w\in V_h$, the discrete inner product and the corresponding discrete $L_2$-norm are defined as follows:
\begin{align*}
(v,w)=h\sum\limits_{i=1}^{M-1}v_iw_i,\quad and \quad \|v\|=\sqrt{(v,v)}.
\end{align*}

Before introducing the properties on the solvability, unconditional stability and convergence, several useful lemmas are prepared below.

\begin{lemma} \cite{ye2014numerical} Let $1<\beta\leq2$ and take $g_k^{(\beta)}$ as defined in Lemma \ref{lemma2.4}. We have
\begin{equation*}
\begin{cases}
g_0^{(\beta)}=\frac{\Gamma(\beta+1)}{\Gamma^2(\beta/2+1)}\geq0,\quad g_{-k}^{(\beta)}=g_k^{(\beta)}\leq0,\quad k=1,2,\cdots,\\
\sum\limits_{k=-\infty}^{\infty}g_k^{(\beta)}=0,\quad -\sum\limits_{k=-M+i\atop k\neq0}^{i}g_k^{(\beta)}\leq g_0^{(\beta)},\quad 1\leq i\leq M-1,\\
 g_k^{(\beta)}=\left(1-\frac{\beta+1}{\beta/2+k}\right)g_{k-1}^{(\beta)},\quad k\geq1.
\end{cases}
\end{equation*}
\label{lemma2.5}
\end{lemma}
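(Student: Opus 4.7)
The plan is to verify the five assertions directly from the explicit formula $g_k^{(\beta)} = \frac{(-1)^k\Gamma(\beta+1)}{\Gamma(\beta/2-k+1)\Gamma(\beta/2+k+1)}$ rather than to invoke any external result. Assertion (1) is an immediate substitution of $k=0$, combined with the positivity of $\Gamma$ on positive reals (both $\beta/2+1$ and $\beta+1$ lie in $(3/2,3]$ for $\beta\in(1,2]$). The symmetry $g_{-k}^{(\beta)}=g_k^{(\beta)}$ in assertion (2) is visible from the formula, since swapping $k\mapsto-k$ merely permutes the two gamma factors in the denominator while $(-1)^k=(-1)^{-k}$.

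My next step would be to establish the recurrence in (5), because it drives the sign argument in (2). The ratio
$$\frac{g_k^{(\beta)}}{g_{k-1}^{(\beta)}}=-\frac{\Gamma(\beta/2-k+2)\,\Gamma(\beta/2+k)}{\Gamma(\beta/2-k+1)\,\Gamma(\beta/2+k+1)}$$
simplifies by $\Gamma(z+1)=z\,\Gamma(z)$ applied once in the numerator of the first factor and once in the denominator of the second, giving $-(\beta/2-k+1)/(\beta/2+k)=1-(\beta+1)/(\beta/2+k)$, which is exactly (5). With the recurrence in hand, (2) follows by induction on $k\geq 1$: the multiplier $1-(\beta+1)/(\beta/2+k)$ is strictly negative at $k=1$ (there $\beta/2+1<\beta+1$) and satisfies $1-(\beta+1)/(\beta/2+k)=(k-1-\beta/2)/(\beta/2+k)\geq 0$ for every $k\geq 2$ since $\beta\leq 2$. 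Starting from $g_0^{(\beta)}\geq 0$, we therefore obtain $g_1^{(\beta)}\leq 0$ and then $g_k^{(\beta)}\leq 0$ for all subsequent $k$.

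For (3) I would use the generating function representation of the fractional centred difference coefficients, namely $(2\sin(\theta/2))^\beta=\sum_{k\in\mathbb{Z}} g_k^{(\beta)}\, e^{ik\theta}$, obtained by expanding $\bigl((1-e^{i\theta})(1-e^{-i\theta})\bigr)^{\beta/2}$ via the binomial series and collecting terms against the gamma representation of $g_k^{(\beta)}$. Evaluating both sides at $\theta=0$ yields $\sum_k g_k^{(\beta)}=0$. Assertion (4) is then a direct consequence: using (2) each term $-g_k^{(\beta)}$ with $k\neq 0$ is non-negative, so any finite truncation is bounded above by the full tail, and using (3) the full tail equals $g_0^{(\beta)}$.

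The main obstacle is the generating-function identity underlying (3); the other items are algebraic consequences of the gamma formula. I would either cite the well-known identity for centred fractional differences from the reference \cite{ye2014numerical} or verify it by expanding $(1-e^{i\theta})^{\beta/2}$ through the generalized binomial theorem, then pairing it with $(1-e^{-i\theta})^{\beta/2}$ and matching the resulting Cauchy product coefficients against $g_k^{(\beta)}$ — the match is straightforward once one uses the reflection $\Gamma(\beta/2-k+1)\Gamma(k-\beta/2)=\pi/\sin(\pi(\beta/2-k))$ to rewrite the binomial coefficients in gamma form.
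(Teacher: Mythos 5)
Your argument is correct, but note that the paper itself offers no proof of this lemma at all: it is stated as a cited result from the reference on the time distributed-order Riesz space problem, and the properties are simply imported. Your proposal therefore supplies something the paper deliberately outsources. The algebraic parts are sound: item (5) follows exactly as you compute from $\Gamma(z+1)=z\Gamma(z)$, and your sign analysis via the recurrence is the right move, since arguing signs directly from the closed formula would require tracking the alternating sign of $\Gamma$ at negative arguments ($\beta/2-k+1<0$ for $k\geq2$); the recurrence sidesteps this cleanly, and the boundary case $\beta=2$ (where $1/\Gamma(\beta/2-k+1)$ vanishes for $k\geq2$, so $g_k^{(2)}=0$) is handled automatically. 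The one place where your proof still leans on external content is item (3): the symbol identity $\bigl|2\sin(\theta/2)\bigr|^{\beta}=\sum_{k\in\mathbb{Z}}g_k^{(\beta)}e^{ik\theta}$ is precisely the generating-function characterization of the fractional centred difference that the cited reference establishes, so "verify it by the binomial Cauchy product" is really a restatement of that reference's derivation rather than an independent shortcut; if you carry it out, you should also note that $g_k^{(\beta)}=O(k^{-1-\beta})$ (by Stirling applied to the Gamma ratio) so the bilateral series converges absolutely and evaluation at $\theta=0$ is legitimate. Granting (3), your deduction of (4) from the sign information in (2) is exactly right. In short: a correct, essentially self-contained proof of a lemma the paper only cites, with the single genuinely nontrivial ingredient being the same symbol identity the citation is there to provide.
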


\begin{lemma} \cite{gaotemporal} Let
$\hat{c}_k^{(n)}=\sum\limits_{r=0}^{m}\lambda_r\frac{\tau^{-\alpha_r}}{\Gamma(2-\alpha_r)}
c_k^{(n,\alpha_r)},~k=0,1,\cdots,n-1,$ as is defined in Lemma \ref{lemma2.3}, it holds
\begin{equation*}
\hat{c}_0^{(n)}>\hat{c}_1^{(n)}>\cdots>\hat{c}_{n-2}^{(n)}>\hat{c}_{n-1}^{(n)}>
\sum\limits_{r=0}^{m}\lambda_r\frac{\tau^{-\alpha_r}}{\Gamma(2-\alpha_r)}\cdot\frac{1-\alpha_r}{2}
(n-1+\sigma)^{-\alpha_r}.
\end{equation*}
\label{lemma2.6}
\end{lemma}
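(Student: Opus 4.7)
The plan is to exploit the decomposition
$\hat{c}_k^{(n)}=\sum_{r=0}^{m}\mu_r\,c_k^{(n,\alpha_r)}$,
where $\mu_r:=\lambda_r\tau^{-\alpha_r}/\Gamma(2-\alpha_r)>0$. Since the weights $\mu_r$ are strictly positive, the monotonicity and lower bound for $\hat{c}_k^{(n)}$ will be obtained by establishing corresponding inequalities at the single-index level (supplemented, where necessary, by the defining equation $F(\sigma)=0$ from Lemma~\ref{lemma2.2}), and then summing against the $\mu_r$'s.

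The first step is to rewrite, for fixed $\alpha\in(0,1)$ and $2\le k\le n-2$, the consecutive difference in terms of $a$'s and $b$'s:
$$c_{k-1}^{(n,\alpha)}-c_k^{(n,\alpha)}=\bigl(a_{k-1}^{(\alpha)}-a_k^{(\alpha)}\bigr)-\bigl(b_{k-1}^{(\alpha)}-2b_k^{(\alpha)}+b_{k+1}^{(\alpha)}\bigr),$$
with slightly modified boundary expressions at $k=1$ and $k=n-1$. I would then recognize each $b_l^{(\alpha)}$ as the trapezoidal quadrature error on $[l-1+\sigma,\,l+\sigma]$ for the strictly concave function $t\mapsto t^{1-\alpha}$, which via the Peano-kernel representation equals a positive integral weighted by the (negative) second derivative of $t^{1-\alpha}$. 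Coupled with Taylor's theorem applied to $a_{k-1}^{(\alpha)}-a_k^{(\alpha)}$, these identities yield a signable closed form for $c_{k-1}^{(n,\alpha)}-c_k^{(n,\alpha)}$ whose positivity follows from the strict concavity of $t^{1-\alpha}$.

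The lower bound $c_{n-1}^{(n,\alpha)}>\tfrac{1-\alpha}{2}(n-1+\sigma)^{-\alpha}$ is handled analogously by Taylor-expanding the defining formula $c_{n-1}^{(n,\alpha)}=a_{n-1}^{(\alpha)}-b_{n-1}^{(\alpha)}$ around $(n-1+\sigma)$. The degenerate cases $\alpha=0$ and $\alpha=1$ are then treated directly using the closed forms given in Lemma~\ref{lemma2.3}; in particular $\alpha=1$ contributes non-negatively to every difference $\hat{c}_k^{(n)}-\hat{c}_{k+1}^{(n)}$ and $\alpha=0$ contributes nontrivially (and negatively) only at $k=0$, while both easily satisfy the lower bound pointwise.

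The main obstacle is the index $k=0$: the $\alpha=0$ term contributes $-\lambda_0(1-\sigma)\le 0$ to $\hat{c}_0^{(n)}-\hat{c}_1^{(n)}$, and for intermediate $\alpha\in(0,1)$ the sign of $c_0^{(n,\alpha)}-c_1^{(n,\alpha)}$ is sensitive to how $\sigma$ compares with the single-term optimal value $1-\alpha/2$. Following Gao et al.~\cite{gaotemporal}, the resolution is that the choice $F(\sigma)=0$ from Lemma~\ref{lemma2.2} was designed precisely to balance these multi-term contributions at leading order, so that after summing the closed-form expressions against the weights $\mu_r$ and substituting $F(\sigma)=0$, the negative $\alpha=0$ contribution is cancelled and strict positivity of $\hat{c}_0^{(n)}-\hat{c}_1^{(n)}$ emerges. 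The hypothesis that at least one $\alpha_r\in(0,1)$, combined with the positivity of $\mu_r$, then supplies the final strict inequalities throughout the chain.
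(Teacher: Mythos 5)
The paper does not actually prove this lemma: it is imported verbatim from Gao--Alikhanov--Sun \cite{gaotemporal}, so the only meaningful comparison is with the proof in that reference. Your sketch reproduces its architecture correctly -- strictly positive weights $\mu_r$, the identification of $b_l^{(\alpha)}$ as a trapezoid-rule error for the concave function $t\mapsto t^{1-\alpha}$ (hence $b_l^{(\alpha)}>0$), term-by-term treatment of the interior differences and of the tail bound $c_{n-1}^{(n,\alpha)}>\frac{1-\alpha}{2}(n-1+\sigma)^{-\alpha}$, and the recognition that $\hat{c}_0^{(n)}>\hat{c}_1^{(n)}$ cannot be proved per $\alpha$ (indeed $c_0^{(n,\alpha)}<c_1^{(n,\alpha)}$ already for, say, $\alpha=0.1$, $\sigma=1/2$) and must invoke $F(\sigma)=0$ from Lemma \ref{lemma2.2}. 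That diagnosis of where the difficulty sits is exactly right.

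However, two load-bearing steps are asserted rather than argued, and the justification you give for the first one would not go through as stated. For $2\le k\le n-2$ you write $c_{k-1}^{(n,\alpha)}-c_k^{(n,\alpha)}=(a_{k-1}^{(\alpha)}-a_k^{(\alpha)})-(b_{k-1}^{(\alpha)}-2b_k^{(\alpha)}+b_{k+1}^{(\alpha)})$ and claim positivity ``follows from the strict concavity of $t^{1-\alpha}$.'' But \emph{both} bracketed quantities are strictly positive: the first because $\psi(t)=t^{1-\alpha}$ is concave, the second because the Peano-kernel form $b_l^{(\alpha)}=-\int_0^1\frac{s(1-s)}{2}\psi''(l-1+\sigma+s)\,ds$ makes the second difference in $l$ an integral of $-\psi''''>0$ (note $\psi''''<0$ here). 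So the difference of two positive terms is not signable by concavity alone; crude sup/inf bounds on the competing integrals fail badly near $k=2$, $\sigma=1/2$, $\alpha$ close to $1$, and the careful nested mean-value comparison that closes this step is the bulk of the work in \cite{gaotemporal} (and in Alikhanov's single-term antecedent). Second, for $k=0$ the statement that ``$F(\sigma)=0$ was designed precisely to balance these contributions'' describes the conclusion, not a proof: what is actually needed is a per-$\alpha$ lower bound for $\frac{\tau^{-\alpha}}{\Gamma(2-\alpha)}\bigl(c_0^{(n,\alpha)}-c_1^{(n,\alpha)}\bigr)$ of the form (positive factor)$\times\frac{\sigma^{1-\alpha}}{\Gamma(3-\alpha)}\bigl[\sigma-(1-\tfrac{\alpha}{2})\bigr]\tau^{-\alpha}$ plus a positive remainder, so that summing against $\lambda_r$ collapses the first part onto a multiple of $F(\sigma)=0$; that inequality is never derived in your sketch. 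As a reconstruction of the cited proof's plan the proposal is sound, but as a proof it is incomplete at precisely the two places where the real content lies.
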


\begin{lemma} \cite{alikhanov2015new} Let $V$ represent the inner product space and $(\cdot,\cdot)$ denote the inner product with the induced norm $\|\cdot\|$. For $v^0,~ v^1,\cdots,~v^n\in V$, when $n\geq1$ we have
\begin{equation*}
\sum\limits_{k=0}^{n-1}\hat{c}_k^{(n)}\left(v^{n-k}-v^{n-k-1},~\sigma v^n+(1-\sigma)v^{n-1}\right)
\geq\frac{1}{2}\sum\limits_{k=0}^{n-1}\hat{c}_k^{(n)}\left(\|v^{n-k}\|^2-\|v^{n-k-1}\|^2\right).
\end{equation*}
\label{lemma2.7}
\end{lemma}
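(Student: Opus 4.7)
The plan is to reduce to the scalar case by orthogonal decomposition in $V$, recast the inequality as a quadratic form in the increments $\delta_s = v^s - v^{s-1}$, and then invoke the algebraic argument of Alikhanov \cite{alikhanov2015new} adapted to the multi-term coefficients at hand.

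First, since each side of the inequality is bilinear in the inner product on $V$, choosing an orthonormal basis $\{e_\ell\}$ of the linear span of $\{v^0,\ldots,v^n\}$ and writing $v^s=\sum_\ell v^s_\ell e_\ell$ causes the inequality to split as $\sum_\ell$ of the same statement for the scalars $v^s_\ell \in \mathbb{R}$. Hence it suffices to prove the scalar case. From the elementary identity
\[
2(v^s - v^{s-1})w - \bigl((v^s)^2 - (v^{s-1})^2\bigr) = (v^s - v^{s-1})(2w - v^s - v^{s-1}),
\]
with $w = \sigma v^n + (1-\sigma)v^{n-1}$, the claim is equivalent to the non-negativity of
\[
Q := \sum_{k=0}^{n-1}\hat{c}_k^{(n)}(v^{n-k} - v^{n-k-1})(2w - v^{n-k} - v^{n-k-1}).
\]

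The $k=0$ term collapses to $\hat{c}_0^{(n)}(2\sigma-1)\delta_n^2 \geq 0$ because $\sigma \geq 1/2$ by Lemma~\ref{lemma2.2}. For $k \geq 1$ one substitutes $2w - v^{n-k} - v^{n-k-1} = 2\sigma\delta_n + \delta_{n-k} + 2\sum_{j=n-k+1}^{n-1}\delta_j$, exhibiting $Q$ as a quadratic form in $\delta_1,\ldots,\delta_n$ whose diagonal entries are proportional to $\hat{c}_k^{(n)}$ and whose off-diagonal entries couple $\delta_{n-k}$ with later increments $\delta_j$ ($j>n-k$) and with $\delta_n$. I would then complete the square pairwise, absorbing each indefinite cross-term $2\hat{c}_k^{(n)}\delta_{n-k}\delta_j$ into a square of the form $\hat{c}_?^{(n)}(\delta_{n-k}+\delta_j)^2$ that consumes parts of both diagonals. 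The success of this closure rests on ratio estimates between consecutive $\hat{c}_k^{(n)}$'s, strictly stronger than the monotonicity of Lemma~\ref{lemma2.6}, and derived from the explicit formulas for $a_l^{\alpha_r}$ and $b_l^{\alpha_r}$ in Lemma~\ref{lemma2.3}.

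The main obstacle is precisely these ratio bounds. Although the single-exponent analogue (where $\hat{c}_k^{(n)}$ is replaced by one $c_k^{(n,\alpha_r)}$ with $\sigma = 1 - \alpha_r/2$) is established in \cite{alikhanov2015new}, here $\sigma$ is a single value determined by $F(\sigma)=0$ from Lemma~\ref{lemma2.2} and generally differs from $1-\alpha_r/2$ for any particular $r$. Consequently the individual summands built from a single $c_k^{(n,\alpha_r)}$ need not form PSD quadratic forms on their own; the equation $F(\sigma)=0$ is tailored so that the $\lambda_r\tau^{-\alpha_r}/\Gamma(2-\alpha_r)$-weighted combination \emph{is} PSD. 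Verifying this cancellation is the delicate algebraic heart of the proof, which I would carry out by estimating the $a_l^{\alpha_r}$ and $b_l^{\alpha_r}$ uniformly in $r$ and invoking the balance condition $F(\sigma)=0$ to close the ratio inequalities needed in the square completion of Step~3.
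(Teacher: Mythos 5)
There is a genuine gap, and it sits exactly where you flagged your ``main obstacle.'' Your set-up is correct --- the componentwise reduction, the identity converting the claim into $Q\ge 0$, the evaluation of the $k=0$ term using $\sigma\ge\tfrac12$, and the expansion of $2w-v^{n-k}-v^{n-k-1}$ in the increments are all right --- but the positivity of $Q$ \emph{is} the lemma, and the mechanism you propose for it would fail. In the raw increments $\delta_1,\dots,\delta_n$ the quadratic form is far from diagonally dominant: the row of $\delta_s$ has diagonal entry of size $\hat{c}_{n-s}^{(n)}$ but $n-s$ off-diagonal entries each also of size $\hat{c}_{n-s}^{(n)}$, so pairwise absorption of cross-terms into squares of the form $(\delta_{n-k}+\delta_j)^2$ necessarily overspends the diagonal, and no system of ratio bounds between consecutive $\hat{c}_k^{(n)}$ can repair a pairwise scheme. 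The decomposition that works is cumulative rather than pairwise: set $\mu_k=\sigma v^n+(1-\sigma)v^{n-1}-v^{n-k}$, so that $v^{n-k}-v^{n-k-1}=\mu_{k+1}-\mu_k$ and $2w-v^{n-k}-v^{n-k-1}=\mu_{k+1}+\mu_k$; then $Q=\sum_{k=0}^{n-1}\hat{c}_k^{(n)}\left(\|\mu_{k+1}\|^2-\|\mu_k\|^2\right)$, which Abel-sums to $\hat{c}_{n-1}^{(n)}\|\mu_n\|^2+\sum_{j=1}^{n-1}(\hat{c}_{j-1}^{(n)}-\hat{c}_j^{(n)})\|\mu_j\|^2-\hat{c}_0^{(n)}\|\mu_0\|^2$. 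This also works directly in $V$, so the scalar reduction is unnecessary.

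Since $\mu_0=-(1-\sigma)(v^n-v^{n-1})$ and $\mu_1=\sigma(v^n-v^{n-1})$, the monotonicity already recorded in Lemma \ref{lemma2.6} reduces everything to the \emph{single} coefficient inequality $(2\sigma-1)\hat{c}_0^{(n)}\ge\sigma^2\hat{c}_1^{(n)}$ --- not a family of ratio estimates for all $k$. That one inequality is where $F(\sigma)=0$ enters (it degenerates to an equality in the limits $\alpha\to 0$ and $\alpha\to 1$, which is why plain monotonicity cannot suffice), and it is proved for the multi-term coefficients in \cite{gaotemporal}. Note also that the paper itself offers no proof of Lemma \ref{lemma2.7}; it is imported verbatim from \cite{alikhanov2015new}. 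So to make your argument complete you would need to (i) replace the pairwise square-completion by the telescoping change of variables above, and (ii) supply the inequality $(2\sigma-1)\hat{c}_0^{(n)}\ge\sigma^2\hat{c}_1^{(n)}$ from the defining equation of $\sigma$; as written, the decisive step is both unexecuted and aimed in the wrong direction.
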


\begin{lemma} \cite{sun2015finite} For $1<\beta\leq2$ and any $v\in V_h$, it holds that
\begin{equation*}
-h^{-\beta}h\sum\limits_{i=1}^{M-1}\left(\sum\limits_{k=i-M}^ig_k^{(\beta)}v_{i-k}\right)v_i
\leq-c_*^{(\beta)}(2L)^{-\beta}h\sum\limits_{i=1}^{M-1}v_i^2,
\end{equation*}
where $c_*^{(\beta)}=\frac{2}{\beta}r_\beta,$
with
$$r_\beta=e^{-2}\frac{(4-\beta)(2-\beta)\beta}{(6+\beta)(4+\beta)(2+\beta)}\cdot
\frac{\Gamma(\beta+1)}{\Gamma^2(\beta/2+1)}\left(3+\frac{\beta}{2}\right)^{\beta+1}.$$
\label{lemma2.8}
\end{lemma}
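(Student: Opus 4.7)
The plan is to reduce the stated estimate to an explicit lower bound on the smallest eigenvalue of the $(M-1)\times(M-1)$ symmetric Toeplitz matrix $G=(g_{i-j}^{(\beta)})$. Using $v_0=v_M=0$, the double sum on the left equals $-h^{1-\beta}\,v^{\mathsf T} G v$, so the claim is equivalent to the Poincar\'e-type estimate
$$v^{\mathsf T} G v \;\geq\; c_*^{(\beta)}(2M)^{-\beta}\sum_{i=1}^{M-1}v_i^{2},$$
since $h/L=1/M$. By Lemma \ref{lemma2.5}, the generating symbol of the doubly-infinite sequence $\{g_k^{(\beta)}\}$ is the nonnegative function $f(\theta)=(2\sin(\theta/2))^{\beta}$, so $G$ is positive definite; however, because $f(0)=0$, no uniform eigenvalue bound can come from $\inf_\theta f(\theta)$ alone, and the boundary condition encoded in $V_h$ must be used to rule out the lowest frequency.

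My first step would be to pad $v$ with zeros outside $\{1,\dots,M-1\}$ and apply Parseval's identity to write
$$v^{\mathsf T} G v \;=\; \frac{1}{2\pi}\int_{-\pi}^{\pi}(2\sin(\theta/2))^{\beta}|\widehat v(\theta)|^{2}\,d\theta.$$
Because $v$ vanishes at both endpoints and is supported on a window of length $M-1$, its discrete sine expansion only excites modes with frequencies $\theta\geq\theta_1=\pi/M$. This immediately gives the correct \emph{order} of the bound, namely $\lambda_{\min}(G)\gtrsim f(\pi/M)\asymp(\pi/M)^{\beta}$, and matches the $(2M)^{-\beta}$ appearing on the right-hand side up to a $\beta$-dependent constant. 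The remaining task is to extract the \emph{explicit} constant $c_*^{(\beta)}$.

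That explicit bookkeeping is where I expect the main difficulty to lie, and I would follow the strategy of \cite{sun2015finite}: rather than keeping the Parseval/Poincar\'e argument symbolic, one tests the Rayleigh quotient against carefully chosen sine-type vectors and splits the inner Toeplitz sum at a cutoff index of order $3+\beta/2$, then iterates the ratio recurrence $g_k^{(\beta)}=\bigl(1-\tfrac{\beta+1}{\beta/2+k}\bigr)g_{k-1}^{(\beta)}$ from Lemma \ref{lemma2.5} to telescope the tail. The seed of the recurrence, $g_0^{(\beta)}=\Gamma(\beta+1)/\Gamma^{2}(\beta/2+1)$, reappears inside $r_\beta$; the rational factor $\tfrac{(4-\beta)(2-\beta)\beta}{(6+\beta)(4+\beta)(2+\beta)}$ comes from evaluating the recurrence at the smallest indices against this seed; the $e^{-2}$ factor emerges from a telescoping product of the form $\prod_k(1-c/k)^{k}\geq e^{-2}$ applied to the iterated recurrence; and the power $(3+\beta/2)^{\beta+1}$ together with the factor $(2L)^{-\beta}$ tracks the loss incurred when converting the length-$M$ discrete norm into a norm on an interval of length $2L$, which is also what prevents the denominator from being the sharp $(\pi/L)^{\beta}$. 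Assembling these contributions and restoring the prefactor $-h^{1-\beta}$ then delivers the inequality as stated.
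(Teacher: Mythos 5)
The first thing to note is that the paper does not prove Lemma \ref{lemma2.8} at all: it is quoted verbatim from \cite{sun2015finite} with no argument given, so there is no in-paper proof to match your proposal against. Judged on its own merits, your write-up is a plan rather than a proof, and the plan has two concrete problems. First, the Parseval step is fine ($v^{\mathsf T}Gv=\frac{1}{2\pi}\int_{-\pi}^{\pi}\bigl(2\sin(\theta/2)\bigr)^{\beta}\lvert\widehat v(\theta)\rvert^{2}\,d\theta$ is correct for the fractional centred difference symbol), but the claim that a vector supported on $\{1,\dots,M-1\}$ with zero endpoint values ``only excites modes with frequencies $\theta\geq\pi/M$'' is false as stated: $\widehat v(0)=\sum_j v_j$ is generically nonzero, and the discrete sine basis diagonalizes the discrete Laplacian, not $G$ for $\beta<2$, so $\lambda_{\min}(G)\gtrsim f(\pi/M)$ does not follow from this observation. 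Second, ``testing the Rayleigh quotient against carefully chosen sine-type vectors'' can only produce \emph{upper} bounds on $\lambda_{\min}$; it cannot deliver the lower bound the lemma asserts.

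The missing idea is the elementary quadratic-form identity that the cited reference relies on. Padding $v$ by zeros and using $\sum_{k\in\mathbb{Z}}g_k^{(\beta)}=0$ from Lemma \ref{lemma2.5}, one has $v^{\mathsf T}Gv=\sum_{k\geq1}\bigl(-g_k^{(\beta)}\bigr)\sum_{i}(v_{i+k}-v_i)^2$, and for every shift $k\geq M$ the supports of $v_{i+k}$ and $v_i$ are disjoint, so $\sum_i(v_{i+k}-v_i)^2=2\sum_{i=1}^{M-1}v_i^2$ exactly. Hence $v^{\mathsf T}Gv\geq 2\bigl(\sum_{k\geq M}(-g_k^{(\beta)})\bigr)\lVert v\rVert_{\ell^2}^2$, and the whole constant is produced by the pointwise tail bound $-g_k^{(\beta)}\geq r_\beta(k+\beta/2)^{-(\beta+1)}$ (obtained by iterating the ratio recurrence from the seed $-g_3^{(\beta)}=g_0^{(\beta)}\frac{\beta(2-\beta)(4-\beta)}{(2+\beta)(4+\beta)(6+\beta)}$, with $e^{-2}$ bounding the remaining product) followed by $\sum_{k\geq M}(k+\beta/2)^{-(\beta+1)}\geq\frac{1}{\beta}(2M)^{-\beta}$; the factor $(2L)^{-\beta}$ is then just $h^{-\beta}(2M)^{-\beta}=(2Mh)^{-\beta}$, not a norm-conversion loss as you suggest. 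You correctly guessed several ingredients of $r_\beta$ (the recurrence, the seed $g_0^{(\beta)}$, the $e^{-2}$ from a telescoping product), but without the far-shift decoupling step the argument does not close, so as written the proposal does not establish the inequality.
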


First, let us consider the unique solvability of the proposed numerical method \eqref{2.17}-\eqref{2.19}.

\begin{theorem}
The difference scheme \eqref{2.17}-\eqref{2.19} is uniquely solvable.
\label{th2.1}
\end{theorem}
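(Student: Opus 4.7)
The plan is to proceed by induction on the time level $n$, reducing unique solvability at each step to the invertibility of one $(M-1)\times(M-1)$ linear system. Since $u^0$ is prescribed by the initial condition \eqref{2.19}, assume $u^0,u^1,\dots,u^{n-1}$ have been uniquely determined. I would collect on the left-hand side of \eqref{2.17} every term involving the unknown vector $u^n=(u_1^n,\dots,u_{M-1}^n)^T$: only the $k=0$ term of the time-fractional sum contributes $\hat{c}_0^{(n)}u_i^n$, and only the $\sigma$-weighted piece of the Riesz stencil survives, giving $K\sigma h^{-\beta}\sum_{k=i-M}^{i}g_k^{(\beta)}u_{i-k}^n$. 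After using $u_0^n=u_M^n=0$ to eliminate the endpoints, this becomes a square system $Au^n = b^{(n)}$ with $b^{(n)}$ assembled from $u^0,\dots,u^{n-1}$ and $f_i^{n-1+\sigma}$; unique solvability at level $n$ reduces to showing that $A$ is nonsingular.

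The matrix $A$ has diagonal entry $\hat{c}_0^{(n)}+K\sigma h^{-\beta}g_0^{(\beta)}$ at position $(i,i)$ and off-diagonal entry $K\sigma h^{-\beta}g_{i-j}^{(\beta)}$ at $(i,j)$ for $j\neq i$. From Lemma \ref{lemma2.5} I know that $g_0^{(\beta)}\ge 0$ and $g_k^{(\beta)}=g_{-k}^{(\beta)}\le 0$ for $k\neq 0$, so $A$ is real symmetric with positive diagonal and non-positive off-diagonals, and the same lemma supplies the row bound $-\sum_{k=i-M,\,k\neq 0}^{i}g_k^{(\beta)}\le g_0^{(\beta)}$, which controls the absolute off-diagonal row sum of $A$ by $K\sigma h^{-\beta}g_0^{(\beta)}$. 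Combining this with Lemma \ref{lemma2.6}, which guarantees $\hat{c}_0^{(n)}>0$, yields strict diagonal dominance, and invertibility of $A$ follows at once.

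This argument has no deep obstacle; it is essentially bookkeeping once Lemmas \ref{lemma2.5} and \ref{lemma2.6} are in hand. The one point that needs care is the index alignment: the Riesz sum runs over $k=i-M,\dots,i$, whereas the unknowns are indexed by $1\le i\le M-1$, so one must confirm that the boundary conditions $u_0^n=u_M^n=0$ exactly absorb the two extremal contributions, leaving a matrix of the expected size on which the diagonal-dominance bound from Lemma \ref{lemma2.5} applies with no leakage at the boundary.
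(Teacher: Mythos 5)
Your proposal is correct and follows essentially the same route as the paper: the paper also proceeds by induction on $n$ and shows the homogeneous system has only the trivial solution by evaluating at the index where $|u_{i}^n|$ is maximal and invoking the bound $-\sum_{k\neq 0}g_k^{(\beta)}\le g_0^{(\beta)}$ from Lemma \ref{lemma2.5} together with $\hat{c}_0^{(n)}>0$, which is precisely the strict-diagonal-dominance argument you describe (and which the paper states explicitly in matrix form in Lemma \ref{lemma2.9}).
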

\begin{proof}
Let $u^n=(u_0^n,u_1^n,u_2^n,\cdots,u_{M-1}^n,u_{M}^n)^T.$ According to \eqref{2.18} and \eqref{2.19}, the value of $u^0$ is determined. Now suppose that $\{u^k~|~0\leq k\leq n-1\}$ has been determined. According to \eqref{2.17} and \eqref{2.18}, we get a linear equation system with respect to $u^n$. Then we only need to prove that the corresponding homogeneous linear system
\begin{align}
&\hat{c}_0^{(n)}u_i^n
=-K\sigma h^{-\beta}\sum\limits_{k=i-M}^ig_k^{(\beta)}u_{i-k}^n,\quad 1\leq i\leq M-1,\label{2.20}\\
&u_0^n=0,\quad u_M^n=0\label{2.21}
\end{align}
only has solution of 0.

We first rewrite the equation \eqref{2.20} as follows:
\begin{align}
\left[\hat{c}_0^{(n)}+K\sigma h^{-\beta} g_0^{(\beta)}\right]u_i^n
=K\sigma h^{-\beta}\sum\limits_{k=i-M\atop k\neq0}^i\left(-g_k^{(\beta)}\right)u_{i-k}^n,\quad 1\leq i\leq M-1.\label{2.22}
\end{align}
Let $\|u^n\|_\infty=\mid u_{i_n}^n\mid$, where $i_n\in\{1,2,\cdots,M-1\}$. Let us consider equation \eqref{2.22} with $i=i_n$ and take absolute values on both sides of the equation. Based on Lemma \ref{lemma2.5} and the fact that the coefficients $K>0$, it can be seen that
\begin{align*}
&\left[\hat{c}_0^{(n)}+K\sigma h^{-\beta} g_0^{(\beta)}\right]\parallel u^n\parallel_\infty\\
\leq&K\sigma h^{-\beta}\sum\limits_{k=i_n-M\atop k\neq0}^{i_n}\left(-g_k^{(\beta)}\right)\mid u_{i_n-k}^n\mid\\
\leq&K\sigma h^{-\beta}\sum\limits_{k=i_n-M\atop k\neq0}^{i_n}\left(-g_k^{(\beta)}\right)\parallel u^n\parallel_\infty\\
\leq&K\sigma h^{-\beta} g_0^{(\beta)}\parallel u^n\parallel_\infty.
\end{align*}

Therefore, $\parallel u^n\parallel_\infty=0$ is derived, which indicates that the homogeneous linear equations \eqref{2.20}-\eqref{2.21}  have a single solution of 0. 
\end{proof}

We are now going to prove the unconditional stability of the difference scheme \eqref{2.17}-\eqref{2.19} with respect to the initial value and the inhomogeneous term $f(x,t)$. The correlation result is shown in the following theorem.

\begin{theorem}
 Let $\{u_i^n~|~0\leq i\leq M,~0\leq n\leq N\}$ be the solution of the difference scheme \eqref{2.17}-\eqref{2.19}. We have
\begin{align*}
\parallel u^n\parallel^2\leq\parallel u^0\parallel^2+\frac{(2L)^\beta}{Kc_*^{(\beta)} \sum\limits_{r=0}^{m}\frac{\lambda_r}{T^{\alpha_r}{\Gamma(1-\alpha_r)}}}\max\limits_{1\leq l\leq n}\parallel f^{l-1+\sigma}\parallel^2,\quad 1\leq n\leq N,
\end{align*}
where
$$\parallel f^{l-1+\sigma}\parallel^2=h\sum\limits_{i=1}^{M-1}\left(f_i^{l-1+\sigma}\right)^2.$$
\label{th2.2}
\end{theorem}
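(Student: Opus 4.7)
The plan is to prove this stability estimate by a standard discrete energy argument tailored to the multi-term fractional time operator. First I would take the discrete $L_2$ inner product of the scheme \eqref{2.17} with the test vector $v^{n-1+\sigma} := \sigma u^n + (1-\sigma)u^{n-1}\in V_h$. On the left-hand side, Lemma \ref{lemma2.7} applied with $v^k = u^k$ gives the key dissipation inequality
\begin{equation*}
\sum_{k=0}^{n-1}\hat c_k^{(n)}\bigl(u^{n-k}-u^{n-k-1},\,\sigma u^n+(1-\sigma)u^{n-1}\bigr)\;\ge\;\frac12\sum_{k=0}^{n-1}\hat c_k^{(n)}\bigl(\|u^{n-k}\|^2-\|u^{n-k-1}\|^2\bigr).
\end{equation*}
On the right-hand side the spatial Riesz term is controlled by Lemma \ref{lemma2.8} applied to $v^{n-1+\sigma}$, yielding an upper bound of $-Kc_*^{(\beta)}(2L)^{-\beta}\|v^{n-1+\sigma}\|^2$ for that contribution.

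Next I would bound the source contribution $(f^{n-1+\sigma},v^{n-1+\sigma})$ by the Cauchy–Schwarz inequality combined with the weighted Young inequality, choosing the weight precisely so as to cancel the negative spatial term. Concretely,
\begin{equation*}
(f^{n-1+\sigma},v^{n-1+\sigma})\;\le\;\frac{(2L)^{\beta}}{4Kc_*^{(\beta)}}\|f^{n-1+\sigma}\|^2+Kc_*^{(\beta)}(2L)^{-\beta}\|v^{n-1+\sigma}\|^2.
\end{equation*}
After absorption, one arrives at the purely time-discrete inequality
\begin{equation*}
\sum_{k=0}^{n-1}\hat c_k^{(n)}\bigl(\|u^{n-k}\|^2-\|u^{n-k-1}\|^2\bigr)\;\le\;\frac{(2L)^{\beta}}{2Kc_*^{(\beta)}}\|f^{n-1+\sigma}\|^2,
\end{equation*}
which isolates all spatial constants in the desired prefactor.

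The remaining task is to turn this one-step energy inequality into the claimed bound involving $\|u^0\|^2$ and $\max_{1\le l\le n}\|f^{l-1+\sigma}\|^2$. I plan to do this by induction on $n$: let $n_*\in\{0,1,\ldots,n\}$ be an index at which $\|u^{n_*}\|^2=\max_{0\le k\le n}\|u^k\|^2$ is attained. Writing the telescoping sum at level $n_*$ by Abel summation and using the strict monotonicity $\hat c_0^{(n_*)}>\hat c_1^{(n_*)}>\cdots>\hat c_{n_*-1}^{(n_*)}$ from Lemma \ref{lemma2.6}, every intermediate term $\|u^{n_*-k}\|^2$ with a positive coefficient is dominated by $\|u^{n_*}\|^2$, leaving the clean lower bound
\begin{equation*}
\sum_{k=0}^{n_*-1}\hat c_k^{(n_*)}\bigl(\|u^{n_*-k}\|^2-\|u^{n_*-k-1}\|^2\bigr)\;\ge\;\hat c_{n_*-1}^{(n_*)}\bigl(\|u^{n_*}\|^2-\|u^0\|^2\bigr).
\end{equation*}
Combining this with the energy inequality above and invoking the tail lower bound from Lemma \ref{lemma2.6}, namely $\hat c_{n_*-1}^{(n_*)}>\frac12\sum_{r=0}^{m}\frac{\lambda_r}{T^{\alpha_r}\Gamma(1-\alpha_r)}$ (which follows by using $\Gamma(2-\alpha_r)=(1-\alpha_r)\Gamma(1-\alpha_r)$ and the bound $(n_*-1+\sigma)\tau\le T$), yields the theorem since $\|u^n\|^2\le\|u^{n_*}\|^2$.

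The main obstacle is the delicate calibration between the constants produced by the two separate lower bounds on the discrete time operator: Lemma \ref{lemma2.7} is needed to extract a telescoping $\|u^{n-k}\|^2$-difference in the energy step, while Lemma \ref{lemma2.6} is needed both to apply Abel summation with the correct sign and to convert $\hat c_{n_*-1}^{(n_*)}$ into the $T$-dependent constant appearing in the statement. Matching these two uses — in particular verifying the identity that reduces $\frac{1-\alpha_r}{\Gamma(2-\alpha_r)}$ to $\frac{1}{\Gamma(1-\alpha_r)}$ and the telescoping-to-$\|u^0\|^2$ reduction via monotonicity — is the only nonroutine bookkeeping; everything else is a fairly standard discrete energy argument.
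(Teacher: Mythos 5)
Your proposal is correct and follows essentially the same route as the paper: the same energy step (testing the scheme against $\sigma u^n+(1-\sigma)u^{n-1}$), the same use of Lemma \ref{lemma2.7}, Lemma \ref{lemma2.8} and the calibrated Young inequality to reach the one-step energy inequality, and the same use of Lemma \ref{lemma2.6} (via $\Gamma(2-\alpha_r)=(1-\alpha_r)\Gamma(1-\alpha_r)$ and $t_{n-1+\sigma}\le T$) to produce the $T$-dependent constant. The only, harmless, deviation is in the closing step: where the paper finishes the Abel-summed recursion by mathematical induction on $n$, you evaluate it at an index maximizing $\|u^k\|^2$; both closings rest on the same monotonicity of the coefficients $\hat c_k^{(n)}$ and yield the identical constant.
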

\begin{proof}
Multiplying \eqref{2.17} by $h(\sigma u_i^n+(1-\sigma)u_i^{n-1})$ and summing up with $i$ from $1$ to $M-1$, we get
\begin{align}
\nonumber
&\sum\limits_{k=0}^{n-1}\hat{c}_k^{(n)}h\sum\limits_{i=1}^{M-1}\left(u_i^{n-k}-u_i^{n-k-1}\right)\left[\sigma u_i^n+(1-\sigma)u_i^{n-1}\right]\label{2.23}\\
\nonumber
=&-Kh^{-\beta}h\sum\limits_{i=1}^{M-1}\sum\limits_{k=i-M}^ig_k^{(\beta)}\left[\sigma u_{i-k}^n+(1-\sigma)u_{i-k}^{n-1}\right]\left[\sigma u_i^n+(1-\sigma)u_i^{n-1}\right]\\
&+h\sum\limits_{i=1}^{M-1}f_i^{n-1+\sigma}\left[\sigma u_i^n+(1-\sigma)u_i^{n-1}\right],\quad 1\leq n\leq N.
\end{align}
According to Lemma \ref{lemma2.7}, it follows that
\begin{align}\label{2.24}
\nonumber
&\sum\limits_{k=0}^{n-1}\hat{c}_k^{(n)}h\sum\limits_{i=1}^{M-1}\left(u_i^{n-k}-u_i^{n-k-1}\right)\left[\sigma u_i^n+(1-\sigma)u_i^{n-1}\right]\\
\nonumber
=&\sum\limits_{k=0}^{n-1}\hat{c}_k^{(n)}\left(u^{n-k}-u^{n-k-1},\sigma u^n+(1-\sigma)u^{n-1}\right)\\
\geq&\frac{1}{2}\sum\limits_{k=0}^{n-1}\hat{c}_k^{(n)}\left(\parallel u^{n-k}\parallel^2-\parallel u^{n-k-1}\parallel^2\right).
\end{align}
Using Lemma \ref{lemma2.8}, we obtain
\begin{align}
\nonumber
&-Kh^{-\beta} h\sum\limits_{i=1}^{M-1}\sum\limits_{k=i-M}^ig_k^{(\beta)}\left[\sigma u_{i-k}^n+(1-\sigma)u_{i-k}^{n-1}\right]\left[\sigma u_i^n+(1-\sigma)u_i^{n-1}\right]\label{2.25}\\
\leq&-Kc_*^{(\beta)}(2L)^{-\beta}\parallel \sigma u^n+(1-\sigma)u^{n-1}\parallel^2.
\end{align}
In addition, by exploiting Cauchy-Schwarz inequality, we can get
\begin{align}
\nonumber
&h\sum\limits_{i=1}^{M-1}f_i^{n-1+\sigma}\left[\sigma u_i^n+(1-\sigma)u_i^{n-1}\right]\label{2.26}\\
\nonumber
\leq&\parallel f^{n-1+\sigma}\parallel\cdot\parallel \sigma u^n+(1-\sigma)u^{n-1}\parallel\\
\leq&Kc_*^{(\beta)}(2L)^{-\beta}\parallel \sigma u^n+(1-\sigma)u^{n-1}\parallel^2+
\frac{(2L)^{\beta}}{4Kc_*^{(\beta)}}\parallel f^{n-1+\sigma}\parallel^2.
\end{align}
By substituting \eqref{2.24}-\eqref{2.26} into \eqref{2.23}, we have
\begin{align}
\frac{1}{2}\sum\limits_{k=0}^{n-1}\hat{c}_k^{(n)}\left(\parallel u^{n-k}\parallel^2-\parallel u^{n-k-1}\parallel^2\right)\leq\frac{(2L)^{\beta}}{4Kc_*^{(\beta)}}\parallel f^{n-1+\sigma}\parallel^2,\quad 1\leq n\leq N.\label{2.27}
\end{align}

With the use of Lemma \ref{lemma2.6}, we get
\begin{align}
\hat{c}_{n-1}^{(n)}\geq
\sum\limits_{r=0}^{m}\lambda_r\frac{\tau^{-\alpha_r}}{\Gamma(2-\alpha_r)}\cdot\frac{1-\alpha_r}{2}
(n-1+\sigma)^{-\alpha_r}\geq \frac{1}{2}\sum\limits_{r=0}^{m}\frac{\lambda_r}{T^{\alpha_r}{\Gamma(1-\alpha_r)}}.\label{2.28}
\end{align}
Combine \eqref{2.27} and \eqref{2.28}, and we arrives at the following inequality:
\begin{align*}
\nonumber
\hat{c}_0^{(n)}\parallel u^{n}\parallel^2
\leq&\sum\limits_{k=1}^{n-1}\left(\hat{c}_{k-1}^{(n)}-\hat{c}_k^{(n)}\right)\parallel u^{n-k}\parallel^2+\hat{c}_{n-1}^{(n)}\parallel u^{0}\parallel^2
+\frac{(2L)^{\beta}}{2Kc_*^{(\beta)}}\parallel f^{n-1+\sigma}\parallel^2\\
\leq&\sum\limits_{k=1}^{n-1}\left(\hat{c}_{k-1}^{(n)}-\hat{c}_k^{(n)}\right)\parallel u^{n-k}\parallel^2\\
&+\hat{c}_{n-1}^{(n)}\left(\parallel u^{0}\parallel^2+\frac{(2L)^{\beta}}{Kc_*^{(\beta)}\sum\limits_{r=0}^{m}\frac{\lambda_r}
{T^{\alpha_r}{\Gamma(1-\alpha_r)}}}\parallel f^{n-1+\sigma}\parallel^2\right),~1\leq n\leq N.
\end{align*}
By applying the mathematical induction method to the above inequality, we can get
\begin{align*}
\parallel u^n\parallel^2\leq\parallel u^0\parallel^2+\frac{(2L)^\beta}{Kc_*^{(\beta)} \sum\limits_{r=0}^{m}\frac{\lambda_r}{T^{\alpha_r}{\Gamma(1-\alpha_r)}}}\max\limits_{1\leq l\leq n}\parallel f^{l-1+\sigma}\parallel^2,\quad 1\leq n\leq N.
\end{align*}
This completes the proof.
\end{proof}
We have established the unconditional stability of our difference scheme \eqref{2.17}-\eqref{2.19}, and now we further show its convergence.

Suppose that $\{U_i^n~|~0\leq i\leq M,~0\leq n\leq N\}$ is the exact solution of the system \eqref{2.1}-\eqref{2.3} and $\{u_i^n~|~0\leq i\leq M,~0\leq n\leq N\}$ is the numerical solution of the difference scheme \eqref{2.17}-\eqref{2.19}.
 Let $e_i^n=U_i^n-u_i^n~(0\leq i\leq M,~0\leq n\leq N)$.

By subtracting \eqref{2.17}-\eqref{2.19} from \eqref{2.13}, \eqref{2.15}-\eqref{2.16}, respectively, we obtain the system of error equations as follows:
\begin{align*}
\nonumber
&\sum\limits_{k=0}^{n-1}\hat{c}_k^{(n)}\left(e_i^{n-k}-e_i^{n-k-1}\right)
=-Kh^{-\beta}\sum\limits_{k=i-M}^ig_k^{(\beta)}\left[\sigma e_{i-k}^n+(1-\sigma)e_{i-k}^{n-1}\right]+R_i^n,\\
&\qquad\qquad\qquad\qquad\qquad\qquad 1\leq i\leq M-1,~1\leq n\leq N,\\
&e_0^n=0,\quad e_M^n=0,\quad 0\leq n\leq N,\\
&e_i^0=0,\quad 1\leq i\leq M-1.
\end{align*}
By applying the conclusion of Theorem \ref{th2.2} and noticing \eqref{2.14}, we have
\begin{align*}
\parallel e^n\parallel^2&\leq\frac{(2L)^{\beta}}{Kc_*^{(\beta)} \sum\limits_{r=0}^{m}\frac{\lambda_r}{T^{\alpha_r}{\Gamma(1-\alpha_r)}}}\max\limits_{1\leq l\leq n}\parallel R^l\parallel^2\\
&\leq\frac{(2L)^{\beta}}{Kc_*^{(\beta)} \sum\limits_{r=0}^{m}\frac{\lambda_r}{T^{\alpha_r}{\Gamma(1-\alpha_r)}}}  \left[c_1\left(h^2+\tau^2+\Delta\alpha^2\right)\right]^2L,\quad 1\leq n\leq N.
\end{align*}
Extract the square root on both sides of the equation above, then we acquire
\begin{align*}
\parallel e^n\parallel\leq c_1\sqrt{\frac{2^\beta L^{\beta+1}}{Kc_*^{(\beta)} \sum\limits_{r=0}^{m}\frac{\lambda_r}{T^{\alpha_r}{\Gamma(1-\alpha_r)}}}}   \left(h^2+\tau^2+\Delta\alpha^2\right),\quad 1\leq n\leq N.
\end{align*}

Therefore, we can get the following theorem.
\begin{theorem}
Suppose that the continuous problem \eqref{2.1}-\eqref{2.3} has a smooth solution $u(x,t)\in C^{(5,3)}(\Omega\times[0,T])$, and let $u_i^n$ be the solution of the difference scheme \eqref{2.17}-\eqref{2.19}. It holds that
\begin{align*}
\parallel e^n\parallel\leq c_1\sqrt{\frac{2^\beta L^{\beta+1}}{Kc_*^{(\beta)} \sum\limits_{r=0}^{m}\frac{\lambda_r}{T^{\alpha_r}{\Gamma(1-\alpha_r)}}}}   \left(h^2+\tau^2+\Delta\alpha^2\right),\quad 1\leq n\leq N.
\end{align*}
\label{th2.3}
\end{theorem}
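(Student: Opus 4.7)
The plan is to treat this as a standard ``stability implies convergence'' argument, using Theorem \ref{th2.2} as a black box applied to the error. First I would define the error $e_i^n = U_i^n - u_i^n$ at each grid point, with the goal of obtaining an error system whose structure mirrors the difference scheme \eqref{2.17}--\eqref{2.19}.

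Next I would subtract the discrete scheme \eqref{2.17}--\eqref{2.19} from the consistency equations \eqref{2.13}, \eqref{2.15}--\eqref{2.16}. Since the initial and boundary data for $u_i^n$ agree exactly with those of $U_i^n$, the resulting error system has homogeneous initial-boundary conditions, i.e. $e_0^n = e_M^n = 0$ for $0 \le n \le N$ and $e_i^0 = 0$ for $1 \le i \le M-1$. Crucially, the only inhomogeneity is the local truncation error $R_i^n$, which plays the role that $f_i^{n-1+\sigma}$ played in the stability proof.

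I would then invoke Theorem \ref{th2.2} directly on this error system. Because $\|e^0\| = 0$, the stability estimate collapses to
\begin{equation*}
\|e^n\|^2 \;\leq\; \frac{(2L)^{\beta}}{K c_*^{(\beta)} \sum_{r=0}^{m}\frac{\lambda_r}{T^{\alpha_r}\Gamma(1-\alpha_r)}} \max_{1 \le l \le n}\|R^l\|^2.
\end{equation*}
To convert the pointwise truncation bound \eqref{2.14} into a bound on the discrete $L_2$-norm, I would observe that
\begin{equation*}
\|R^l\|^2 = h\sum_{i=1}^{M-1}(R_i^l)^2 \;\leq\; h(M-1)\bigl[c_1(h^2+\tau^2+\Delta\alpha^2)\bigr]^2 \;\leq\; L\bigl[c_1(h^2+\tau^2+\Delta\alpha^2)\bigr]^2,
\end{equation*}
which uses $hM = L$. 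Substituting this estimate, taking the square root, and collecting the constants yields precisely the claimed inequality.

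There is no real obstacle here: the heavy lifting (the positivity properties in Lemma \ref{lemma2.5}--\ref{lemma2.8} and the energy inequality leading to Theorem \ref{th2.2}) has already been done, and the consistency bound \eqref{2.14} was established during the construction of the scheme. The only mild care required is to verify that the error system satisfies the hypotheses of Theorem \ref{th2.2} verbatim, so that we may apply it with $R_i^n$ in place of $f_i^{n-1+\sigma}$ and with zero initial data; once that is noted, the proof is essentially a one-line substitution followed by the elementary $\|R^l\|^2 \le L\bigl[c_1(h^2+\tau^2+\Delta\alpha^2)\bigr]^2$ estimate.
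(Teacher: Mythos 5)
Your proposal is correct and coincides with the paper's own argument: the authors likewise form the error system by subtracting \eqref{2.17}--\eqref{2.19} from \eqref{2.13}, \eqref{2.15}--\eqref{2.16}, apply Theorem \ref{th2.2} with zero initial data and $R_i^n$ in place of $f_i^{n-1+\sigma}$, bound $\max_l\|R^l\|^2\leq L\bigl[c_1(h^2+\tau^2+\Delta\alpha^2)\bigr]^2$ via \eqref{2.14} and $h(M-1)\leq L$, and take square roots to obtain the stated constant $2^{\beta}L^{\beta+1}$. No gaps; nothing further is needed.
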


\subsection{Fast solution techniques with circulant preconditioner}\label{section2.3}
\quad We rewrite the proposed implicit difference scheme \eqref{2.17} as the following matrix form at the time level $n$:
\begin{equation}\label{2.29}
 A^{n}u^{n}=b^{n-1},\quad n=1,2,\ldots,N,
\end{equation}
 where
 \begin{equation}\label{2.30}
 A^{n}=\hat{c}_0^{(n)}I+\sigma Kh^{-\beta}G,
 \end{equation}
 and
 \begin{equation*}
 b^{n-1}=-(1-\sigma)Kh^{-\beta}Gu^{n-1}+\sum\limits_{k=1}^{n-1}(\hat{c}_{k-1}^{(n)}-\hat{c}_k^{(n)})u^{n-k}+\hat{c}_{n-1}^{(n)}u^0+f^{n-1+\sigma}.
 \end{equation*}
Here $I$ is the identity matrix of order $M-1$ and
\begin{equation}\label{2.31}
G=
\begin{bmatrix}
{g}_0^{(\beta)}&{g}_{-1}^{(\beta)}&{g}_{-2}^{(\beta)}&\cdots&{g}_{3-M}^{(\beta)}&{g}_{2-M}^{(\beta)}\\
{g}_1^{(\beta)}&{g}_{0}^{(\beta)}&{g}_{-1}^{(\beta)}&\cdots&{g}_{4-M}^{(\beta)}&{g}_{3-M}^{(\beta)}\\
{g}_2^{(\beta)}&{g}_{1}^{(\beta)}&{g}_{0}^{(\beta)}&\cdots&{g}_{5-M}^{(\beta)}&{g}_{4-M}^{(\beta)}\\
\vdots&\vdots&\vdots&\ddots&\vdots&\vdots\\
{g}_{M-3}^{(\beta)}&{g}_{M-4}^{(\beta)}&{g}_{M-5}^{(\beta)}&\cdots&{g}_{0}^{(\beta)}&{g}_{-1}^{(\beta)}\\
{g}_{M-2}^{(\beta)}&{g}_{M-3}^{(\beta)}&{g}_{M-4}^{(\beta)}&\cdots&{g}_{1}^{(\beta)}&{g}_{0}^{(\beta)}
\end{bmatrix}.
\end{equation}
It is obvious that $G$ is a symmetric Toeplitz matrix (see \cite{lei2013circulant}). Therefore, it can be stored with only $M-1$ entries and the fast Fourier transform (FFT) can be used to carry out the matrix-vector product in only $\mathcal{O}((M-1)\log(M-1))$ operations.

The following lemma guarantees the invertibility of the matrix $A^{n}$ defined in \eqref{2.30}.

\begin{lemma}
 The coefficient matrix
\begin{equation*}
 A^{n}=\hat{c}_0^{(n)}I+\sigma Kh^{-\beta}G
 \end{equation*}
of the linear system \eqref{2.29} is a symmetric positive definite matrix.
\label{lemma2.9}
\end{lemma}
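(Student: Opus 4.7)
My plan is to prove the two defining properties of a symmetric positive definite matrix in turn, relying on the structural results already stated in the paper.

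First, for symmetry, I would simply read it off from the structure. The matrix $G$ displayed in \eqref{2.31} is explicitly a symmetric Toeplitz matrix, as the authors already note, and $I$ is symmetric. Since $\hat{c}_0^{(n)}$, $\sigma$, $K$ and $h^{-\beta}$ are real scalars, $A^n$ is the sum of two real symmetric matrices and is therefore symmetric.

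For positive definiteness, I would work with an arbitrary vector $v=(v_1,\dots,v_{M-1})^T\ne 0$ (embedded in $V_h$ by setting $v_0=v_M=0$) and estimate $v^T A^n v$ term by term. The first term gives $\hat{c}_0^{(n)}\,v^T v=\hat{c}_0^{(n)}\|v\|_2^2$, and Lemma \ref{lemma2.6} guarantees $\hat{c}_0^{(n)}>0$ because the strict inequality chain bottoms out at a strictly positive sum (each $\lambda_r>0$, and for the indices $r$ with $\alpha_r<1$ the factor $(1-\alpha_r)/2$ is strictly positive). For the second term, the key is to convert the discrete inner-product bound in Lemma \ref{lemma2.8} into a statement about $v^T G v$. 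Observing that $\sum_{k=i-M}^{i} g_k^{(\beta)} v_{i-k}=(Gv)_i$, Lemma \ref{lemma2.8} reads
\begin{equation*}
-h^{-\beta}\, h\sum_{i=1}^{M-1}(Gv)_i v_i \le -c_*^{(\beta)}(2L)^{-\beta}\, h\sum_{i=1}^{M-1} v_i^2,
\end{equation*}
which, after multiplying by $-1/h$, becomes $h^{-\beta}\, v^T G v \ge c_*^{(\beta)}(2L)^{-\beta}\,\|v\|_2^2$. Multiplying by the positive constant $\sigma K$ and adding the bound on the identity term yields
\begin{equation*}
v^T A^n v \;\ge\; \Bigl(\hat{c}_0^{(n)} + \sigma K\, c_*^{(\beta)}(2L)^{-\beta}\Bigr)\|v\|_2^2 \;>\; 0
\end{equation*}
for every nonzero $v$, which is exactly what is needed.

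The step that requires the most care is the bookkeeping around Lemma \ref{lemma2.8}: I must be explicit that the indexing convention $\sum_{k=i-M}^{i} g_k^{(\beta)} v_{i-k}$ coincides with the $i$-th component of $Gv$ for the symmetric Toeplitz matrix defined in \eqref{2.31}, using $v_0=v_M=0$ from the $V_h$ embedding. Once this identification is made, the rest is mechanical, since Lemmas \ref{lemma2.6} and \ref{lemma2.8} do essentially all of the analytic work; no new estimates are required.
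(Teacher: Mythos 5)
Your argument is correct, but it takes a genuinely different route from the paper. The paper proves positive definiteness via Lemma \ref{lemma2.5} alone: using $g_0^{(\beta)}\ge 0$, $g_k^{(\beta)}\le 0$ for $k\ne 0$, and $-\sum_{k\ne 0}g_k^{(\beta)}\le g_0^{(\beta)}$, it shows $|a_{ii}^n|-\sum_{j\ne i}|a_{ij}^n|\ge \hat{c}_0^{(n)}>0$, so $A^n$ is a strictly diagonally dominant symmetric matrix with positive diagonal, and Gershgorin's theorem gives positive eigenvalues. You instead bound the quadratic form directly, converting the discrete energy estimate of Lemma \ref{lemma2.8} into $h^{-\beta}v^TGv\ge c_*^{(\beta)}(2L)^{-\beta}\|v\|_2^2$ and adding the identity contribution. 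Both are valid; your identification of $\sum_{k=i-M}^{i}g_k^{(\beta)}v_{i-k}$ with $(Gv)_i$ under the embedding $v_0=v_M=0$ is the right bookkeeping, and your justification of $\hat{c}_0^{(n)}>0$ via Lemma \ref{lemma2.6} is actually more explicit than the paper's, which simply asserts it. The paper's route is more elementary, needing only the sign and summation properties of the coefficients $g_k^{(\beta)}$; yours leans on the deeper Lemma \ref{lemma2.8} but in exchange yields a quantitative lower bound $\lambda_{\min}(A^n)\ge \hat{c}_0^{(n)}+\sigma K c_*^{(\beta)}(2L)^{-\beta}$ (degenerating gracefully to $\hat{c}_0^{(n)}$ at $\beta=2$, where $c_*^{(\beta)}=0$), and it mirrors exactly the mechanism used later in the stability proof of Theorem \ref{th2.2}, which is a nice structural economy.
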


\begin{proof}
Let $a_{ij}^n$ be the $(i,j)$ entry of the $A^n$. We notice Lemma \ref{lemma2.5} and $\hat{c}_0^{(n)}>0$, thus
\begin{equation*}
\begin{split}
& |a_{ii}^n|-\sum\limits_{j=1,j\neq i}^{M-1}|a_{ij}^n| \\
 = &(\hat{c}_0^{(n)}+\sigma Kh^{-\beta}g_0^{(\beta)})-\sigma Kh^{-\beta}(\sum\limits_{j=-M+i+1,j\neq 0}^{i-1}|g_j^{(\beta)}|) \\
 =&\hat{c}_0^{(n)}+\sigma Kh^{-\beta}\sum\limits_{j=-M+i+1}^{i-1}g_j^{(\beta)}\\
>&\hat{c}_0^{(n)}>0.
\end{split}
\end{equation*}
This implies that $A^n$ is a strictly diagonally dominant matrix. According to Lemma \ref{lemma2.5}, it is easy to prove the symmetry of the coefficient matrix $A^n$ and all the main diagonal elements of $A^n$ are positive. Hence, all its eigenvalues are positive. So the coefficient matrix is a symmetric positive definite matrix.
\end{proof}
 It is well-known that the conjugate gradient (CG) method is a popular and effective Krylov subspace method \cite{lei2013circulant} for solving symmetric positive systems with Toeplitz coefficient matrix. Nevertheless, the drawback of the CG method is its slow convergence when the eigenvalues of the coefficient matrix $A^n$ are not clustered \cite{chan1989toeplitz}. To overtake this shortcoming, we use the CG method with a circulant preconditioner (PCG) to solve such linear systems \cite{lei2013circulant}.

We propose a circulant preconditioner, which is generated from the famous R. Chan circulant preconditioner \cite{chan2007introduction} to solve the Toeplitz linear system \eqref{2.29}. For a Toeplitz matrix $G_n\in\mathbb{C}^{n\times n}$ with form of \eqref{2.31}, the R. chan circulant preconditioner $R_n$  makes use of all the entries \cite{chan2007introduction}. Its entries $r_{ij}=r_{i-j}$ are given by
\begin{eqnarray*}
r_k=
\begin{cases}
g_0,    &k=0,\\
g_k+g_{k-n},      &0<k<n\\
r_{k+n},&0<-k<n.
\end{cases}
\end{eqnarray*}
Then the PCG method is employed to solve the following preconditioned system
\begin{equation*}
 (C^{n})^{-1}A^{n}u^{n}=(C^{n})^{-1}b^{n-1},\quad n=1,2,\ldots,N,
\end{equation*}
and the R. Chan-based circulant preconditioner $C^{n}$ takes the following form
\begin{equation*}
 C^{n}=\hat{c}_0^{(n)}I+\sigma Kh^{-\beta}c(G).
 \end{equation*}
 More precisely, the first column of $c(G)$ is given by
 \begin{center}
$\left(
  \begin{array}{c}
    g_0^{(\beta)} \\
   g_1^{(\beta)}+g_{2-M}^{(\beta)} \\
   g_2^{(\beta)}+g_{3-M}^{(\beta)} \\
    \vdots \\
    \vdots \\
    g_{M-3}^{(\beta)}+g_{-2}^{(\beta)} \\ \\
    g_{M-2}^{(\beta)}+g_{-1}^{(\beta)} \\ \\
  \end{array}
\right)$.
 \end{center}
 Below we discuss the basic properties of the circulant preconditioner $C^{n}$.

 \begin{lemma}
 The circulant preconditioner
\begin{equation*}
 C^{n}=\hat{c}_0^{(n)}I+\sigma Kh^{-\beta}c(G)
 \end{equation*}
is a symmetric positive definite matrix.
\label{lemma2.10}
\end{lemma}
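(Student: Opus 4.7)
The plan is to mimic the strategy used for Lemma 2.9: first verify that $c(G)$ (hence $C^{n}$) is real symmetric, and then establish strict diagonal dominance with positive diagonal entries, from which symmetric positive definiteness follows by Gershgorin's theorem.

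First I would verify that $c(G)$ is symmetric. Since $c(G)$ is circulant, symmetry is equivalent to the condition $c_k=c_{n-k}$ on its first column, where $n=M-1$. Using the formula $c_k=g_k^{(\beta)}+g_{k-n}^{(\beta)}$ for $1\le k\le n-1$ together with the parity identity $g_{-l}^{(\beta)}=g_l^{(\beta)}$ from Lemma~\ref{lemma2.5}, one checks $c_k=g_k^{(\beta)}+g_{n-k}^{(\beta)}=c_{n-k}$. Consequently $c(G)$ is symmetric, and so is $C^{n}=\hat c_0^{(n)}I+\sigma K h^{-\beta}c(G)$.

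Next I would establish strict diagonal dominance of $C^{n}$. The diagonal entry of $C^{n}$ is $\hat c_0^{(n)}+\sigma K h^{-\beta}g_0^{(\beta)}$, which is positive because $\hat c_0^{(n)}>0$ and $g_0^{(\beta)}\ge 0$ by Lemma~\ref{lemma2.5}. For the off-diagonal sum in each row, since every $g_k^{(\beta)}\le 0$ for $k\ne 0$, one has
\begin{equation*}
\sum_{k=1}^{n-1}|c_k|=-\sum_{k=1}^{n-1}\bigl(g_k^{(\beta)}+g_{k-n}^{(\beta)}\bigr)=-\sum_{\substack{k=-(n-1)\\ k\ne 0}}^{n-1}g_k^{(\beta)}.
\end{equation*}
Combining this with $\sum_{k=-\infty}^{\infty}g_k^{(\beta)}=0$ from Lemma~\ref{lemma2.5} gives $\sum_{k=1}^{n-1}|c_k|=g_0^{(\beta)}+\sum_{|k|\ge n}g_k^{(\beta)}\le g_0^{(\beta)}$, again using $g_k^{(\beta)}\le 0$ for $k\ne 0$. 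Therefore
\begin{equation*}
|a_{ii}^{n}|-\sum_{j\ne i}|a_{ij}^{n}|=\hat c_0^{(n)}+\sigma K h^{-\beta}\Bigl(g_0^{(\beta)}-\sum_{k=1}^{n-1}|c_k|\Bigr)\ge \hat c_0^{(n)}>0,
\end{equation*}
so $C^{n}$ is strictly diagonally dominant with positive diagonal entries. Being real symmetric and strictly diagonally dominant with positive diagonal, $C^{n}$ is symmetric positive definite.

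The main technical subtlety is handling the wrap-around term $g_{k-n}^{(\beta)}$ that distinguishes R.~Chan's circulant from the original Toeplitz block: one must be careful that after this wrap-around the off-diagonal sum still telescopes against $\sum_{k}g_k^{(\beta)}=0$ to yield an upper bound of $g_0^{(\beta)}$, rather than something larger. Once that bound is secured, the remaining $\hat c_0^{(n)}$ contributed by the identity block provides the strict inequality, and the conclusion follows exactly as in Lemma~\ref{lemma2.9}.
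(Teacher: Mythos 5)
Your proposal is correct and follows essentially the same route as the paper's proof: both verify symmetry of $c(G)$ via $g_{-k}^{(\beta)}=g_k^{(\beta)}$, then establish strict diagonal dominance with positive diagonal by bounding the wrapped off-diagonal sum $\sum_k|g_k^{(\beta)}+g_{k-(M-1)}^{(\beta)}|$ by $g_0^{(\beta)}$ using $\sum_{k}g_k^{(\beta)}=0$ and $g_k^{(\beta)}\le 0$ for $k\ne 0$, so that the identity block's $\hat c_0^{(n)}>0$ secures positive definiteness. Your version is if anything slightly more careful, since the non-strict bound $\le g_0^{(\beta)}$ correctly covers the case $\beta=2$ where the tail terms vanish.
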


\begin{proof}
As similar to Lemma \ref{lemma2.9}, suppose $c_{ij}^n$ be the $(i,j)$ entry of $C^n$. Based on Lemma \ref{lemma2.5} and $\hat{c}_0^{(n)}>0$ we get
\begin{equation*}
\begin{split}
& |c_{ii}^n|-\sum\limits_{j=1,j\neq i}^{M-1}|c_{ij}^n| \\
 =& (\hat{c}_0^{(n)}+\sigma Kh^{-\beta}g_0^{(\beta)})-\sigma Kh^{-\beta}(\sum\limits_{j=1}^{M-2}|g_j^{(\beta)}+g_{-j}^{(\beta)}|) \\
 =&\hat{c}_0^{(n)}+\sigma Kh^{-\beta}\sum\limits_{j=2-M}^{M-2}g_j^{(\beta)}\\
>&\hat{c}_0^{(n)}>0,
\end{split}
\end{equation*}
which implies that $C^n$ is a strictly diagonally dominant matrix. From Lemma \ref{lemma2.5}, we can easily know that the main diagonal elements of $C^n$ are positive and $C^n$ is symmetric. Therefore, $C^n$ is a symmetric positive definite matrix.
\end{proof}
 Lemma \ref{lemma2.10} suggests that the preconditioner $C^n$ is invertible. In addition, the eigenvalue distributions of preconditioned matrices $(C^{n})^{-1}A^{n}$ are theoretically proven to be clustered around 1 \cite{chan2007introduction}. The convergence rate of PCG is superlinear
\cite{chan1989toeplitz}. We will demonstrate numerically that the circulant preconditioning exhibits nice clustering eigenvalues in Section \ref{section4}. It is both numerically and theoretically guaranteed that the computational cost per iteration of PCG is $\mathcal{O}((M-1)\log(M-1))$ and the total cost at each time step is $\mathcal{O}((M-1)\log(M-1))$.

\section{Two-dimensional problem}\label{section3}

\quad Consider the following two-dimensional TDRFDE:
\begin{align}
\nonumber
&{}D_{t}^{\omega(\alpha)}u(x,y,t)=K_1\frac{\partial^{\beta}u(x,y,t)}{\partial|
x|^{\beta}}+K_2\frac{\partial^{\gamma}u(x,y,t)}{\partial|
y|^{\gamma}}+f(x,y,t),\\
&\qquad\qquad\qquad\qquad\qquad\qquad\qquad\qquad (x,y)\in\Omega,~0<t\leq T,\label{3.1}\\
&u(x,y,t)=0,\quad (x,y)\in\partial\Omega,~0\leq t\leq T,\label{3.2}\\
&u(x,y,0)=\phi(x,y),\quad (x,y)\in\Omega,\label{3.3}
\end{align}
where $\Omega=(0,L_1)\times(0,L_2)$, $\partial\Omega$ is the boundary of $\Omega$, $f(x,y,t)$ and $\phi(x,y)$ are given functions. Especially, $\phi(x,y)=0$ holds when $(x,y)\in\partial\Omega$.

In this section, we can directly extend the idea for solving the one-dimensional problem \eqref{2.1}-\eqref{2.3} to handle the two-dimensional problem \eqref{3.1}-\eqref{3.3}. We propose a second-order difference scheme based on the interpolation approximation on a special point to solve the two-dimensional TDRFDE. The unique solvability, unconditional stability and convergence of the proposed difference scheme are also discussed. Furthermore, a multilevel circulant preconditioner is proposed to accelerate the convergence rate of the Krylov subspace method.

\subsection{Numerical discretization for \eqref{3.1}-\eqref{3.3}}\label{section3.1}
\quad To derive the difference scheme of \eqref{3.1}-\eqref{3.3}, we first divide the interval $[0,L_1]$ into $M_1$-subintervals with $h_1=\frac{L_1}{M_1}$ and $x_i=ih_1~(0\leq i\leq M_1)$, and divide the interval $[0,L_2]$ into $M_2$-subintervals with $h_2=\frac{L_2}{M_2}$ and $y_j=jh_2~(0\leq j\leq M_2)$.

Denote $\omega=\{(i,j)~|~1\leq i\leq M_1-1,~1\leq j\leq M_2-1\}$, $\partial\omega=\{(i,j)~|~(x_i,y_j)\in\partial\Omega\}$, $\bar{\omega}=\omega\bigcup\partial\omega$.\\
For simplicity, we define
\begin{align*}
&U_{ij}^n=u(x_i,y_j,t_n),\quad(i,j)\in\bar{\omega},~0\leq n\leq N;\\
&f_{ij}^{n-1+\sigma}=f(x_i,y_j,t_{n-1+\sigma}),\quad(i,j)\in\bar{\omega},~1\leq n\leq N.
\end{align*}
Suppose $u(x,y,t)\in C^{(5,5,3)}~(\Omega\times[0,T])$.
Considering equation \eqref{3.1} at the point $(x_i,y_j,t_{n-1+\sigma})$, we have
\begin{align}
\nonumber
&{}D_{t}^{\omega(\alpha)}u(x_i,y_j,t_{n-1+\sigma})=K_1\frac{\partial^{\beta}u(x_i,y_j,t_{n-1+\sigma})}
{\partial| x|^{\beta}}+K_2\frac{\partial^{\gamma}u(x_i,y_j,t_{n-1+\sigma})}{\partial|
y|^{\gamma}}+f_{ij}^{n-1+\sigma},\\
&\qquad\qquad\qquad\qquad\qquad\qquad (i,j)\in\omega,~1\leq n\leq N.\label{3.4}
\end{align}
Using Lemma \ref{lemma2.1} and Lemma \ref{lemma2.3}, we get
\begin{align}
{}D_{t}^{\omega(\alpha)}u(x_i,y_j,t_{n-1+\sigma})=
\sum\limits_{k=0}^{n-1}\hat{c}_k^{(n)}\left(U_{ij}^{n-k}-U_{ij}^{n-k-1}\right)+\mathcal{O}\left(\tau^{3-\alpha_m}+\Delta\alpha^2\right).\label{3.5}
\end{align}

Moreover, by applying the second-order linear interpolation formula to the Riesz derivative on the right side of \eqref{3.4} and using Lemma \ref{lemma2.4}, we obtain
\begin{align}\label{3.6}
\nonumber
&\frac{\partial^{\beta}u(x_i,y_j,t_{n-1+\sigma})}{\partial| x|^{\beta}}\\
\nonumber
=&\sigma\frac{\partial^{\beta}u(x_i,y_j,t_n)}{\partial|
x|^{\beta}}+\left(1-\sigma\right)\frac{\partial^{\beta}u(x_i,y_j,t_{n-1})}{\partial|
x|^{\beta}}+\mathcal{O}(\tau^{2})\\
\nonumber
=&\sigma\left[-h_1^{-\beta}\sum\limits_{k=i-M_1}^ig_k^{(\beta)}U_{i-k,j}^n\right]+(1-\sigma)
\left[-h_1^{-\beta}\sum\limits_{k=i-M_1}^ig_k^{(\beta)}U_{i-k,j}^{n-1}\right]+\mathcal{O}(h_1^2+\tau^{2})\\
=&-h_1^{-\beta}\sum\limits_{k=i-M_1}^ig_k^{(\beta)}\left[\sigma U_{i-k,j}^n+(1-\sigma)U_{i-k,j}^{n-1}\right]+\mathcal{O}(h_1^2+\tau^{2}).
\end{align}
Similarly, we can get
\begin{align}\label{3.7}
\frac{\partial^{\gamma}u(x_i,y_j,t_{n-1+\sigma})}{\partial| y|^{\gamma}}
=-h_2^{-\gamma}\sum\limits_{k=j-M_2}^jg_k^{(\gamma)}\left[\sigma U_{i,j-k}^n+(1-\sigma)U_{i,j-k}^{n-1}\right]+\mathcal{O}(h_2^2+\tau^{2}).
\end{align}
By substituting \eqref{3.5}-\eqref{3.7} into \eqref{3.4}, we can get
\begin{align}
\nonumber
&\sum\limits_{k=0}^{n-1}\hat{c}_k^{(n)}\left(U_{ij}^{n-k}-U_{ij}^{n-k-1}\right)\\
\nonumber
=&-K_1h_1^{-\beta}\sum\limits_{k=i-M_1}^ig_k^{(\beta)}\left[\sigma U_{i-k,j}^n+(1-\sigma)U_{i-k,j}^{n-1}\right]\\
\nonumber
&-K_2h_2^{-\gamma}\sum\limits_{k=j-M_2}^jg_k^{(\gamma)}\left[\sigma U_{i,j-k}^n+(1-\sigma)U_{i,j-k}^{n-1}\right]\\
&+f_{ij}^{n-1+\sigma}+S_{ij}^n,\quad(i,j)\in\omega,~1\leq n\leq N,\label{3.8}
\end{align}
where there exists a positive constant $c_2$ such that
\begin{align}
\mid S_{ij}^n\mid\leq c_2\left(h_1^2+h_2^2+\tau^2+\Delta\alpha^2\right),\quad (i,j)\in\omega,~1\leq n\leq N.\label{3.9}
\end{align}

Notice the initial and boundary conditions \eqref{3.2}-\eqref{3.3}, and we have
\begin{align}
&U_{ij}^n=0,\quad (i,j)\in\partial\omega,~0\leq n\leq N,\label{3.10}\\
&U_{ij}^0=\phi(x_i,y_j),\quad (i,j)\in\omega.\label{3.11}
\end{align}
Thus, by neglecting the small term $S_{ij}^n$ in \eqref{3.8} and replacing the exact solution $U_{ij}^n$ with the numerical ones $u_{ij}^k$ in \eqref{3.8} and \eqref{3.10}-\eqref{3.11}, we can get the difference scheme for solving \eqref{3.1}-\eqref{3.3} as follows:
\begin{align}
\nonumber
&\sum\limits_{k=0}^{n-1}\hat{c}_k^{(n)}\left(u_{ij}^{n-k}-u_{ij}^{n-k-1}\right)\\
\nonumber
=&-K_1h_1^{-\beta}\sum\limits_{k=i-M_1}^ig_k^{(\beta)}\left[\sigma u_{i-k,j}^n+(1-\sigma)u_{i-k,j}^{n-1}\right]\\
\nonumber
&-K_2h_2^{-\gamma}\sum\limits_{k=j-M_2}^jg_k^{(\gamma)}\left[\sigma u_{i,j-k}^n+(1-\sigma)u_{i,j-k}^{n-1}\right]\\
&+f_{ij}^{n-1+\sigma},\quad(i,j)\in\omega,~1\leq n\leq N,\label{3.12}\\
&u_{ij}^n=0,\quad (i,j)\in\partial\omega,~0\leq n\leq N,\label{3.13}\\
&u_{ij}^0=\phi(x_i,y_j),\quad (i,j)\in\omega.\label{3.14}
\end{align}

\subsection{Solvability, stability and convergence analysis}
\quad In this subsection, we show that the difference scheme \eqref{3.12}-\eqref{3.14} obtained in Section \ref{section3.1} is uniquely solvable, unconditionally stable and convergent with the order of $\mathcal{O}(h_1^2+h_2^2+\tau^2+\Delta\alpha^2)$.

Let
\begin{align*}
V_h=\{v~|~v=\{v_{ij}~|~(i,j)\in\bar{\omega}\}\},\quad \hat{V}_h=\{v~|~v\in V_h;~v_{ij}=0~when~(i,j)\in\partial\omega\}.
\end{align*}
For any $v,~w\in \hat{V}_h$, the discrete inner product and the corresponding discrete $L_2$-norms are defined as follows:
\begin{align*}
(v,w)=h_1h_2\sum\limits_{i=1}^{M_1-1}\sum\limits_{j=1}^{M_2-1}v_{ij}w_{ij},\quad and \quad \|v\|=\sqrt{(v,v)}.
\end{align*}

We now work towards showing the unique solvability of difference scheme \eqref{3.12}-\eqref{3.14}. The desired result is reported by the following theorem.
\begin{theorem}
The difference scheme \eqref{3.12}-\eqref{3.14} is uniquely solvable.
\label{th3.1}
\end{theorem}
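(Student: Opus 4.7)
The plan is to mirror the one-dimensional argument of Theorem~\ref{th2.1} verbatim, using induction on the time level. The initial data $u^0$ are fixed by \eqref{3.14}, and if $u^0,\ldots,u^{n-1}$ are already determined, then \eqref{3.12} becomes a linear system for the unknown grid function $\{u_{ij}^n\}_{(i,j)\in\omega}$ (with boundary values zero). By standard linear algebra it therefore suffices to show that the associated homogeneous system admits only the trivial solution, i.e.\ that $f_{ij}^{n-1+\sigma}\equiv 0$ and $u^{0}=\cdots=u^{n-1}=0$ force $u^{n}\equiv 0$.

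To this end I would peel off the diagonal contribution of both Riesz stencils, rewriting the homogeneous equation as
$$\bigl[\hat c_0^{(n)}+\sigma K_1 h_1^{-\beta} g_0^{(\beta)}+\sigma K_2 h_2^{-\gamma} g_0^{(\gamma)}\bigr]u_{ij}^n = \sigma K_1 h_1^{-\beta}\!\!\!\sum_{\substack{k=i-M_1\\ k\ne 0}}^{i}\!(-g_k^{(\beta)})u_{i-k,j}^n+\sigma K_2 h_2^{-\gamma}\!\!\!\sum_{\substack{k=j-M_2\\ k\ne 0}}^{j}\!(-g_k^{(\gamma)})u_{i,j-k}^n,$$
which is valid for every $(i,j)\in\omega$. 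I would then select indices $(i_n,j_n)\in\omega$ at which $\|u^n\|_\infty=|u_{i_n,j_n}^n|$ is attained, evaluate the displayed equation at $(i_n,j_n)$, take absolute values, and bound each $|u_{i_n-k,j_n}^n|$ and $|u_{i_n,j_n-k}^n|$ by $\|u^n\|_\infty$.

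By Lemma~\ref{lemma2.5} the off-diagonal coefficients satisfy $-g_k^{(\beta)}\ge 0$ for $k\ne 0$ together with $-\sum_{k\ne 0}g_k^{(\beta)}\le g_0^{(\beta)}$, and the same is true with $\beta$ replaced by $\gamma$. Applying these bounds separately along the $x$-row through $(i_n,j_n)$ and along the $y$-column through $(i_n,j_n)$, and adding the two estimates, yields
$$\bigl[\hat c_0^{(n)}+\sigma K_1 h_1^{-\beta} g_0^{(\beta)}+\sigma K_2 h_2^{-\gamma} g_0^{(\gamma)}\bigr]\|u^n\|_\infty \le \bigl[\sigma K_1 h_1^{-\beta} g_0^{(\beta)}+\sigma K_2 h_2^{-\gamma} g_0^{(\gamma)}\bigr]\|u^n\|_\infty.$$
Since $\hat c_0^{(n)}>0$ by Lemma~\ref{lemma2.6}, this inequality forces $\|u^n\|_\infty=0$, hence $u^n\equiv 0$, and unique solvability follows by induction.

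The only point requiring a moment's care is that Lemma~\ref{lemma2.5} must be invoked twice in parallel, once for the $\beta$-stencil along rows and once for the $\gamma$-stencil along columns; because both off-diagonal sums are non-negative they simply add, and no tensor-product bookkeeping is needed. Consequently I do not expect a genuine obstacle: the proof is essentially a two-directional copy of Theorem~\ref{th2.1}, and the strict diagonal dominance produced by the positive term $\hat c_0^{(n)}$ is what drives the conclusion.
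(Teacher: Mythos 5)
Your proposal is correct and follows essentially the same route as the paper's own proof: the same inductive reduction to the homogeneous system, the same rearrangement isolating the diagonal contributions of both Riesz stencils, and the same maximum-principle argument at the index $(i_n,j_n)$ using Lemma~\ref{lemma2.5} for the $\beta$- and $\gamma$-directions separately, with $\hat c_0^{(n)}>0$ forcing $\|u^n\|_\infty=0$. No substantive differences to report.
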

\begin{proof}
Let
$$u^n=\{u_{ij}^n~|~(i,j)\in\bar{\omega}\}.$$

According to \eqref{3.13}-\eqref{3.14}, the value of $u^0$ is determined. Now suppose that $\{u^k~|~0\leq k\leq n-1\}$ has been determined. According to \eqref{3.12} and \eqref{3.13}, we get a linear system of equations with respect to $u^n$. Then we only need to prove that the corresponding homogeneous linear system
\begin{align}
&\hat{c}_0^{(n)}u_{ij}^n
=-K_1\sigma h_1^{-\beta}\sum\limits_{k=i-M_1}^ig_k^{(\beta)}u_{i-k,j}^n
-K_2\sigma h_2^{-\gamma}\sum\limits_{k=j-M_2}^jg_k^{(\gamma)}u_{i,j-k}^n,\quad (i,j)\in\omega,\label{3.15}\\
&u_{ij}^n=0,\quad (i,j)\in\partial\omega\label{3.16}
\end{align}
only has solution of 0.

We first rewrite the equation \eqref{3.15} as follows:
\begin{align}
\nonumber
&\left[\hat{c}_0^{(n)}+K_1\sigma h_1^{-\beta}g_0^{(\beta)}+K_2\sigma h_2^{-\gamma}g_0^{(\gamma)}\right]u_{ij}^n\\
=&K_1\sigma h_1^{-\beta}\sum\limits_{k=i-M_1\atop k\neq0}^i\left(-g_k^{(\beta)}\right)u_{i-k,j}^n
+K_2\sigma h_2^{-\gamma}\sum\limits_{k=j-M_2\atop k\neq0}^j\left(-g_k^{(\gamma)}\right)u_{i,j-k}^n, \quad (i,j)\in\omega.\label{3.17}
\end{align}
Let $\|u^n\|_\infty=\mid u_{i_n,j_n}^n\mid$, where $(i_n,j_n)\in\omega$. We consider the equation \eqref{3.17} with $(i,j)=(i_n,j_n)$ and take absolute values on both sides of the equation. Noticing that the coefficients $K_1>0,~K_2>0$, based on Lemma \ref{lemma2.5} and using triangle inequality, we have
\begin{align*}
\nonumber
&\left[\hat{c}_0^{(n)}+K_1\sigma h_1^{-\beta}g_0^{(\beta)}+K_2\sigma h_2^{-\gamma}g_0^{(\gamma)}\right]\parallel u^n\parallel_\infty\\
\nonumber
=&K_1\sigma h_1^{-\beta}\sum\limits_{k=i_n-M_1\atop k\neq0}^{i_n}\left(-g_k^{(\beta)}\right)|u_{i_n-k,j_n}^n|
+K_2\sigma h_2^{-\gamma}\sum\limits_{k=j_n-M_2\atop k\neq0}^{j_n}\left(-g_k^{(\gamma)}\right)|u_{i_n,j_n-k}^n|\\
\nonumber
\leq&K_1\sigma h_1^{-\beta}\sum\limits_{k=i_n-M_1\atop k\neq0}^{i_n}\left(-g_k^{(\beta)}\right)\parallel u^n\parallel_\infty+K_2\sigma h_2^{-\gamma}\sum\limits_{k=j_n-M_2\atop k\neq0}^{j_n}\left(-g_k^{(\gamma)}\right)\parallel u^n\parallel_\infty\\
\leq&\left[K_1\sigma h_1^{-\beta}g_0^{(\beta)}+K_2\sigma h_2^{-\gamma}g_0^{(\gamma)}\right]\parallel u^n\parallel_\infty.
\end{align*}

Therefore, we get $\parallel u^n\parallel_\infty=0$, which indicates that the homogeneous linear equations \eqref{3.15}-\eqref{3.16} only have solution 0. According to the mathematical induction, the difference scheme \eqref{3.12}-\eqref{3.14} is uniquely solvable.
\end{proof}

We will now discuss the unconditional stability of the difference scheme \eqref{3.12}-\eqref{3.14} with respect to the initial value and the inhomogeneous term $f(x,y,t)$. 
\begin{theorem}
 Let $\{u_{ij}^n~|~(i,j)\in\bar{\omega},~0\leq n\leq N\}$ be the solution of the difference scheme \eqref{3.12}-\eqref{3.14}. We have
\begin{align*}
\parallel u^n\parallel^2\leq&\parallel u^0\parallel^2\\
&+\frac{1}{4}\left[\frac{(2L_1)^{\beta}}{K_1c_*^{(\beta)}}+\frac{(2L_2)^{\gamma}}{K_2c_*^{(\gamma)}}
\right]\frac{1}{\sum\limits_{r=0}^{m}\frac{\lambda_r}{T^{\alpha_r}{\Gamma(1-\alpha_r)}}}\max\limits_{1\leq l\leq n}\parallel f^{l-1+\sigma}\parallel^2,\quad 1\leq n\leq N,
\end{align*}
where
$$\parallel f^{l-1+\sigma}\parallel^2=h_1h_2\sum\limits_{i=1}^{M_1-1}\sum\limits_{j=1}^{M_2-1}
\left(f_{ij}^{l-1+\sigma}\right)^2.$$
\label{th3.2}
\end{theorem}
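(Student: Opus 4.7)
The plan is to mirror the 1D argument in Theorem \ref{th2.2}, replacing the single Riesz term with two (one per spatial direction) and choosing a slightly more delicate Young-type estimate for the source so that the constant in the theorem emerges correctly. I would take the discrete inner product of the scheme \eqref{3.12} with $h_1 h_2 [\sigma u_{ij}^n + (1-\sigma) u_{ij}^{n-1}]$ and sum over $(i,j)\in\omega$. Writing $w = \sigma u^n + (1-\sigma) u^{n-1}$, this produces three quantities to estimate: the time-memory sum on the left, two Riesz double sums on the right, and the source inner product $(f^{n-1+\sigma},w)$.

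For the time sum, Lemma \ref{lemma2.7} (which holds in any inner-product space, hence for the 2D discrete inner product on $\hat V_h$) yields the lower bound $\tfrac12\sum_{k=0}^{n-1}\hat c_k^{(n)}(\|u^{n-k}\|^2-\|u^{n-k-1}\|^2)$, exactly as in 1D. For the Riesz double sums, I would apply Lemma \ref{lemma2.8} twice: for the $x$-part, fix $j$, apply Lemma \ref{lemma2.8} to the inner 1D sum in $i$, then multiply by $h_2$ and sum over $j$; symmetrically in $y$. This gives the combined upper bound $-K_1 c_*^{(\beta)}(2L_1)^{-\beta}\|w\|^2 - K_2 c_*^{(\gamma)}(2L_2)^{-\gamma}\|w\|^2$.

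The step that needs most care is the source, because the theorem's constant is the arithmetic-mean form $\tfrac14[(2L_1)^\beta/(K_1 c_*^{(\beta)}) + (2L_2)^\gamma/(K_2 c_*^{(\gamma)})]$ rather than the harmonic-mean $1/(a_1+a_2)$ that a single Young step would produce. To recover it, after Cauchy--Schwarz I would split $\|f^{n-1+\sigma}\|\cdot\|w\|$ into two equal halves and apply Young's inequality $ab\leq\varepsilon a^2+b^2/(4\varepsilon)$ separately to each half, with $\varepsilon=K_1 c_*^{(\beta)}(2L_1)^{-\beta}$ for the first and $\varepsilon=K_2 c_*^{(\gamma)}(2L_2)^{-\gamma}$ for the second. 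The $\|w\|^2$ coefficients then sum to exactly $K_1 c_*^{(\beta)}(2L_1)^{-\beta}+K_2 c_*^{(\gamma)}(2L_2)^{-\gamma}$ and cancel the Riesz bound, producing
\begin{equation*}
\sum_{k=0}^{n-1}\hat c_k^{(n)}\bigl(\|u^{n-k}\|^2-\|u^{n-k-1}\|^2\bigr)\leq \tfrac18\Bigl[\tfrac{(2L_1)^\beta}{K_1 c_*^{(\beta)}}+\tfrac{(2L_2)^\gamma}{K_2 c_*^{(\gamma)}}\Bigr]\|f^{n-1+\sigma}\|^2.
\end{equation*}

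To close, I would rewrite this in Abel form, $\hat c_0^{(n)}\|u^n\|^2\leq \sum_{k=1}^{n-1}(\hat c_{k-1}^{(n)}-\hat c_k^{(n)})\|u^{n-k}\|^2+\hat c_{n-1}^{(n)}\|u^0\|^2+\tfrac18[\cdots]\|f^{n-1+\sigma}\|^2$, then use Lemma \ref{lemma2.6} in the form $\hat c_{n-1}^{(n)}\geq\tfrac12\sum_{r=0}^m\lambda_r/(T^{\alpha_r}\Gamma(1-\alpha_r))$ to absorb the factor $\tfrac18$ into $\hat c_{n-1}^{(n)}$ and the denominator appearing in the theorem, giving a bound of the shape $\hat c_0^{(n)}\|u^n\|^2\leq\sum_{k=1}^{n-1}(\hat c_{k-1}^{(n)}-\hat c_k^{(n)})\|u^{n-k}\|^2+\hat c_{n-1}^{(n)}\bigl(\|u^0\|^2+C\max_{1\leq l\leq n}\|f^{l-1+\sigma}\|^2\bigr)$ with the claimed $C$. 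A mathematical induction on $n$, structurally identical to the tail of Theorem \ref{th2.2}, then delivers the stated bound. The main obstacle is precisely the balanced Young's split at the source-term step: without halving the product before applying Young's, a single application yields a harmonic-mean constant that is strictly smaller and of a different algebraic form than the arithmetic-mean expression in the statement.
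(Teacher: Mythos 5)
Your proposal is correct and follows essentially the same route as the paper: the energy identity, Lemma \ref{lemma2.7} for the time sum, Lemma \ref{lemma2.8} applied direction-by-direction, the Abel rearrangement, Lemma \ref{lemma2.6} to absorb the source constant into $\hat c_{n-1}^{(n)}$, and induction; your balanced Young split is precisely the computation that justifies the paper's unexplained jump to its inequality (3.22) with the $\tfrac{1}{16}$ arithmetic-mean constant. The only quibble is your closing remark: a single Young step giving the harmonic-mean constant $\|f\|^2/(4(a_1+a_2))$ is not an obstacle, since by the AM--HM inequality it is bounded above by the stated arithmetic-mean expression, so that route proves the theorem just as well.
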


\begin{proof}
By multiplying \eqref{3.12} by $h_1h_2[\sigma u_{ij}^n+(1-\sigma)u_{ij}^{n-1}]$ and summing up $(i,j)$ with respect to $\omega$, we get
\begin{equation}\label{3.18}
 \begin{split}
\nonumber
&\sum\limits_{k=0}^{n-1}\hat{c}_k^{(n)}h_1h_2\sum\limits_{i=1}^{M_1-1}\sum\limits_{j=1}^{M_2-1}
\left(u_{ij}^{n-k}-u_{ij}^{n-k-1}\right)\left[\sigma u_{ij}^n+(1-\sigma)u_{ij}^{n-1}\right]\\
\nonumber
=&K_1h_2\sum\limits_{j=1}^{M_2-1}\big\{-h_1^{-\beta}h_1\sum\limits_{i=1}^{M_1-1}\sum\limits_{k=i-M_1}^ig_k^{(\beta)}\big[\sigma u_{i-k,j}^n\\
\nonumber
&\qquad\qquad\quad +(1-\sigma)u_{i-k,j}^{n-1}\big]\left[\sigma u_{ij}^n+(1-\sigma)u_{ij}^{n-1}\right]\big\}\\
\nonumber
&+K_2h_1\sum\limits_{i=1}^{M_1-1}\big\{-h_2^{-\gamma}h_2\sum\limits_{j=1}^{M_2-1} \sum\limits_{k=j-M_2}^jg_k^{(\gamma)}\big[\sigma u_{i,j-k}^n\\
\nonumber
&\qquad\qquad\quad +(1-\sigma)u_{i,j-k}^{n-1}\big]\left[\sigma u_{ij}^n+(1-\sigma)u_{ij}^{n-1}\right]\big\}\\
&+h_1h_2\sum\limits_{i=1}^{M_1-1}\sum\limits_{j=1}^{M_2-1}f_{ij}^{n-1+\sigma}\left[\sigma u_{ij}^n+(1-\sigma)u_{ij}^{n-1}\right].
\end{split}
\end{equation}
According to Lemma \ref{lemma2.7}, it follows that
\begin{align}\label{3.19}
\nonumber
&\sum\limits_{k=0}^{n-1}\hat{c}_k^{(n)}h_1h_2\sum\limits_{i=1}^{M_1-1}\sum\limits_{j=1}^{M_2-1}
\left(u_{ij}^{n-k}-u_{ij}^{n-k-1}\right)\left[\sigma u_{ij}^n+(1-\sigma)u_{ij}^{n-1}\right]\\
\geq&\frac{1}{2}\sum\limits_{k=0}^{n-1}\hat{c}_k^{(n)}\left(\parallel u^{n-k}\parallel^2-\parallel u^{n-k-1}\parallel^2\right).
\end{align}
Using Lemma \ref{lemma2.8}, we obtain
\begin{align}\label{3.20}
\nonumber
&-h_1^{-\beta}h_1\sum\limits_{i=1}^{M_1-1} \sum\limits_{k=i-M_1}^ig_k^{(\beta)}\left[\sigma u_{i-k,j}^n+(1-\sigma)u_{i-k,j}^{n-1}\right]\left[\sigma u_{ij}^n+(1-\sigma)u_{ij}^{n-1}\right]\\
\leq&-c_*^{(\beta)}(2L_1)^{-\beta}h_1\sum\limits_{i=1}^{M_1-1}\left[\sigma u_{ij}^n+(1-\sigma)u_{ij}^{n-1}\right]^2
\end{align}
and
\begin{align}\label{3.21}
\nonumber
&-h_2^{-\gamma}h_2\sum\limits_{j=1}^{M_2-1}\sum\limits_{k=j-M_2}^jg_k^{(\gamma)}\left[\sigma u_{i,j-k}^n+(1-\sigma)u_{i,j-k}^{n-1}\right]\left[\sigma u_{ij}^n+(1-\sigma)u_{ij}^{n-1}\right]\\
\leq&-c_*^{(\gamma)}(2L_2)^{-\gamma}h_2\sum\limits_{j=1}^{M_2-1}\left[\sigma u_{ij}^n+(1-\sigma)u_{ij}^{n-1}\right]^2.
\end{align}
By substituting \eqref{3.19}-\eqref{3.21} into \eqref{3.18}, we get
\begin{align}
\nonumber
&\frac{1}{2}\sum\limits_{k=0}^{n-1}\hat{c}_k^{(n)}\left(\parallel u^{n-k}\parallel^2-\parallel u^{n-k-1}\parallel^2\right)\\
\nonumber
\leq&-K_1c_*^{(\beta)}(2L_1)^{-\beta}h_1h_2\sum\limits_{i=1}^{M_1-1}\sum\limits_{j=1}^{M_2-1}
\left[\sigma u_{ij}^n+(1-\sigma)u_{ij}^{n-1}\right]^2\\
\nonumber
&-K_2c_*^{(\gamma)}(2L_2)^{-\gamma}h_1h_2\sum\limits_{i=1}^{M_1-1}\sum\limits_{j=1}^{M_2-1}
\left[\sigma u_{ij}^n+(1-\sigma)u_{ij}^{n-1}\right]^2\\
\nonumber
&+h_1h_2\sum\limits_{i=1}^{M_1-1}\sum\limits_{j=1}^{M_2-1}f_{ij}^{n-1+\sigma}\left[\sigma u_{ij}^n+(1-\sigma)u_{ij}^{n-1}\right]\\
\nonumber
\leq&-K_1c_*^{(\beta)}(2L_1)^{-\beta}\parallel \sigma u^n+(1-\sigma)u^{n-1}\parallel^2\\
\nonumber
&-K_2c_*^{(\gamma)}(2L_2)^{-\gamma}\parallel \sigma u^n+(1-\sigma)u^{n-1}\parallel^2\\
\nonumber
&+\parallel f^{n-1+\sigma}\parallel\cdot\parallel \sigma u^n+(1-\sigma)u^{n-1}\parallel\\
\leq&\frac{1}{16}\left[\frac{(2L_1)^{\beta}}{K_1c_*^{(\beta)}}+\frac{(2L_2)^{\gamma}}{K_2c_*^{(\gamma)}}
\right]\parallel f^{n-1+\sigma}\parallel^2,\quad 1\leq n\leq N.\label{3.22}
\end{align}

With the use of Lemma \ref{lemma2.6}, we have
\begin{align}
\hat{c}_{n-1}^{(n)}\geq
\sum\limits_{r=0}^{m}\lambda_r\frac{\tau^{-\alpha_r}}{\Gamma(2-\alpha_r)}\cdot\frac{1-\alpha_r}{2}
(n-1+\sigma)^{-\alpha_r}\geq \frac{1}{2}\sum\limits_{r=0}^{m}\frac{\lambda_r}{T^{\alpha_r}{\Gamma(1-\alpha_r)}}.\label{3.23}
\end{align}
By combining \eqref{3.22} and \eqref{3.23}, we arrive at the following inequality:
\begin{align*}
\nonumber
&\hat{c}_0^{(n)}\parallel u^{n}\parallel^2\\
\leq&\sum\limits_{k=1}^{n-1}\left(\hat{c}_{k-1}^{(n)}-\hat{c}_k^{(n)}\right)\parallel u^{n-k}\parallel^2+\hat{c}_{n-1}^{(n)}\parallel u^{0}\parallel^2
+\frac{1}{8}\left[\frac{(2L_1)^{\beta}}{K_1c_*^{(\beta)}}+\frac{(2L_2)^{\gamma}}{K_2c_*^{(\gamma)}}
\right]\parallel f^{n-1+\sigma}\parallel^2\\
\leq&\sum\limits_{k=1}^{n-1}\left(\hat{c}_{k-1}^{(n)}-\hat{c}_k^{(n)}\right)\parallel u^{n-k}\parallel^2\\
&+\hat{c}_{n-1}^{(n)}\left\{\parallel u^{0}\parallel^2
+\frac{1}{4}\left[\frac{(2L_1)^{\beta}}{K_1c_*^{(\beta)}}+\frac{(2L_2)^{\gamma}}{K_2c_*^{(\gamma)}}
\right]\frac{1}{\sum\limits_{r=0}^{m}\frac{\lambda_r}{T^{\alpha_r}{\Gamma(1-\alpha_r)}}}\parallel f^{n-1+\sigma}\parallel^2\right\},
\end{align*}
where $1\leq n\leq N$.
Applying the mathematical induction method to the above inequality, we can get the conclusion of Theorem \ref{th3.2}. This completes the proof.
\end{proof}
Now we will prove that the proposed difference scheme \eqref{3.12}-\eqref{3.14} is unconditionally convergent in $L_2$-norm with the quadratic-order accuracy in time, space and distributed-order integral variables.

Suppose that $\{U_{ij}^n~|~(i,j)\in\bar{\omega},~0\leq n\leq N\}$ is the exact solution of the system \eqref{3.1}-\eqref{3.3} and $\{u_{ij}^n~|~(i,j)\in\bar{\omega},~0\leq n\leq N\}$ is the numerical solution of the difference scheme \eqref{3.12}-\eqref{3.14}.
 Let $e_{ij}^n=U_{ij}^n-u_{ij}^n~((i,j)\in\bar{\omega},~0\leq n\leq N)$.

By subtracting \eqref{3.12}-\eqref{3.14} from \eqref{3.8}, \eqref{3.10}-\eqref{3.11}, respectively, we can get the following error equations:
\begin{align*}
&\sum\limits_{k=0}^{n-1}\hat{c}_k^{(n)}\left(e_{ij}^{n-k}-e_{ij}^{n-k-1}\right)\\
=&-K_1h_1^{-\beta}\sum\limits_{k=i-M_1}^ig_k^{(\beta)}\left[\sigma e_{i-k,j}^n+(1-\sigma)e_{i-k,j}^{n-1}\right]\\
&-K_2h_2^{-\gamma}\sum\limits_{k=j-M_2}^jg_k^{(\gamma)}\left[\sigma e_{i,j-k}^n+(1-\sigma)e_{i,j-k}^{n-1}\right]\\
&+S_{ij}^n,\quad(i,j)\in\omega,~1\leq n\leq N,\\
&e_{ij}^n=0,\quad (i,j)\in\partial\omega,~0\leq n\leq N,\\
&e_{ij}^0=0,\quad (i,j)\in\omega.
\end{align*}
Applying the conclusion of Theorem \ref{th3.2} and noticing \eqref{3.9}, we have
\begin{align*}
&\parallel e^n\parallel^2\\
\leq&\frac{1}{4}\left[\frac{(2L_1)^{\beta}}{K_1c_*^{(\beta)}}
+\frac{(2L_2)^{\gamma}}{K_2c_*^{(\gamma)}}\right]\frac{1}{\sum\limits_{r=0}^{m}
\frac{\lambda_r}{T^{\alpha_r}{\Gamma(1-\alpha_r)}}}\max\limits_{1\leq l\leq n}\parallel S^l\parallel^2\\
\leq&\frac{1}{4}\left[\frac{(2L_1)^{\beta}}{K_1c_*^{(\beta)}}
+\frac{(2L_2)^{\gamma}}{K_2c_*^{(\gamma)}}\right]\frac{1}{\sum\limits_{r=0}^{m}
\frac{\lambda_r}{T^{\alpha_r}{\Gamma(1-\alpha_r)}}}
\left[c_2\left(h_1^2+h_2^2+\tau^2+\Delta\alpha^2\right)\right]^2L_1L_2.
\end{align*}
By extracting the square root on both sides of the above equation, we acquire
\begin{align*}
\parallel e^n\parallel
\leq\frac{c_2}{2}\sqrt{\left[\frac{(2L_1)^{\beta}}{K_1c_*^{(\beta)}}
+\frac{(2L_2)^{\gamma}}{K_2c_*^{(\gamma)}}\right]\frac{L_1L_2}{\sum\limits_{r=0}^{m}
\frac{\lambda_r}{T^{\alpha_r}{\Gamma(1-\alpha_r)}}}}
\left(h_1^2+h_2^2+\tau^2+\Delta\alpha^2\right),
\end{align*}
where $1\leq n\leq N$.

Now, we arrive at the following result.
\begin{theorem}
Suppose that the continuous problem \eqref{3.1}-\eqref{3.3} has a smooth solution $u(x,y,t)\in C^{(5,5,3)}(\Omega\times[0,T])$, and let $u_{ij}^n$ be the solution of the difference scheme \eqref{3.12}-\eqref{3.14}. it holds that
\begin{align*}
\parallel e^n\parallel
\leq\frac{c_2}{2}\sqrt{\left[\frac{(2L_1)^{\beta}}{K_1c_*^{(\beta)}}
+\frac{(2L_2)^{\gamma}}{K_2c_*^{(\gamma)}}\right]\frac{L_1L_2}{\sum\limits_{r=0}^{m}
\frac{\lambda_r}{T^{\alpha_r}{\Gamma(1-\alpha_r)}}}}
\left(h_1^2+h_2^2+\tau^2+\Delta\alpha^2\right),
\end{align*}
\label{th3.3}
\end{theorem}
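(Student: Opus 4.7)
The natural route is to reduce Theorem \ref{th3.3} to an application of Theorem \ref{th3.2} on an error system. First I would set $e_{ij}^n := U_{ij}^n - u_{ij}^n$ and subtract the numerical scheme \eqref{3.12}-\eqref{3.14} from the consistency identities \eqref{3.8}, \eqref{3.10}-\eqref{3.11} satisfied by the exact solution. Because the temporal combination $\sum_k \hat{c}_k^{(n)}(\cdot)$ and both Riesz fractional centred-difference sums are linear, the subtraction produces a system of exactly the same structural form as \eqref{3.12}-\eqref{3.14}, but with the local truncation error $S_{ij}^n$ replacing the source $f_{ij}^{n-1+\sigma}$, homogeneous boundary data $e_{ij}^n=0$ on $\partial\omega$, and homogeneous initial data $e_{ij}^0=0$.

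At this point Theorem \ref{th3.2} applies verbatim to the error system, since that stability result only uses the linear structure of the scheme and the nonnegativity of the coefficients. With $\|e^0\|=0$ the initial contribution drops, yielding
\[
\|e^n\|^2 \le \frac{1}{4}\left[\frac{(2L_1)^{\beta}}{K_1 c_*^{(\beta)}} + \frac{(2L_2)^{\gamma}}{K_2 c_*^{(\gamma)}}\right]\frac{1}{\sum_{r=0}^{m}\lambda_r/(T^{\alpha_r}\Gamma(1-\alpha_r))} \max_{1\le l \le n}\|S^l\|^2.
\]
To convert the pointwise truncation bound \eqref{3.9} into an estimate on the discrete $L_2$-norm of $S^l$, I would use the definition $\|S^l\|^2 = h_1 h_2 \sum_{i=1}^{M_1-1}\sum_{j=1}^{M_2-1}(S_{ij}^l)^2$ together with the grid identity $(M_1-1)h_1 \le L_1$ and $(M_2-1)h_2 \le L_2$, which gives the uniform bound $\|S^l\|^2 \le L_1 L_2\bigl[c_2(h_1^2+h_2^2+\tau^2+\Delta\alpha^2)\bigr]^2$.

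Plugging this into the stability inequality and taking a square root produces exactly the estimate claimed in Theorem \ref{th3.3}. There is no real obstacle left to negotiate: the two heavy ingredients, the consistency bound \eqref{3.9} (which already rested on Lemmas \ref{lemma2.3} and \ref{lemma2.4} plus the $\sigma$-weighted linear interpolation in time) and the energy-type bound of Theorem \ref{th3.2}, are both in hand. The only point I would pause to verify is that the stability constant in Theorem \ref{th3.2} indeed tolerates a time-dependent source, which it does since the bound is stated with $\max_{1\le l\le n}\|f^{l-1+\sigma}\|$; substituting the truncation error for $f$ is therefore entirely legitimate and no implicit uniformity needs to be recovered.
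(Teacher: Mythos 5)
Your proposal is correct and follows essentially the same route as the paper: form the error equations by subtracting \eqref{3.12}--\eqref{3.14} from \eqref{3.8}, \eqref{3.10}--\eqref{3.11}, apply the stability bound of Theorem \ref{th3.2} with $S_{ij}^n$ in the role of the source and $e^0=0$, convert the pointwise bound \eqref{3.9} into $\|S^l\|^2\leq L_1L_2\bigl[c_2(h_1^2+h_2^2+\tau^2+\Delta\alpha^2)\bigr]^2$, and take square roots. The only detail you make explicit that the paper leaves implicit is the grid-counting step $(M_1-1)h_1\leq L_1$, $(M_2-1)h_2\leq L_2$, which is exactly what justifies the factor $L_1L_2$.
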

where $1\leq n\leq N$.

\subsection{Fast solution techniques with circulant preconditioner}\label{section3.3}
\quad Let
$$\mathbf{u}^n=(u_{1,1}^n,\cdots,u_{M_1-1,1}^n,u_{1,2}^n,\cdots,u_{M_1-1,2}^n,u_{1,M_2-1}^n,\cdots,u_{M_1-1,M_2-1}^n)^T,$$
$$\mathbf{f}^n=(f_{1,1}^n,\cdots,f_{M_1-1,1}^n,f_{1,2}^n,\cdots,f_{M_1-1,2}^n,f_{1,M_2-1}^n,\cdots,f_{M_1-1,M_2-1}^n)^T.$$
Then the implicit difference scheme \eqref{3.12} can be rewritten in the matrix form
\begin{equation}\label{3.24}
 M^{n}\mathbf{u}^{n}=p^{n-1},\quad n=1,2,\ldots,N,
\end{equation}
in which
 \begin{equation}\label{3.25}
 M^{n}=\hat{c}_0^{(n)}I_{3}+\sigma K_1h_1^{-\beta}I_2\otimes G_{\beta}+\sigma K_2h_2^{-\gamma}G_{\gamma}\otimes I_1,
 \end{equation}
 and
\begin{align*}
 p^{n-1}=&-(1-\sigma)[K_1h_1^{-\beta}I_2\otimes G_{\beta}
 +K_2h_2^{-\gamma}G_{\gamma}\otimes I_1]\mathbf{u}^{n-1}\\
 &+\sum\limits_{k=1}^{n-1}(\hat{c}_{k-1}^{(n)}-\hat{c}_k^{(n)})\mathbf{u}^{n-k}+\hat{c}_{n-1}^{(n)}u^0+\mathbf{f}^{n-1+\sigma},
 \end{align*}
where $\otimes$ denotes the Kronecker product, $I_1$, $I_2$ and $I_3$ are identity matrices with orders of $M_1-1$, $M_2-1$ and $(M_1-1)(M_2-1)$, respectively. $G_\beta\in \mathbb{R}^{(M_1-1)\times (M_1-1)}$ and $G_\gamma\in \mathbb{R}^{(M_2-1)\times (M_2-1)}$ are Toeplitz matrices and have forms as \eqref{2.31}.

The following lemma guarantees the invertibility of the coefficient matrix $M^{n}$ in \eqref{3.25}.

\begin{lemma}
 The coefficient matrix
\begin{equation*}
 M^{n}=\hat{c}_0^{(n)}I_{3}+\sigma K_1h_1^{-\beta}I_2\otimes G_{\beta}+\sigma K_2h_2^{-\gamma}G_{\gamma}\otimes I_1,
 \end{equation*}
of the linear system \eqref{3.24} is a symmetric positive definite matrix.
\label{lemma3.1}
\end{lemma}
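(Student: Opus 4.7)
The plan is to follow the template established by Lemma \ref{lemma2.9} and Lemma \ref{lemma2.10}: verify that $M^n$ is symmetric, has strictly positive diagonal entries, and is strictly diagonally dominant, from which the SPD property follows (via Gershgorin applied to the symmetric case).

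First I would check symmetry. By Lemma \ref{lemma2.5}, the generating weights satisfy $g_{-k}^{(\beta)}=g_k^{(\beta)}$ and $g_{-k}^{(\gamma)}=g_k^{(\gamma)}$, so $G_\beta$ and $G_\gamma$ are symmetric Toeplitz matrices. The identity $(A\otimes B)^T=A^T\otimes B^T$ then shows that $I_2\otimes G_\beta$ and $G_\gamma\otimes I_1$ are symmetric, and adding the symmetric multiple $\hat{c}_0^{(n)}I_3$ preserves symmetry, so $M^n$ is symmetric.

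Next I would verify strict diagonal dominance row by row. Ordering the unknowns lexicographically as in $\mathbf{u}^n$, the row of $M^n$ indexed by $(i,j)\in\omega$ has diagonal entry $\hat{c}_0^{(n)}+\sigma K_1 h_1^{-\beta}g_0^{(\beta)}+\sigma K_2 h_2^{-\gamma}g_0^{(\gamma)}$, which is strictly positive since $\hat{c}_0^{(n)}>0$ (a consequence of Lemma \ref{lemma2.6}) and $g_0^{(\beta)},g_0^{(\gamma)}>0$ (Lemma \ref{lemma2.5}). The off-diagonal entries decouple into two disjoint groups: contributions $\sigma K_1 h_1^{-\beta}g_{i-i'}^{(\beta)}$ at columns $(i',j)$ with $i'\ne i$, coming from $I_2\otimes G_\beta$, and contributions $\sigma K_2 h_2^{-\gamma}g_{j-j'}^{(\gamma)}$ at columns $(i,j')$ with $j'\ne j$, coming from $G_\gamma\otimes I_1$. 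Because $g_k^{(\beta)}\le 0$ for $k\ne 0$, summing absolute values reduces to two independent one-dimensional estimates via the bound $-\sum_{k=-M+i,\,k\ne 0}^{i}g_k^{(\beta)}\le g_0^{(\beta)}$ of Lemma \ref{lemma2.5} (and analogously for $\gamma$). Restricting the sum to interior indices only strengthens the inequality, since the discarded endpoint terms are themselves nonpositive.

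Combining these pieces, the diagonal excess over the sum of off-diagonal magnitudes in each row is at least $\hat{c}_0^{(n)}>0$. Since $M^n$ is real symmetric with positive diagonal and strictly diagonally dominant, Gershgorin's theorem forces every eigenvalue to lie in a disk contained in the open right half-line, so $M^n$ is SPD. The main delicate point I anticipate is keeping the multi-index bookkeeping correct when identifying which entries of the Kronecker-product terms land on the diagonal versus off the diagonal; once the row structure is written out carefully, the rest of the estimate mirrors Lemma \ref{lemma2.9} applied independently in the $x$- and $y$-directions.
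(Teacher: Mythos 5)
Your proof is correct, but it takes a different route from the paper's. The paper argues structurally: it first asserts (from Lemma \ref{lemma2.5} and the one-dimensional analysis) that $G_{\beta}$ and $G_{\gamma}$ are themselves symmetric positive definite, then notes that $I_2\otimes G_{\beta}$ and $G_{\gamma}\otimes I_1$ inherit this from the Kronecker product with an identity, and finally that adding $\hat{c}_0^{(n)}I_3$ with $\hat{c}_0^{(n)}>0$ keeps the sum symmetric positive definite. You instead work directly on the assembled two-dimensional matrix with a row-by-row strict diagonal dominance estimate, i.e.\ you transplant the proof of Lemma \ref{lemma2.9} to the 2D setting. The paper's route is shorter and reuses the 1D result as a black box, but it leans on the claim that $G_{\beta}$ alone is \emph{strictly} positive definite, which does not follow from the weak bound $-\sum_{k\neq 0}g_k^{(\beta)}\leq g_0^{(\beta)}$ of Lemma \ref{lemma2.5} without an extra observation (e.g.\ strictness of the inequality coming from $\sum_{k=-\infty}^{\infty}g_k^{(\beta)}=0$ together with the omitted tail terms being nonzero, or an irreducibility argument; in fact positive semi-definiteness of $G_{\beta}$ and $G_{\gamma}$ would already suffice for the paper's conclusion since $\hat{c}_0^{(n)}I_3$ is positive definite). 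Your argument sidesteps that subtlety entirely, because the $\hat{c}_0^{(n)}I_3$ shift is what supplies the strict gap in every Gershgorin interval; the price is the multi-index bookkeeping you flag, which you have handled correctly: the block-diagonal term $I_2\otimes G_{\beta}$ contributes only within a fixed $j$-slice and $G_{\gamma}\otimes I_1$ only within a fixed $i$-slice, the two off-diagonal families are disjoint, and each is controlled by the corresponding one-dimensional bound from Lemma \ref{lemma2.5}.
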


\begin{proof}
According to Lemma \ref{lemma2.5} and the definitions of the matrices $G_\beta$ and $G_{\gamma}$, one can prove that $G_{\beta}$ and $G_{\gamma}$ are symmetric positive definite matrices. Therefore, the matrices $I_2\otimes G_{\beta}$ and $G_{\gamma}\otimes I_1$ are symmetric positive definite matrices. Given that $\hat{c}_0^{(n)}>0$ and $K>0$, it is easy to show that the matrix $M^{n}$, which is defined by \eqref{3.25}, is also a symmetric positive definite matrix.
\end{proof}
We also use the CG method for solving the linear system \eqref{3.24}. In order to improve the performance
and reliability of the CG method, the preconditioning techniques are exploited. We refer to the
coefficient matrix $M^{n}$ as a block Toeplitz matrix with Toeplitz blocks (BTTB) \cite{chan2007introduction},
Therefore the following level-2 circulant preconditioner which is a block circulant matrix with circulant blocks (BCCB) is considered:
\begin{equation*}
 C_2^{n}=\hat{c}_0^{(n)}I_{3}+\sigma K_1h_1^{-\beta}I_2\otimes c(G_{\beta})+\sigma K_2h_2^{-\gamma}c(G_{\gamma})\otimes I_1.
 \end{equation*}

Similarly, we discuss the properties of the circulant preconditioner $C_2^{n}$ as follows.

\begin{lemma}
 The level-2 circulant preconditioner
\begin{equation*}
 C_2^{n}=\hat{c}_0^{(n)}I_{3}+\sigma K_1h_1^{-\beta}I_2\otimes c(G_{\beta})+\sigma K_2h_2^{-\gamma}c(G_{\gamma})\otimes I_1.
\end{equation*}
is a symmetric positive definite matrix.
\label{lemma3.2}
\end{lemma}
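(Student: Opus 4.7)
The plan is to parallel the one-dimensional argument of Lemma \ref{lemma2.10}, but to take advantage of the Kronecker-sum structure of $C_2^n$ rather than computing row sums of the whole $(M_1-1)(M_2-1)\times(M_1-1)(M_2-1)$ matrix. Concretely, I would first establish that each of the circulant blocks $c(G_\beta)$ and $c(G_\gamma)$ is symmetric positive semidefinite, and then assemble the full preconditioner.

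For symmetry of $c(G_\beta)$, note that the first column entries are $r_0=g_0^{(\beta)}$ and $r_k=g_k^{(\beta)}+g_{k-(M_1-1)}^{(\beta)}$ for $1\le k\le M_1-2$. Using $g_{-j}^{(\beta)}=g_j^{(\beta)}$ from Lemma \ref{lemma2.5}, one checks directly that $r_k=r_{M_1-1-k}$, which is exactly the condition for a circulant matrix to be symmetric. The identical calculation applies to $c(G_\gamma)$.

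For positive semidefiniteness, I would invoke Gershgorin on the symmetric circulant $c(G_\beta)$. By Lemma \ref{lemma2.5}, the diagonal entry $g_0^{(\beta)}$ is non-negative and every off-diagonal $r_k$ is non-positive, so the row sum controls the Gershgorin bound: each eigenvalue is bounded below by
\begin{equation*}
g_0^{(\beta)}+\sum_{k=1}^{M_1-2}\bigl(g_k^{(\beta)}+g_{k-(M_1-1)}^{(\beta)}\bigr)=\sum_{k=-(M_1-2)}^{M_1-2}g_k^{(\beta)}.
\end{equation*}
Since $\sum_{k\in\mathbb{Z}}g_k^{(\beta)}=0$ and the truncated tail terms are all non-positive, this partial sum is non-negative, so $c(G_\beta)\succeq 0$; the same argument gives $c(G_\gamma)\succeq 0$. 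Because symmetry and positive semidefiniteness are preserved by Kronecker products with identities, both $I_2\otimes c(G_\beta)$ and $c(G_\gamma)\otimes I_1$ are symmetric positive semidefinite.

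Finally, I would finish by writing $C_2^n=\hat{c}_0^{(n)}I_3+\sigma K_1 h_1^{-\beta}\bigl(I_2\otimes c(G_\beta)\bigr)+\sigma K_2 h_2^{-\gamma}\bigl(c(G_\gamma)\otimes I_1\bigr)$ as the sum of a strictly positive multiple of the identity (since $\hat{c}_0^{(n)}>0$ by Lemma \ref{lemma2.6}) and two symmetric positive semidefinite matrices with positive coefficients, which is therefore symmetric and positive definite. The main obstacle is the bookkeeping needed to justify $\sum_{k=-(M_1-2)}^{M_1-2}g_k^{(\beta)}\ge 0$ cleanly from the facts packaged in Lemma \ref{lemma2.5}; as a fallback, one can imitate the strict diagonal dominance computation from Lemma \ref{lemma2.10} entrywise on $C_2^n$, where the two blocks' off-diagonal contributions decouple by the Kronecker structure and the strict inequality still comes from the $\hat{c}_0^{(n)}I_3$ term.
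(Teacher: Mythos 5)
Your proof is correct and takes essentially the same route as the paper's, which likewise establishes symmetry and (semi)definiteness of the circulant blocks via the diagonal-dominance argument of Lemma~\ref{lemma2.10} and then assembles $C_2^{n}$ through the Kronecker structure exactly as in Lemma~\ref{lemma3.1}. If anything your version is slightly more careful: you claim only positive semidefiniteness for $c(G_{\beta})$ and $c(G_{\gamma})$ and draw the strict positivity from the $\hat{c}_0^{(n)}I_{3}$ term, which is the defensible statement at the endpoint $\beta=2$, where the truncated sum $\sum_{k=-(M_1-2)}^{M_1-2}g_k^{(\beta)}$ vanishes and $c(G_{\beta})$ is singular.
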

\begin{proof}
According to the proof of Lemma \ref{lemma2.10},  it is easy to see that $c(G_{\beta})$ and $c(G_{\gamma})$ are symmetric positive definite matrices. Then, as similar to the proof of Lemma \ref{lemma3.1}, we can prove that the level-2 circulant preconditioner $C_2^{n}$ is a symmetric positive definite matrix.
\end{proof}
According to Lemma \ref{lemma3.2}, we can know that the preconditioner $C_2^n$ is nonsingular. Theoretically, for the BCCB matrix $C_2^{n}$, the spectrum of $(C_2^{n})^{-1}M^{n}$ is clustered around 1 except for at most $\mathcal{O}(M_1-1)+\mathcal{O}(M_2-1)$ outlying eigenvalues \cite{chan2007introduction}. When the PCG method is used to solve \eqref{3.24}, the convergence rate will be fast. In Section \ref{section4}, we will also present numerical examples to demonstrate the usefulness of the proposed circulant preconditioning $C_2^n$. Thus, the total complexity of the PCG method with preconditioner $C_2^{n}$ for solving the \eqref{3.24} remains $\mathcal{O}((M_1-1)(M_2-1)\log(M_1-1)(M_2-1))$.

\begin{figure}[!hbt]
  \centering
  \subfigure{
    \includegraphics[width = 6 cm]{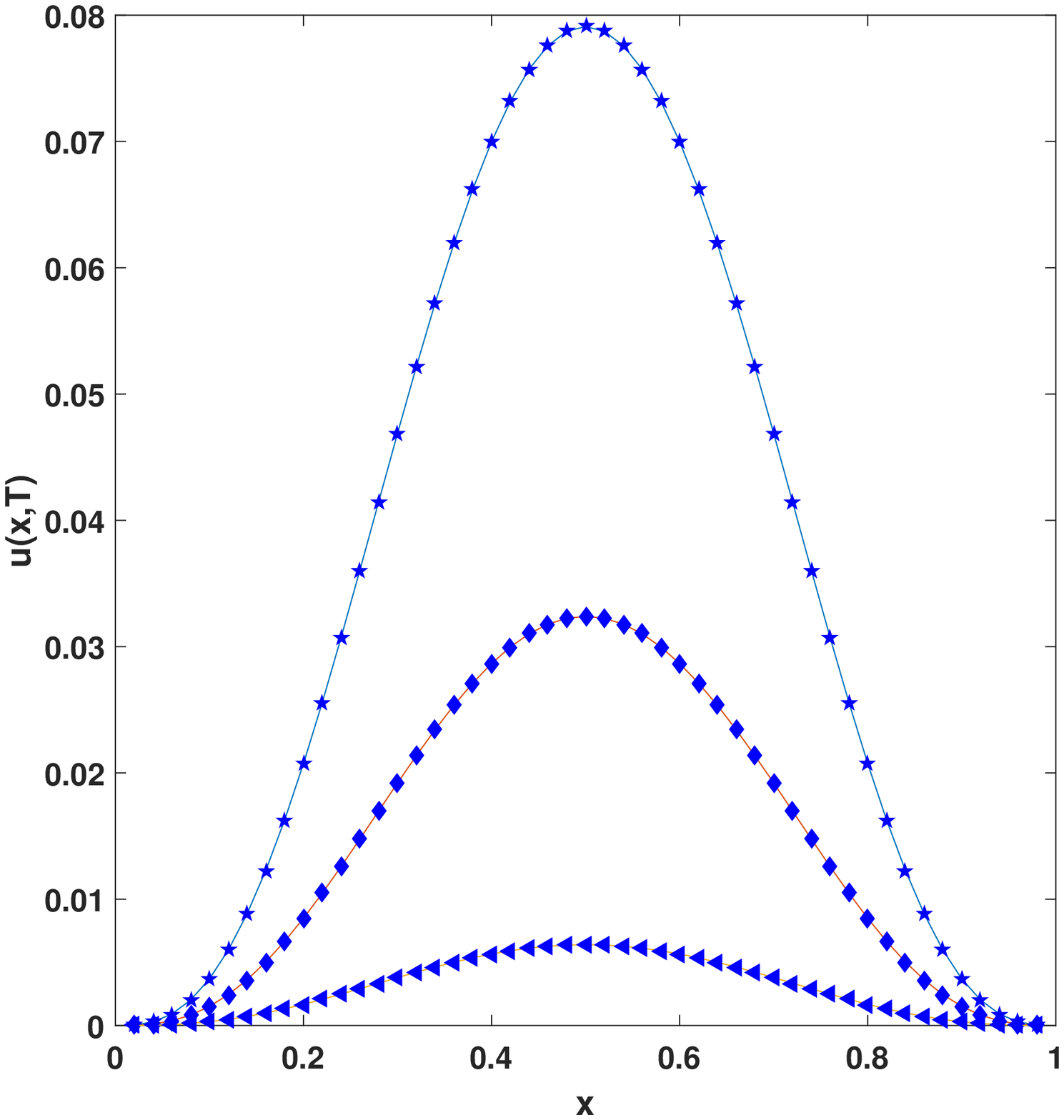}}
     \hspace{1 cm}
  \subfigure{
    \includegraphics[width = 6 cm]{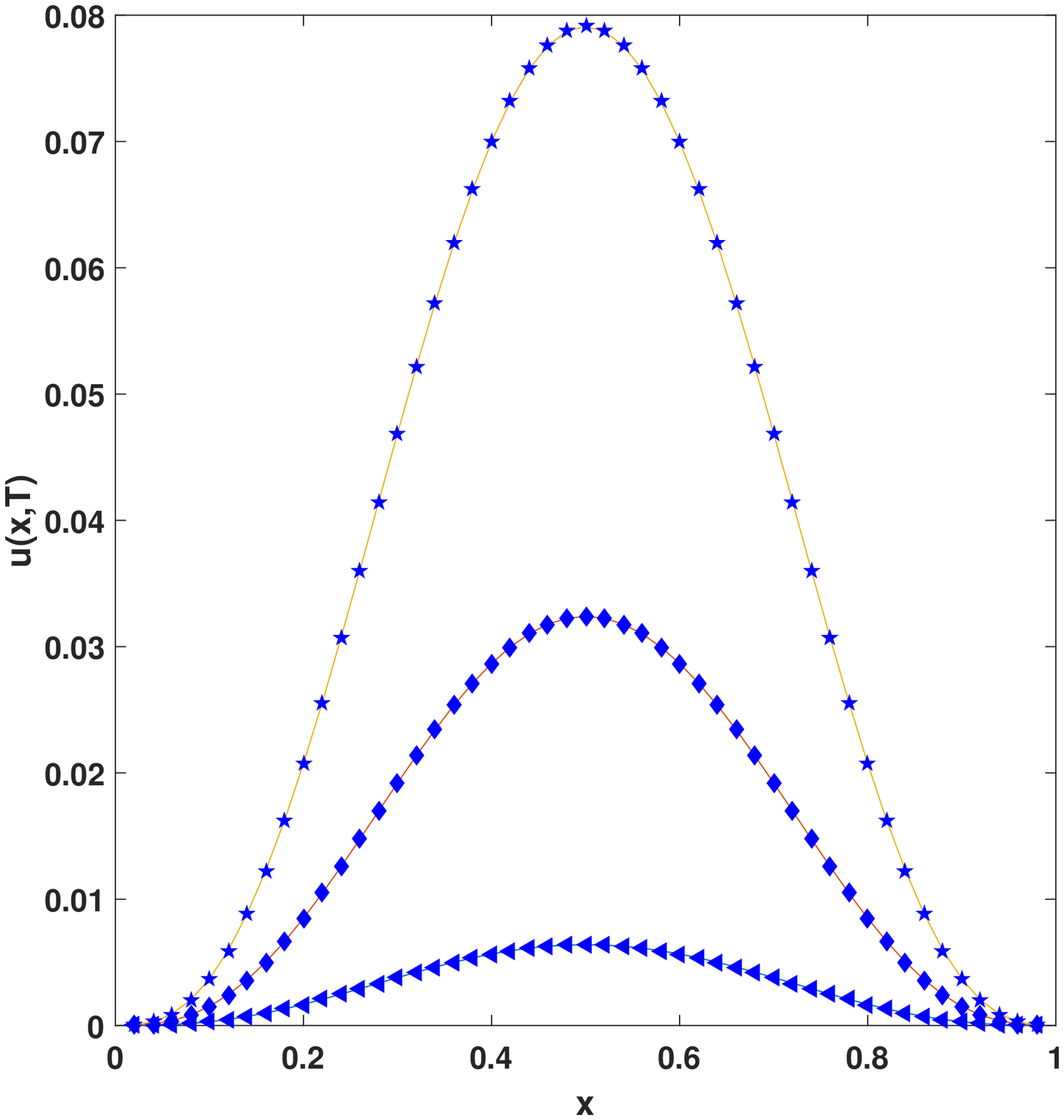}}\\
    (\textbf{a}) $\beta$ = 1.3 \hspace{5cm} (\textbf{b}) $\beta$ = 1.8 \\
  \caption{Exact solutions (lines) and numerical solutions (symbols) of Example \ref{example1}: (\textbf{a}) $\beta$ = 1.3 at $T$ = 1.5 (stars), 1.2 (rhombus), 0.8 (triangles); (\textbf{b}) $\beta$ = 1.8 at $T$ = 1.5 (stars), 1.2 (rhombus), 0.8 (triangles).}
  \label{fig1}
\end{figure}
\begin{table}[t]\small\tabcolsep=5.3pt
\begin{center}
\caption{{\small {Maximum errors and spatial convergence orders of difference scheme \eqref{2.17}-\eqref{2.19} for Example \ref{example1} with $T$ = 1.5; $J$ = 50; $N$ = 1000.}}}
\begin{tabular}{ccccccc}
\hline  & \multicolumn{2}{c}{$\beta=1.2$} & \multicolumn{2}
{c}{$\beta=1.5$} & \multicolumn{2}{c}{$\beta=1.8$} \\
[-2pt] \cmidrule(lr){2-3} \cmidrule(lr){4-5} \cmidrule(lr){6-7} \\ [-11pt]
 $M$ & $e(h,\tau,\Delta\alpha)$ & $rate_h$ & $e(h,\tau,\Delta\alpha)$ &$rate_h$ & $e(h,\tau,\Delta\alpha)$ & $rate_h$\\
\hline
32 & 3.423357e-05 &  -    & 6.057253e-05 &   -    & 9.145302e-05 &   -   \\
64 & 8.665990e-06 &1.9820 & 1.538522e-05 & 1.9771 & 2.313195e-05 & 1.9831 \\
128& 2.165532e-06 &2.0006 & 3.847714e-06 & 1.9995 & 5.789457e-06 & 1.9984 \\
256& 5.410059e-07 &2.0010 & 9.617429e-07 & 2.0003 & 1.449718e-06 & 1.9977 \\
512& 1.353449e-07 &1.9990 & 2.404824e-07 & 1.9997 & 3.637954e-07 & 1.9946 \\
\hline
\end{tabular}
\label{tab1}
\end{center}
\end{table}

\section{Numerical example}\label{section4}
\quad In this section, we carry out numerical examples to demonstrate the second-order accuracy of the proposed difference schemes and the computational efficiency of the preconditioned Krylov subspace methods. At each time level, we employ the Cholesky method, the CG method and the PCG method for solving the resultant linear systems, respectively. The initial guess for all method is chosen as the zero vector and the stopping criterion is ${\|r^{(k)}\|_2}/{\|r^{(0)}\|_2} < 10^{-12}$, where $r^{(k)}$ is the residual vector after $k$ iterations. Number of iterations required for convergence and CPU time of each method are reported. All numerical experiments are performed in MATLAB (R2016a) on a desktop with 16GB RAM, Inter (R) Core (TM) i5-4590 CPU, @3.30GHz.

In Tables 4 and 8, ``CPU(s)" denotes the total CPU time in seconds to solve the linear systems, and ``Iter" denotes the average number of iterations over 10 runs. For the PCG method, we also report the Strang-based circulant preconditioner \cite{lei2013circulant} $S^n$ and the T. Chan-based circulant preconditioner \cite{chan2007introduction} $T^n$. Among them, the circulant preconditioner $S^n$ is shown below, and the circulant preconditioner $T^n$ takes the same form except that we replace the $S$ with $T$.
\begin{equation*}
 S^{n}=\hat{c}_0^{(n)}I+\sigma Kh^{-\beta}s(G)
 \end{equation*}
 and
 \begin{equation*}
 S_2^{n}=\hat{c}_0^{(n)}I_{3}+\sigma K_1h_1^{-\beta}I_2\otimes s(G_{\beta})+\sigma K_2h_2^{-\gamma}s(G_{\gamma})\otimes I_1,
 \end{equation*}
 where $s(\cdot)$ denotes the Strang circulant preconditioner for the Toeplitz matrix.
 More precisely, the first column of the criculant matrix $s(G_\beta)$ is given by
 \begin{center}
$\left(
  \begin{array}{c}
    g_0^{(\beta)} \\
   g_1^{(\beta)} \\
   g_2^{(\beta)} \\
    \vdots \\
    g_{\lfloor\frac{M}{2}\rfloor-1}^{(\beta)}\\
    g_{\lfloor\frac{M}{2}\rfloor+1-M}^{(\beta)}\\
    \vdots \\
    g_{-2}^{(\beta)} \\ \\
    g_{-1}^{(\beta)} \\ \\
  \end{array}
\right)$.
 \end{center}

\begin{table}[t]\small\tabcolsep=5.3pt
\begin{center}
\caption{{\small {Maximum errors and temporal convergence orders of difference scheme \eqref{2.17}-\eqref{2.19} for Example \ref{example1} with $T$ = 1.5; $J$ = 50; $M$ = 1000.}}}
\begin{tabular}{ccccccc}
\hline  & \multicolumn{2}{c}{$\beta=1.2$} & \multicolumn{2}
{c}{$\beta=1.5$} & \multicolumn{2}{c}{$\beta=1.8$} \\
[-2pt] \cmidrule(lr){2-3} \cmidrule(lr){4-5} \cmidrule(lr){6-7} \\ [-11pt]
 $N$  & $e(h,\tau,\Delta\alpha)$ & $rate_\tau$ & $e(h,\tau,\Delta\alpha)$ &$rate_\tau$ & $e(h,\tau,\Delta\alpha)$ & $rate_\tau$\\
\hline
8 & 5.113611e-04 &   -  & 6.193316e-04&   -  & 7.401543e-04&   -   \\
16& 1.294286e-04 &1.9822& 1.590357e-04&1.9614& 1.921523e-04&1.9456 \\
32& 3.228485e-05 &2.0032& 4.014123e-05&1.9862& 4.892436e-05&1.9736 \\
64& 8.019176e-06 &2.0093& 1.004941e-05&1.9980& 1.231641e-05&1.9900 \\
128&2.005007e-06 &1.9998& 2.508026e-06&2.0025& 3.067275e-06&2.0056 \\
\hline
\end{tabular}
\label{tab2}
\end{center}
\end{table}
\begin{table}[t]\small\tabcolsep=5.3pt
\begin{center}
\caption{{\small {Maximum errors and distributed-order integral convergence orders of difference scheme \eqref{2.17}-\eqref{2.19} for Example \ref{example1} with $T$ = 1.5; $M$ = 2000; $N$ = 2000.}}}
\begin{tabular}{ccccccc}
\hline  & \multicolumn{2}{c}{$\beta=1.2$} & \multicolumn{2}
{c}{$\beta=1.5$} & \multicolumn{2}{c}{$\beta=1.8$} \\
[-2pt] \cmidrule(lr){2-3} \cmidrule(lr){4-5} \cmidrule(lr){6-7} \\ [-11pt]
 $J$  & $e(h,\tau,\Delta\alpha)$ & $rate_{\Delta\alpha}$ & $e(h,\tau,\Delta\alpha)$ &$rate_{\Delta\alpha}$ & $e(h,\tau,\Delta\alpha)$ & $rate_{\Delta\alpha}$\\
\hline
2 & 3.774623e-05 &   -  & 3.492953e-05&   -  & 3.152437e-05&   -   \\
4 & 9.457965e-06 &1.9967& 8.747785e-06&1.9975& 7.889510e-06&1.9985 \\
8 & 2.366072e-06 &1.9990& 2.185762e-06&2.0008& 1.967605e-06&2.0035 \\
16& 5.917841e-07 &1.9994& 5.441518e-07&2.0061& 4.862444e-07&2.0167 \\
32& 1.480919e-07 &1.9986& 1.336461e-07&2.0256& 1.158196e-07&2.0698 \\
\hline
\end{tabular}
\label{tab3}
\end{center}
\end{table}

In the following tables, we use ``Chol" as the Cholesky method, ``PCG(S)" as the PCG with the Strang-based preconditioner, ``PCG(T)" as the PCG with the T. Chan-based circulant preconditioner, and ``PCG(C)" as the PCG with the proposed circulant preconditioner.

\begin{example}\label{example1}
Consider the following one-dimensional time distributed-order and Riesz space fractional diffusion problem:
$$\begin{cases}
\int_0^1\Gamma(5-\alpha){}^C_0D_t^{\alpha}u(x,t)d\alpha=\frac{\partial^{\beta}u(x,t)}{\partial\mid
x\mid^{\beta}}+f(x,t),\quad 0<x<1,~0<t\leq T,\\
 u(0,t)=0,\;u(1,t)=0,\quad 0\leq t\leq T,\\
  u(x,0)=0,\quad 0<x<1,
 \end{cases}$$
with $$f(x,t)=f_0(x,t)-ct^4\left[f_1(x,t)-3f_2(x,t)+3f_3(x,t)-f_4(x,t)\right],$$
where $c=-\frac{1}{2\cos(\beta\pi/2)}$, and
\begin{align*}
&f_0(x,t)=24t^3(t-1)x^3(1-x)^3/\ln t,\\
&f_1(x,t)={\Gamma(4)}/{\Gamma(4-\beta)}[x^{3-\beta}+(1-x)^{3-\beta}],\\
&f_2(x,t)={\Gamma(5)}/{\Gamma(5-\beta)}[x^{4-\beta}+(1-x)^{4-\beta}],\\
&f_3(x,t)={\Gamma(6)}/{\Gamma(6-\beta)}[x^{5-\beta}+(1-x)^{5-\beta}],\\
&f_4(x,t)={\Gamma(7)}/{\Gamma(7-\beta)}[x^{6-\beta}+(1-x)^{6-\beta}].\\
\end{align*}
The exact solution of this example is given by $u(x,t)=t^4x^3(1-x)^3$.
\end{example}

Let $e(h,\tau,\Delta\alpha)=\max\limits_{0\leq i\leq M\atop0\leq n\leq N}|u(x_i,t_n,\Delta\alpha)-u_i^n|$, where $u(x_i,t_n,\Delta\alpha)$ and $u_i^n$ are the exact solution and numerical solution with the step sizes $h$, $\tau$ and $\Delta\alpha$, respectively. We define the convergence orders as
$$rate_h=\log_2\frac{e(h,\tau,\Delta\alpha)}{e(h/2,\tau,\Delta\alpha)},\quad
rate_{\tau}=\log_2\frac{e(h,\tau,\Delta\alpha)}{e(h,\tau/2,\Delta\alpha)},\quad
rate_{\Delta\alpha}=\log_2\frac{e(h,\tau,\Delta\alpha)}{e(h,\tau,{\Delta\alpha}/2)}.$$

We take $J=50$, $M=50$, $N=50$. Fig. \ref{fig1} shows a comparison between the exact solutions and numerical solutions of the difference scheme \eqref{2.17}-\eqref{2.19} when solving Example \ref{example1} with different $\beta$ and $T$. The good agreement between numerical solutions with the exact solutions can be clearly seen.

\begin{table}[t]\small\tabcolsep=6.0pt
\begin{center}
\caption{{\small {Comparisons on Example \ref{example1} between the Cholesky method, the CG method, and the PCG method with different circulant preconditioners, where $\beta$ = 1.2, 1.5 and 1.8, $J$ = 50 and $T$ = 1.5.}}}
\begin{tabular}{cccccccccccc}
\\
\hline & & &\multicolumn{1}{c}{$\rm{Chol}$} & \multicolumn{2}
{c}{$\rm{CG}$}& \multicolumn{2}{c}{$\rm{PCG(S)}$} & \multicolumn{2}
{c}{$\rm{PCG(T)}$} & \multicolumn{2}{c}{$\rm{PCG(C)}$} \\
[-2pt] \cmidrule(lr){4-4} \cmidrule(lr){5-6} \cmidrule(lr){7-8} \cmidrule(lr){9-10} \cmidrule(lr){11-12} \\ [-11pt]
 $\beta$  & $M$ & $N$ & $\rm{CPU(s)}$ & $\rm{CPU(s)}$ & $\rm{Iter}$ & $\rm{CPU(s)}$ & $\rm{Iter}$ & $\rm{CPU(s)}$ & $\rm{Iter}$ & $\rm{CPU(s)}$ & $\rm{Iter}$\\
\hline
    &$2^6$ &$2^4$ &0.01  &0.01 &24.0 &0.01 &7.0 &0.01 &7.9 &\textbf{0.01} &\textbf{6.0} \\
    &$2^7$ &$2^5$ &0.02  &0.03 &32.0 &0.02 &7.0 &0.02 &7.9 &\textbf{0.02} &\textbf{6.0} \\
    &$2^8$ &$2^6$ &0.09  &0.11 &40.0 &0.05 &7.0 &0.05 &7.0 &\textbf{0.05} &\textbf{6.0} \\
1.2 &$2^9$ &$2^7$ &0.52  &0.58 &47.0 &0.20 &6.0 &0.22 &7.0 &\textbf{0.20} &\textbf{6.0} \\
    &$2^{10}$&$2^8$&4.00 &1.58 &53.0 &0.45 &6.0 &0.49 &7.0 &\textbf{0.49} &\textbf{7.0} \\
    &$2^{11}$&$2^9$&35.88&10.52&60.0 &2.79 &7.0 &2.79 &7.0 &\textbf{2.80} &\textbf{7.0} \\
  \\
    &$2^6$ &$2^4$ &0.00  &0.01 &31.6 &0.01 &8.0 &0.01 &9.0 &\textbf{0.01} &\textbf{6.0} \\
    &$2^7$ &$2^5$ &0.02  &0.05 &53.0 &0.02 &8.0 &0.02 &9.9 &\textbf{0.02} &\textbf{6.0} \\
    &$2^8$ &$2^6$ &0.08  &0.19 &83.0 &0.05 &7.0 &0.06 &10.0&\textbf{0.05} &\textbf{6.0} \\
1.5 &$2^9$ &$2^7$ &0.51  &1.32 &122.9&0.22 &7.0 &0.26 &10.0&\textbf{0.22} &\textbf{7.0} \\
    &$2^{10}$&$2^8$&4.00 &4.47 &167.0&0.49 &7.0 &0.58 &9.0 &\textbf{0.49} &\textbf{7.0} \\
    &$2^{11}$&$2^9$&35.48&34.27&214.0&2.80 &7.0 &3.28 &9.0 &\textbf{2.80} &\textbf{7.0} \\
 \\
    &$2^6$ &$2^4$ &0.00  &0.01 &32.0 &0.01 &8.0 &0.01 &11.5&\textbf{0.01} &\textbf{6.0}  \\
    &$2^7$ &$2^5$ &0.02  &0.05 &64.0 &0.02 &8.0 &0.03 &13.0&\textbf{0.02} &\textbf{6.0} \\
    &$2^8$ &$2^6$ &0.08  &0.27 &122.0&0.06 &8.0 &0.07 &14.0&\textbf{0.05} &\textbf{6.0} \\
1.8 &$2^9$ &$2^7$ &0.51  &2.30 &224.0&0.23 &8.0 &0.33 &15.0&\textbf{0.23} &\textbf{7.9} \\
    &$2^{10}$&$2^8$&4.00 &10.43&402.0&0.53 &8.0 &0.85 &16.0&\textbf{0.53} &\textbf{8.0} \\
    &$2^{11}$&$2^9$&35.66&107.08&684.0&3.04&8.0 &5.26 &17.1&\textbf{3.04} &\textbf{8.0} \\
\hline
\end{tabular}
\label{tab4}
\end{center}
\end{table}

\begin{figure}[!hbt]
  \centering
  \subfigure{
    \includegraphics[width = 6.7 cm]{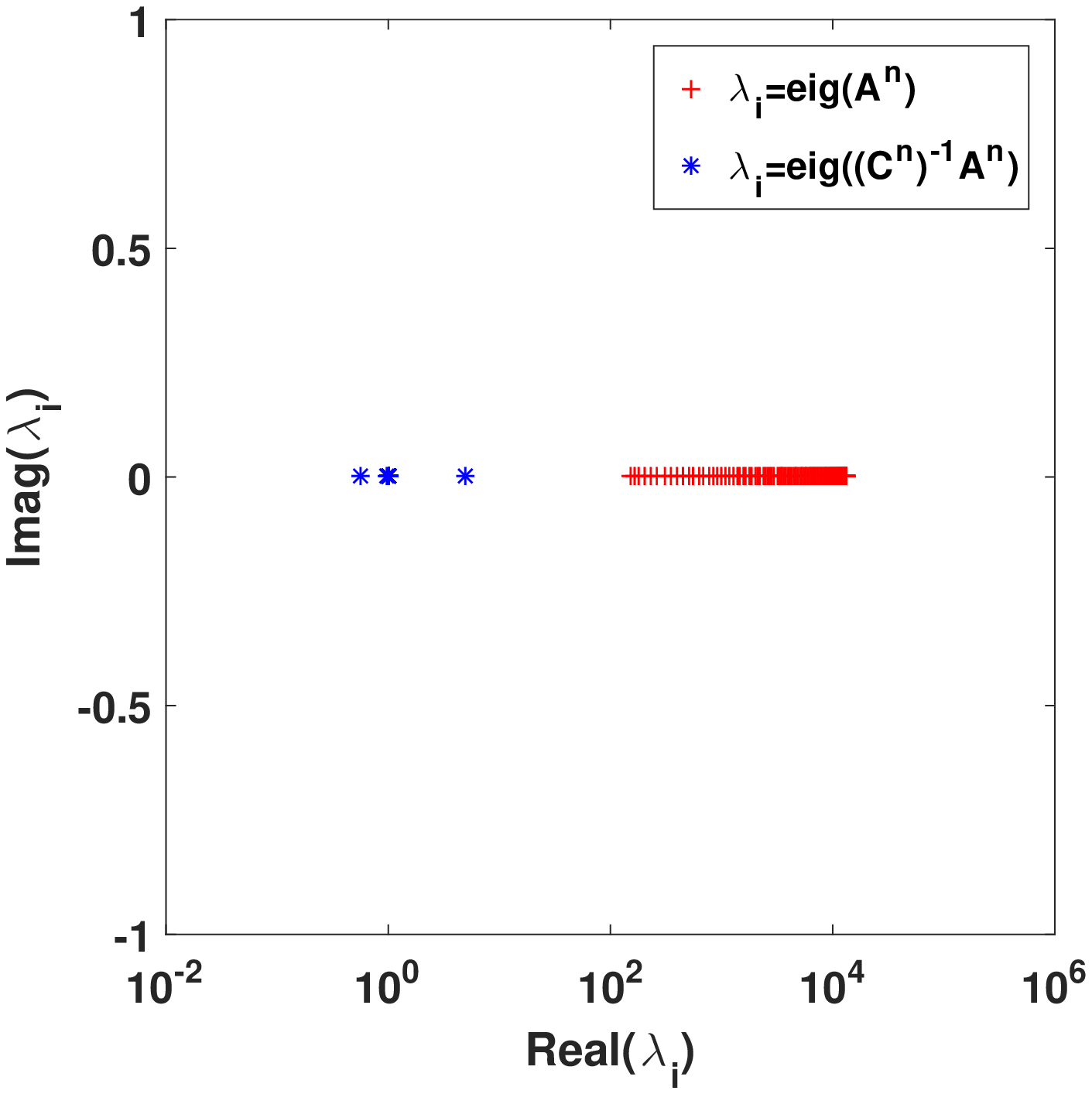}}
     \hspace{1 cm}
  \subfigure{
    \includegraphics[width = 6.7 cm]{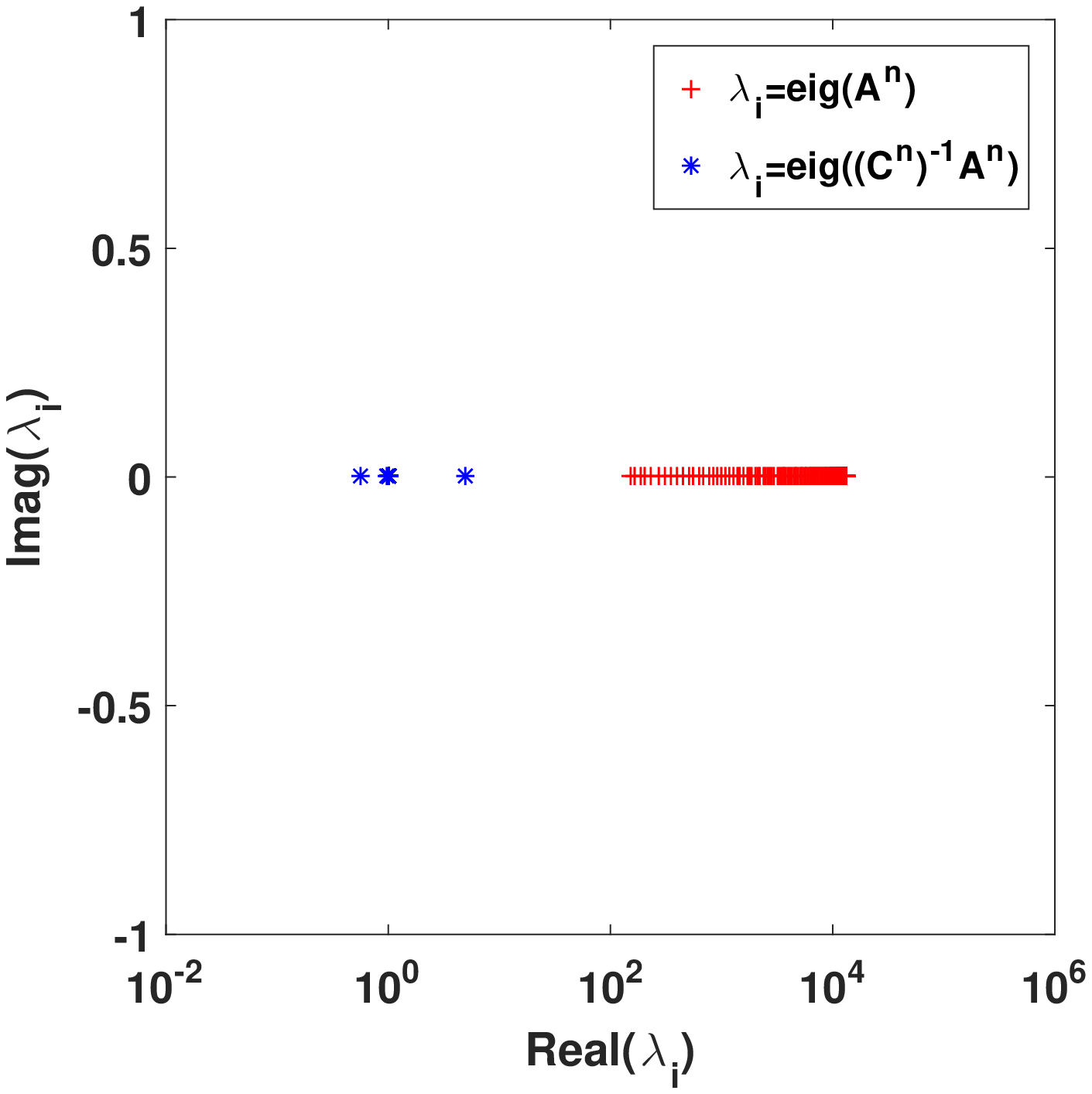}}\\
    (\textbf{a}) $n$=0 \hspace{5.8cm} (\textbf{b}) $n$ = 1 \\
  \caption{Spectrum of original matrice (red) and R. Chan-based preconditioned matrice (blue) for Example \ref{example1} at time level (\textbf{a}) $n=0$ and (\textbf{b}) $n=1$, respectively, when $M$ = $N$ = 128, $J$ = 50, $\beta$ = 1.8, and $T$ = 1.5.}
  \label{fig2}
\end{figure}

\begin{figure}[!hbt]
  \centering
  \subfigure{
    \includegraphics[width = 6.7 cm]{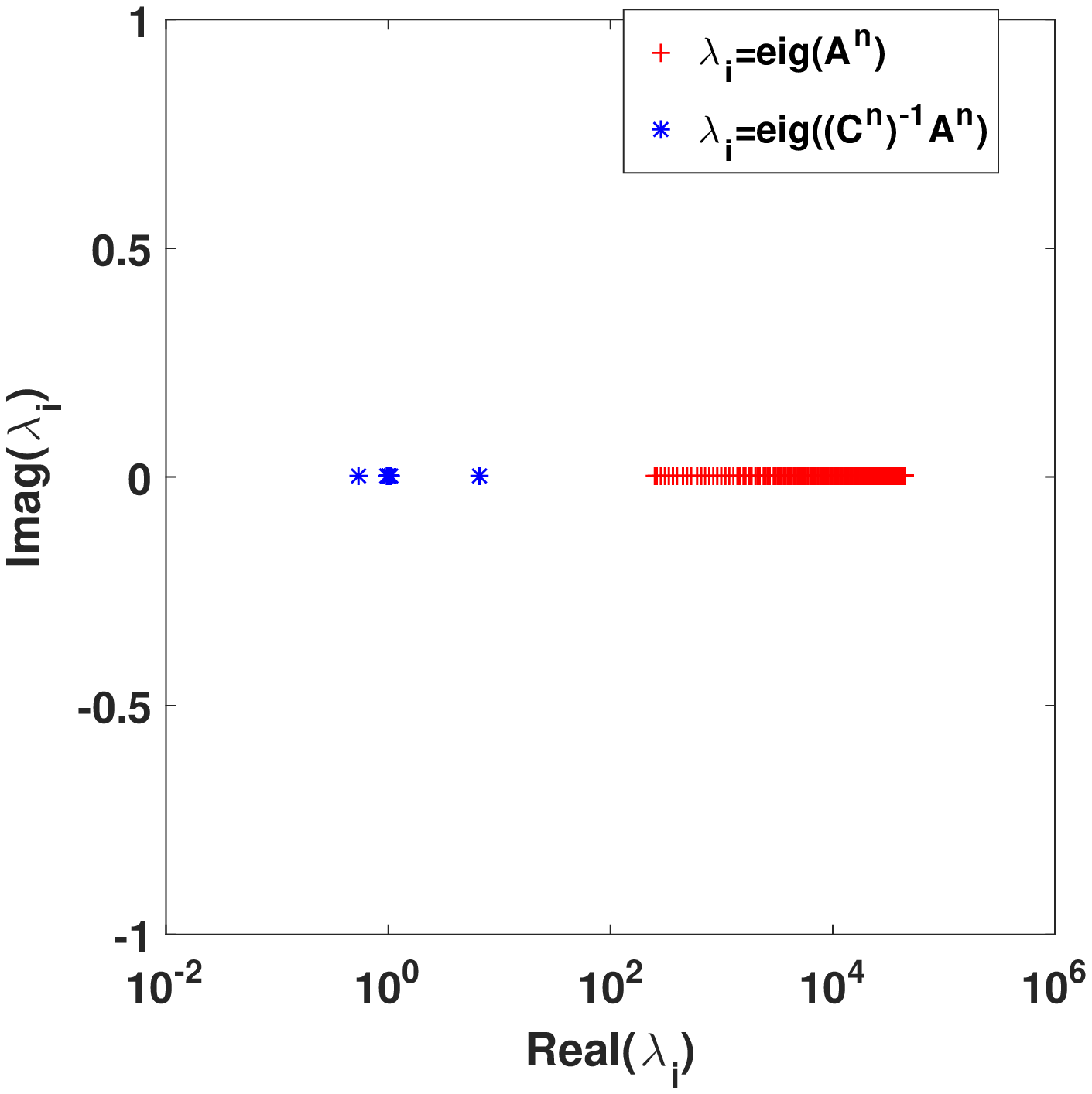}}
     \hspace{1 cm}
  \subfigure{
    \includegraphics[width = 6.7 cm]{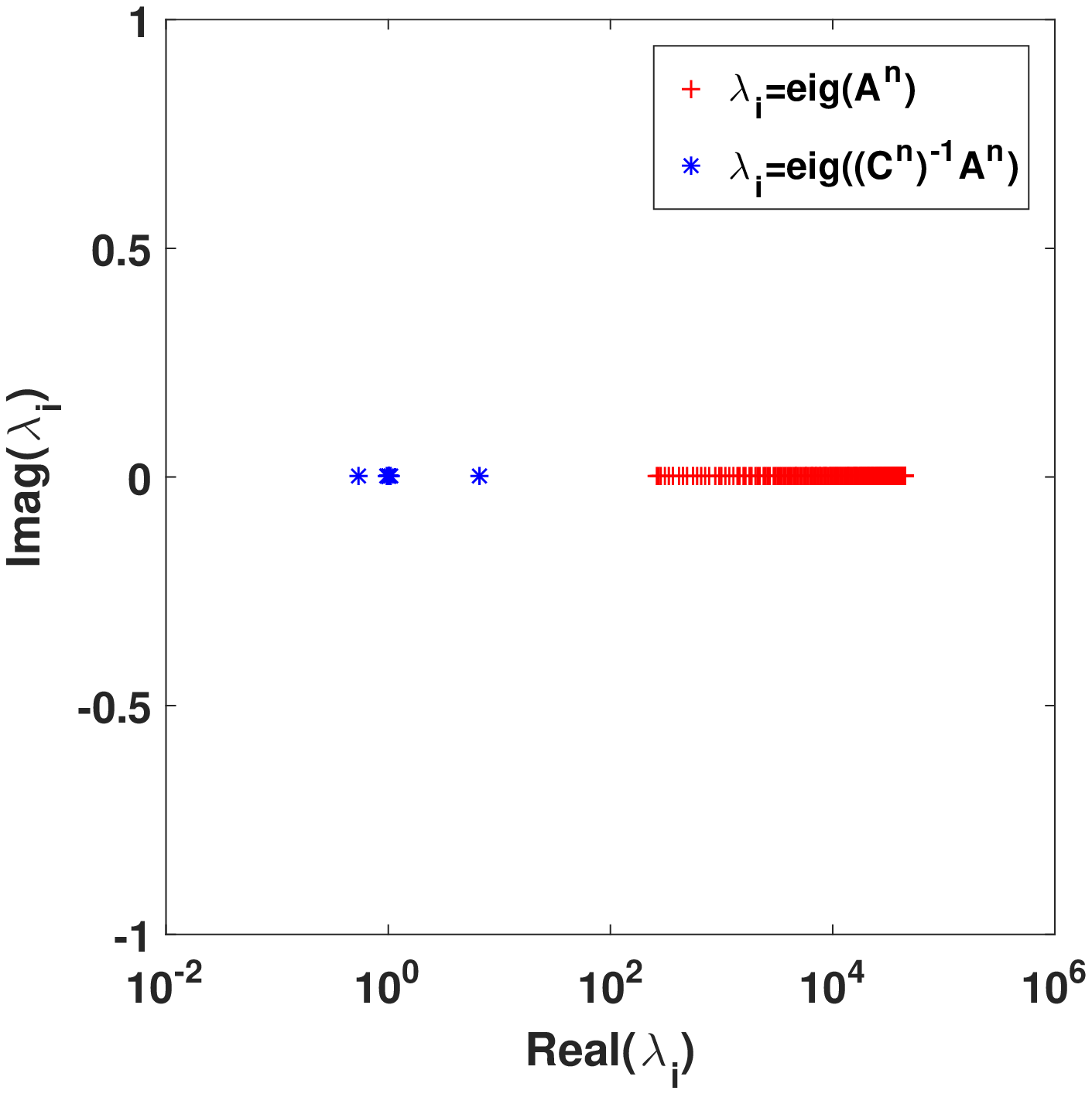}}\\
    (\textbf{a}) $n$=0 \hspace{5.8cm} (\textbf{b}) $n$ = 1 \\
  \caption{Spectrum of original matrice (red) and R. Chan-based preconditioned matrice (blue) for Example \ref{example1} at time level (\textbf{a}) $n=0$ and (\textbf{b}) $n=1$, respectively, when $M$ = $N$ = 256, $J$ = 50, $\beta$ = 1.8, and $T$ = 1.5.}
  \label{fig3}
\end{figure}

Some numerical results of the maximum errors as well as the spatial convergence orders (accuracy) for Example \ref{example1} with $\beta=1.2,~1.5$ and $1.8$ when $T=1.5,~J=50,~N=1000$ are recorded in Table \ref{tab1}. The second-order convergence of the difference scheme \eqref{2.17}-\eqref{2.19} in space can be obtained, and the results are in good agreement with what we expect.

When $T=1.5,~J=50,~M=1000$, Table \ref{tab2} provides some numerical results of the maximum errors and the temporal convergence orders for Example \ref{example1} with $\beta=1.2,~1.5$ and $1.8$. Form Table \ref{tab2}, we can see that the temporal convergence order of the difference scheme \eqref{2.17}-\eqref{2.19} is 2, which is consistent with the theoretical analysis.

Table \ref{tab3} gives the maximum errors and distributed-order integral convergence rate for Example \ref{example1} with $\beta=1.2,~1.5$ and $1.8$ respectively at $T=1.5,~M=2000,~N=2000$ and various values of $J$. The desirable second-order convergence of the difference scheme \eqref{2.17}-\eqref{2.19} in distributed-order is verified. According to the results listed in these three tables, the convergence accuracy of the difference scheme \eqref{2.17}-\eqref{2.19} of $\mathcal{O}(h^2+\tau^2+\Delta\alpha^2)$ can be observed.

From Table \ref{tab4}, one can see that the CPU time of the PCG method with circulant preconditioners is much less than that of the Cholesky method and the CG method. We also see that the PCG methods exhibit excellent performance in terms of iteration steps, and the number of iteration steps barely increases as $M$ and $N$ increase rapidly. The performance of the R. Chan-based circulant preconditioner is best among all.

The eigenvalues of the original matrix $A^{n}$ and the preconditioned matrix $(C^n)^{-1}A^n$ are plotted in Figs. \ref{fig2}-\ref{fig3}. We can see that the eigenvalues of the preconditioned matrix $(C^n)^{-1}A^n$ lie within a small interval around 1, expect for few outliers, yet all the eigenvalues are well separated away from 0. This confirms that the circulant preconditioning have nice clustering properties.


\begin{figure}[!hbt]
  \centering
  \subfigure{
    \includegraphics[width = 6 cm]{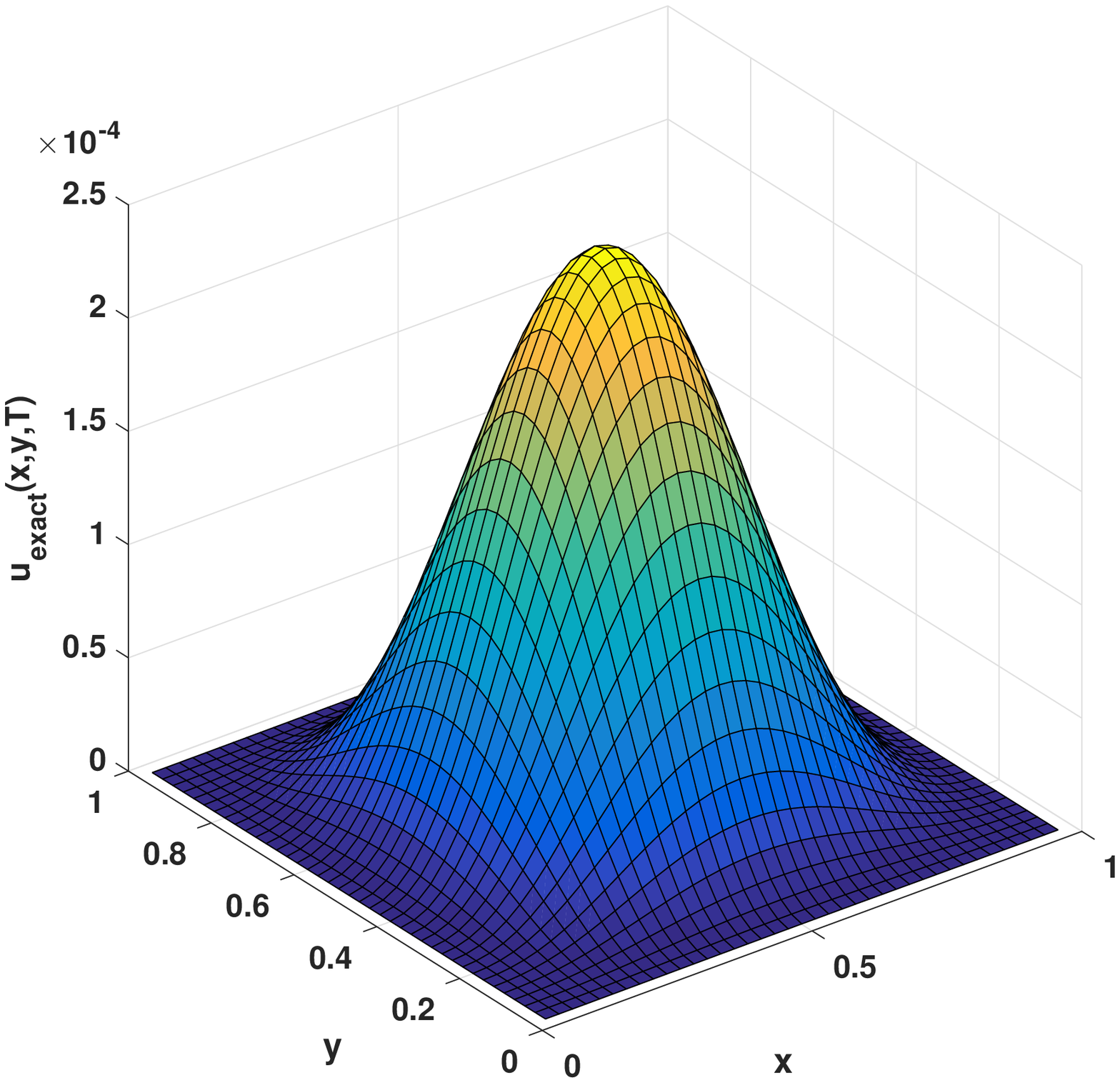}}
     \hspace{1 cm}
  \subfigure{
    \includegraphics[width = 6 cm]{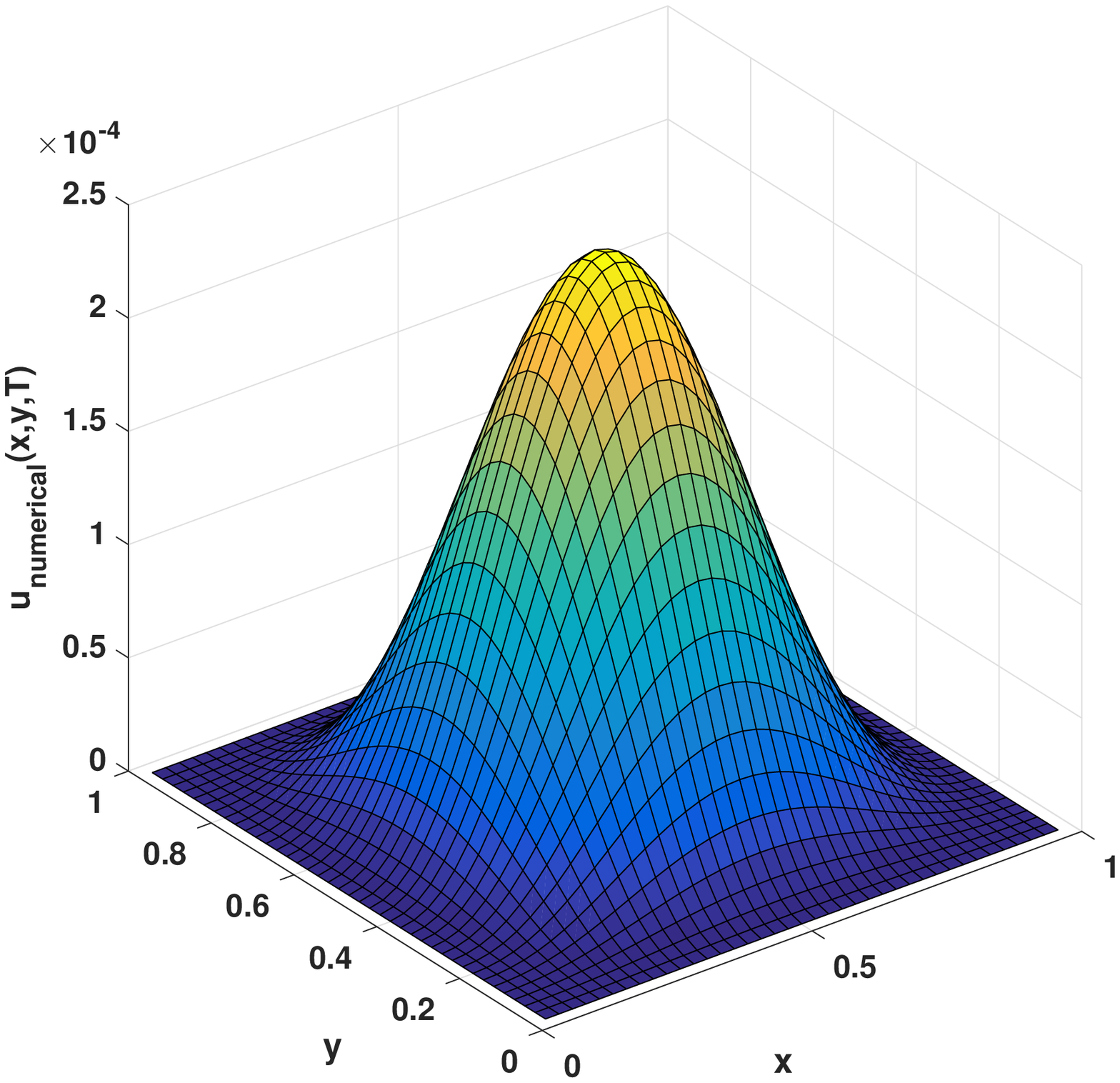}}\\
    (\textbf{a}) $T=1$, $\beta$ = $\gamma$ = 1.8 \hspace{3cm} (\textbf{b}) $T=1$, $\beta$ = $\gamma$ = 1.8 \\
     \subfigure{
    \includegraphics[width = 6 cm]{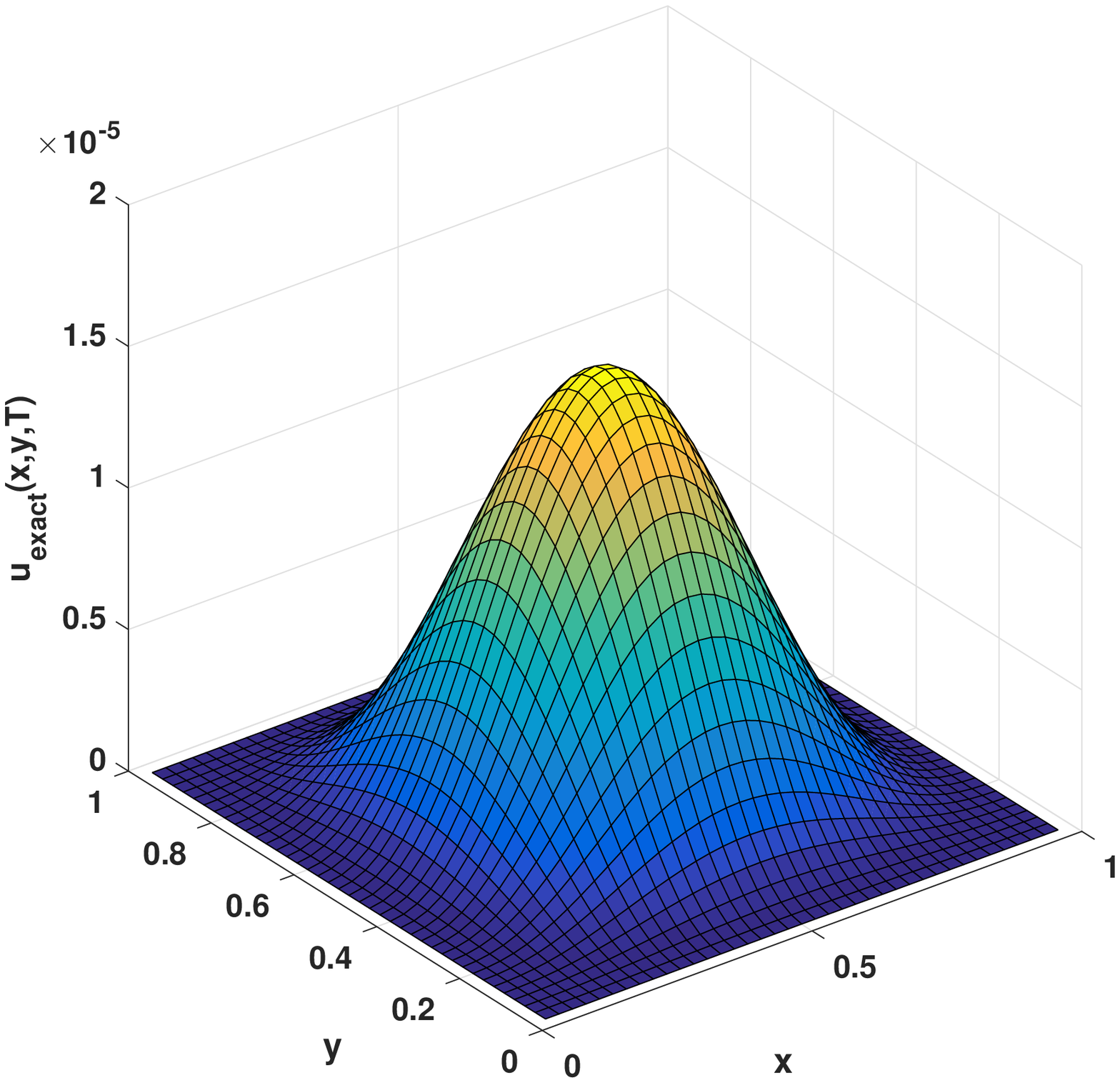}}
     \hspace{1 cm}
  \subfigure{
    \includegraphics[width = 6 cm]{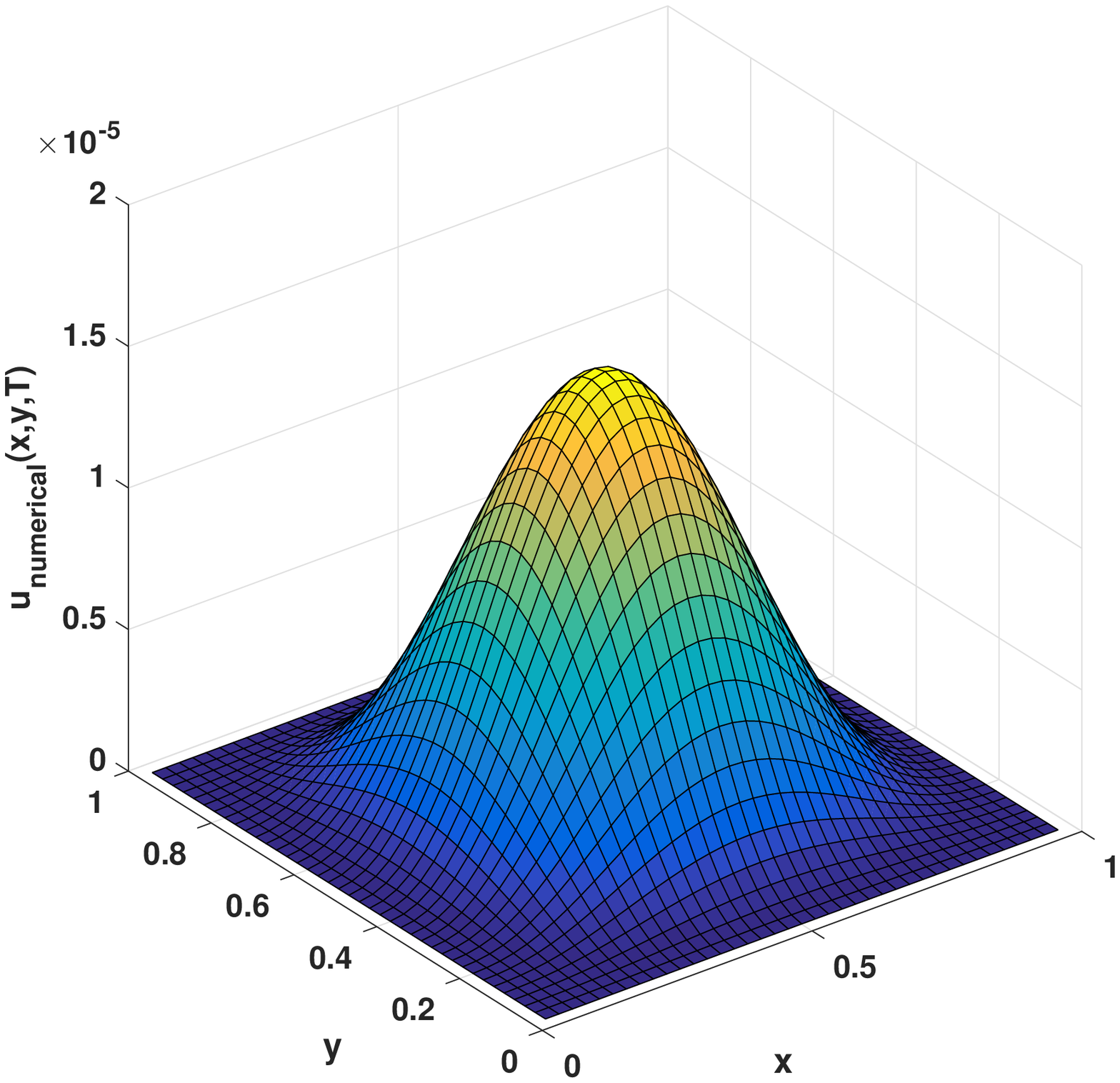}}\\
    (\textbf{c}) $T=0.5$, $\beta$ = $\gamma$ = 1.3 \hspace{3cm} (\textbf{d}) $T=0.5$, $\beta$ = $\gamma$ = 1.3 \\
  \caption{The solution surfaces obtained from Example \ref{example2} at $J$ = 50, $\widetilde{M}$ =40 and $N$ = 10: (\textbf{a}) the exact solution with $T=1$, $\beta$ = $\gamma$ = 1.8; (\textbf{b}) the numerical solution with $T=1$, $\beta$ = $\gamma$ = 1.8 by the scheme \eqref{3.12}-\eqref{3.14}; (\textbf{c}) the exact solution with $T=0.5$, $\beta$ = $\gamma$ = 1.3; (\textbf{d}) the numerical solution with $T=0.5$, $\beta$ = $\gamma$ = 1.3 by the scheme \eqref{3.12}-\eqref{3.14};}
  \label{fig4}
\end{figure}

\begin{table}[t]\small\tabcolsep=5.3pt
\begin{center}
\caption{{\small {Maximum errors and spatial convergence orders of difference scheme \eqref{3.12}-\eqref{3.14} for Example \ref{example2} with $T$ = 1.5; $J$ = 50; $N$ = 2000.}}}
\begin{tabular}{ccccccc}
\hline  & \multicolumn{2}{c}{$\beta=\gamma=1.2$} & \multicolumn{2}
{c}{$\beta=\gamma=1.5$} & \multicolumn{2}{c}{$\beta=\gamma=1.8$} \\
[-2pt] \cmidrule(lr){2-3} \cmidrule(lr){4-5} \cmidrule(lr){6-7} \\ [-11pt]
 $\widetilde{M}$ & $e({\widetilde{h}},\tau,\Delta\alpha)$ & ${\widetilde{rate}_h}$ & $e({\widetilde{h}},\tau,\Delta\alpha)$ &${\widetilde{rate}_h}$ & $e({\widetilde{h}},\tau,\Delta\alpha)$ & ${\widetilde{rate}_h}$\\
\hline
8  & 1.287397e-05 &  -    & 2.196133e-05 &   -    & 3.333244e-05 &   -    \\
16 & 3.193383e-06 &2.0113 & 5.413665e-06 & 2.0203 & 8.121444e-06 & 2.0371 \\
32 & 7.961929e-07 &2.0039 & 1.348003e-06 & 2.0058 & 2.016181e-06 & 2.0101 \\
64 & 1.982329e-07 &2.0059 & 3.360192e-07 & 2.0042 & 5.025245e-07 & 2.0044 \\
128& 4.880024e-08 &2.0222 & 8.327729e-08 & 2.0125 & 1.249136e-07 & 2.0083 \\
\hline
\end{tabular}
\label{tab5}
\end{center}
\end{table}

\begin{table}[t]\small\tabcolsep=5.3pt
\begin{center}
\caption{{\small {Maximum errors and temporal convergence orders of difference scheme \eqref{3.12}-\eqref{3.14} for Example \ref{example2} with $T$ = 1.5; $J$ = 50; $\widetilde{M}$ = 300.}}}
\begin{tabular}{ccccccc}
\hline  & \multicolumn{2}{c}{$\beta=\gamma=1.2$} & \multicolumn{2}
{c}{$\beta=\gamma=1.5$} & \multicolumn{2}{c}{$\beta=\gamma=1.8$} \\
[-2pt] \cmidrule(lr){2-3} \cmidrule(lr){4-5} \cmidrule(lr){6-7} \\ [-11pt]
 $N$  & $e({\widetilde{h}},\tau,\Delta\alpha)$ & ${\widetilde{rate}_\tau}$ & $e({\widetilde{h}},\tau,\Delta\alpha)$ &${\widetilde{rate}_\tau}$ & $e({\widetilde{h}},\tau,\Delta\alpha)$ & ${\widetilde{rate}_\tau}$\\
\hline
4 & 4.143174e-05 &   -  & 4.904054e-05&   -  & 5.654620e-05&   -   \\
8 & 1.114619e-05 &1.8942& 1.334121e-05&1.8781& 1.550841e-05&1.8664 \\
16& 2.888455e-06 &1.9482& 3.479612e-06&1.9389& 4.067734e-06&1.9308 \\
32& 7.263603e-07 &1.9915& 8.810213e-07&1.9817& 1.031666e-06&1.9792 \\
64& 1.767839e-07 &2.0387& 2.124202e-07&2.0523& 2.460280e-07&2.0681 \\
\hline
\end{tabular}
\label{tab6}
\end{center}
\end{table}

\begin{table}[t]\small\tabcolsep=5.3pt
\begin{center}
\caption{{\small {Maximum errors and distributed-order integral convergence orders of difference scheme \eqref{3.12}-\eqref{3.14} for Example \ref{example2} with $T$ = 1.5; $\widetilde{M}$ = 800; $N$ = 2000.}}}
\begin{tabular}{ccccccc}
\hline  & \multicolumn{2}{c}{$\beta=\gamma=1.2$} & \multicolumn{2}
{c}{$\beta=\gamma=1.5$} & \multicolumn{2}{c}{$\beta=\gamma=1.8$} \\
[-2pt] \cmidrule(lr){2-3} \cmidrule(lr){4-5} \cmidrule(lr){6-7} \\ [-11pt]
 $J$  & $e(\widetilde{h},\tau,\Delta\alpha)$ & $\widetilde{rate}_{\Delta\alpha}$ & $e(\widetilde{h},\tau,\Delta\alpha)$ &$\widetilde{rate}_{\Delta\alpha}$ & $e(\widetilde{h},\tau,\Delta\alpha)$ & $\widetilde{rate}_{\Delta\alpha}$\\
\hline
1 & 2.037150e-06 &   -  & 1.777609e-06&   -  & 1.489954e-06&   -   \\
2 & 5.120092e-07 &1.9923& 4.455139e-07&1.9964& 3.719972e-07&2.0019 \\
4 & 1.274115e-07 &2.0067& 1.101005e-07&2.0166& 9.090557e-08&2.0329 \\
8 & 3.105365e-08 &2.0367& 2.609633e-08&2.0769& 2.053103e-08&2.1466 \\
\hline
\end{tabular}
\label{tab7}
\end{center}
\end{table}

\begin{example}\label{example2}
Consider the following two-dimensional time distributed-order and Riesz space fractional diffusion problem:
$$\begin{cases}
\int_0^1\Gamma(5-\alpha){}^C_0D_t^{\alpha}u(x,y,t)d\alpha=\frac{\partial^{\beta}u(x,y,t)}{\partial\mid
x\mid^{\beta}}+\frac{\partial^{\gamma}u(x,y,t)}{\partial\mid
y\mid^{\gamma}}+f(x,y,t),\quad (x,y)\in\Omega,~0<t\leq T,\\
u(x,y,t)=0,\quad (x,y)\in\partial\Omega,~0\leq t\leq T,\\
u(x,y,0)=0,\quad (x,y)\in\Omega,
\end{cases}$$
with $\Omega=(0,1)\times(0,1)$ and
\begin{align*}
f(x,y,t)=&f_0(x,y,t)-c_1t^4y^3(1-y)^3[f_1(x,y,t)-3f_2(x,y,t)+3f_3(x,y,t)-f_4(x,y,t)]\\
&-c_2t^4x^3(1-x)^3[g_1(x,y,t)-3g_2(x,y,t)+3g_3(x,y,t)-g_4(x,y,t)],
\end{align*}
where
\begin{align*}
c_1=-\frac{1}{2\cos(\beta\pi/2)},\quad c_2=-\frac{1}{2\cos(\gamma\pi/2)},
\end{align*}
and
\begin{align*}
&f_0(x,y,t)=24t^3(t-1)x^3(1-x)^3y^3(1-y)^3/\ln t,\\
&f_1(x,y,t)={\Gamma(4)}/{\Gamma(4-\beta)}[x^{3-\beta}+(1-x)^{3-\beta}],\\
&f_2(x,y,t)={\Gamma(5)}/{\Gamma(5-\beta)}[x^{4-\beta}+(1-x)^{4-\beta}],\\
&f_3(x,y,t)={\Gamma(6)}/{\Gamma(6-\beta)}[x^{5-\beta}+(1-x)^{5-\beta}],\\
&f_4(x,y,t)={\Gamma(7)}/{\Gamma(7-\beta)}[x^{6-\beta}+(1-x)^{6-\beta}],\\
&g_1(x,y,t)={\Gamma(4)}/{\Gamma(4-\gamma)}[y^{3-\gamma}+(1-y)^{3-\gamma}],\\
&g_2(x,y,t)={\Gamma(5)}/{\Gamma(5-\gamma)}[y^{4-\gamma}+(1-y)^{4-\gamma}],\\
&g_3(x,y,t)={\Gamma(6)}/{\Gamma(6-\gamma)}[y^{5-\gamma}+(1-y)^{5-\gamma}],\\
&g_4(x,y,t)={\Gamma(7)}/{\Gamma(7-\gamma)}[y^{6-\gamma}+(1-y)^{6-\gamma}].\\
\end{align*}
The exact solution of the example is $u(x,t)=t^4x^3(1-x)^3y^3(1-y)^3$.
\end{example}

\begin{table}[t]\small\tabcolsep=6.0pt
\begin{center}
\caption{{\small {Comparisons on Example \ref{example2} between the Cholesky method, the CG method, and the PCG method with different circulant preconditioners, where $\beta$ = $\gamma$ = 1.2, 1.5, 1.8, $J$ = 50 and $T$ = 1.5.}}}
\begin{tabular}{cccccccccccc}
\\
\hline & & &\multicolumn{1}{c}{$\rm{Chol}$} & \multicolumn{2}
{c}{$\rm{CG}$}& \multicolumn{2}{c}{$\rm{PCG(S)}$} & \multicolumn{2}
{c}{$\rm{PCG(T)}$} & \multicolumn{2}{c}{$\rm{PCG(C)}$} \\
[-2pt] \cmidrule(lr){4-4} \cmidrule(lr){5-6} \cmidrule(lr){7-8} \cmidrule(lr){9-10} \cmidrule(lr){11-12} \\ [-11pt]
 $\beta$=$\gamma$  & $\widetilde{M}$ & $N$ & $\rm{CPU(s)}$ & $\rm{CPU(s)}$ & $\rm{Iter}$ & $\rm{CPU(s)}$ & $\rm{Iter}$ & $\rm{CPU(s)}$ & $\rm{Iter}$ & $\rm{CPU(s)}$ & $\rm{Iter}$\\
\hline
    &$2^3$ &$2^2$ &0.00  &0.00 &10.0 &0.01 &11.0 &0.01 &8.0  &\textbf{0.01} &\textbf{8.0} \\
    &$2^4$ &$2^3$ &0.01  &0.02 &17.0 &0.02 &11.0 &0.02 &9.0  &\textbf{0.02} &\textbf{9.0} \\
1.2 &$2^5$ &$2^4$ &0.19  &0.24 &23.0 &0.17 &11.0 &0.16 &10.0 &\textbf{0.16} &\textbf{10.0} \\
    &$2^6$ &$2^5$ &7.17  &1.17 &29.0 &0.80 &11.0 &0.77 &10.2 &\textbf{0.80} &\textbf{11.0} \\
    &$2^7$ &$2^6$ &606.65&14.87&35.0 &7.55 &11.0 &7.67 &11.0 &\textbf{7.74} &\textbf{11.0} \\
  \\
    &$2^3$ &$2^2$ &0.00  &0.00 &10.0 &0.01 &14.0 &0.01 &8.3  &\textbf{0.00} &\textbf{8.0} \\
    &$2^4$ &$2^3$ &0.01  &0.03 &22.0 &0.02 &13.0 &0.02 &11.0 &\textbf{0.02} &\textbf{10.0} \\
1.5 &$2^5$ &$2^4$ &0.18  &0.33 &35.0 &0.21 &15.0 &0.19 &13.0 &\textbf{0.18} &\textbf{12.0} \\
    &$2^6$ &$2^5$ &7.17  &1.89 &54.0 &0.93 &14.1 &0.98 &15.0 &\textbf{0.88} &\textbf{13.0} \\
    &$2^7$ &$2^6$ &605.57&29.15&79.0 &9.64 &16.0 &10.34&17.0 &\textbf{9.49} &\textbf{15.0} \\
 \\
    &$2^3$ &$2^2$ &0.00  &0.01 &10.0 &0.01 &15.3 &0.01 &10.0 &\textbf{0.00} &\textbf{8.0}  \\
    &$2^4$ &$2^3$ &0.02  &0.04 &27.0 &0.03 &16.0 &0.03 &13.0 &\textbf{0.02} &\textbf{11.0} \\
1.8 &$2^5$ &$2^4$ &0.18  &0.44 &48.9 &0.24 &18.0 &0.23 &17.0 &\textbf{0.19} &\textbf{13.0} \\
    &$2^6$ &$2^5$ &7.12  &2.72 &87.9 &1.13 &19.0 &1.21 &20.9 &\textbf{0.96} &\textbf{15.0} \\
    &$2^7$ &$2^6$ &606.44&51.03&150.0&11.83&21.0 &14.44&26.3 &\textbf{10.84}&\textbf{18.0} \\
\hline
\end{tabular}
\label{tab8}
\end{center}
\end{table}
For simplicity, take $h_1=h_2=\widetilde{h}$, and $M_1=M_2=\widetilde{M}$.
Let $e(\widetilde{h},\tau,\Delta\alpha)=\max\limits_{{0\leq i\leq M_1},~{0\leq j\leq M_2}\atop{0\leq n\leq N}}\\
|u(x_i,y_j,t_n, \Delta\alpha)-u_{ij}^n|$, where $u(x_i,y_j,t_n,\Delta\alpha)$ and $u_{ij}^n$ represent the exact solution and numerical solution with the step sizes $\widetilde{h}$, $\tau$ and $\Delta\alpha$, respectively. The convergence orders are defined as
$$\widetilde{rate}_h=\log_2\frac{e(\widetilde{h},\tau,\Delta\alpha)}{e(\widetilde{h}/2,\tau,\Delta\alpha)},~
\widetilde{rate}_{\tau}=\log_2\frac{e(\widetilde{h},\tau,\Delta\alpha)}{e(\widetilde{h},\tau/2,\Delta\alpha)},~
\widetilde{rate}_{\Delta\alpha}=\log_2\frac{e(\widetilde{h},\tau,\Delta\alpha)}{e(\widetilde{h},\tau,{\Delta\alpha}/2)}.$$

Fig. \ref{fig4} exhibits the solution surface of Example \ref{example2} with $J$ = 50, $\widetilde{M}$ =40, $N$ = 10 at $T=1$, $\beta$ = $\gamma$ = 1.8 and $T=0.5$, $\beta$ = $\gamma$ = 1.3, respectively. It can be seen that the numerical solutions are in good conformity with the exact solutions.

We compute the convergence orders in spatial of the difference scheme \eqref{3.12}-\eqref{3.14} for Example \ref{example2}. When $T$ = 1.5, $J$ = 50 and $N$ = 2000, Table \ref{tab5} lists the maximum errors and convergence orders in spatial of the difference scheme with $\beta$ = $\gamma$ = 1.2, 1.5 and 1.8, respectively. From the numerical results we can conclude that the difference scheme \eqref{3.12}-\eqref{3.14} has the second-order convergence in spatial directions.

When taking the fixed $T$ = 1.5, $J$ = 50, $\widetilde{M}$ = 100, the maximum errors and convergence orders in temporal of the difference scheme \eqref{3.12}-\eqref{3.14} with $\beta$ = $\gamma$ = 1.2, 1.5 and 1.8 are listed in Table \ref{tab6}, respectively. From the numerical results in Table \ref{tab6} we can clearly see that the convergence order in temporal of the difference scheme \eqref{3.12}-\eqref{3.14} is also nearly 2, which is in accord with the theoretical analysis.

The numerical accuracy of scheme \eqref{3.12}-\eqref{3.14} for Example \ref{example2} in distributed-order integral variable is investigated. When $T$ = 1.5, $J$ = 50, $\widetilde{M}$ = 100, Table \ref{tab7} displays the computational results using the difference scheme \eqref{3.12}-\eqref{3.14} with $\beta$ = $\gamma$ = 1.2, 1.5 and 1.8, respectively. One can draw the conclusion that the convergence accuracy of distributed-order integral variable is $\mathcal{O}(\Delta\alpha^2)$.  Namely, the numerical convergence order of the difference scheme \eqref{3.12}-\eqref{3.14} is $\mathcal{O}(h_1^2+h_2^2+\tau^2+\Delta\alpha^2)$.

From Table \ref{tab8}, we can observe that the CPU time of the PCG method with circulant preconditioners is much less than that of the Cholesky method and the CG method. We also see that the number of iteration steps of the PCG method barely increases as the number of the spatial grid points increases. The performance of the R. Chan-based circulant preconditioner is best amongst all.

The spectrum of the original matrix $M^{n}$ and the preconditioned matrix $(C_2^n)^{-1}M^n$ are plotted in Figs. \ref{fig5}-\ref{fig6}. These two figures also confirm that the circulant preconditioning have nice clustering properties. It shows that the eigenvalues of the preconditioned matrix are well grouped around 1 expect for few outliers. The vast majority of the eigenvalues are well separated away from 0.

\begin{figure}[!hbt]
  \centering
  \subfigure{
    \includegraphics[width = 6.7 cm]{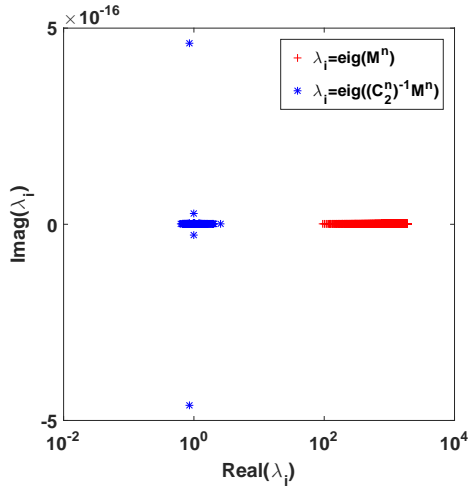}}
     \hspace{1 cm}
  \subfigure{
    \includegraphics[width = 6.7 cm]{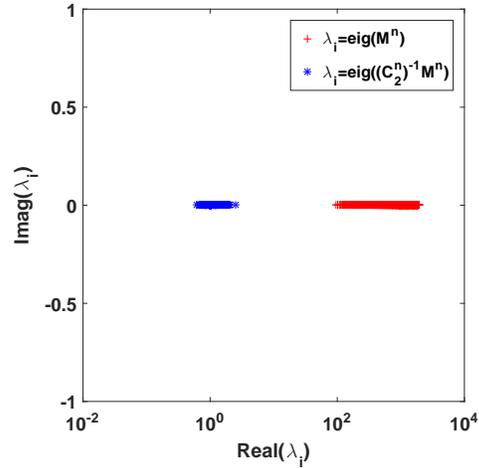}}\\
    (\textbf{a}) $n$=0 \hspace{5.8cm} (\textbf{b}) $n$ = 1 \\
  \caption{Spectrum of original matrice (red) and R. Chan-based preconditioned matrice (blue)  for Example \ref{example2} at time level (\textbf{a}) $n=0$ and (\textbf{b}) $n=1$, respectively, when $\widetilde{M}$ = $N$ = 64, $J$ = 50, $\beta$ = $\gamma$ = 1.5, and $T$ = 1.5.}
  \label{fig5}
\end{figure}

\begin{figure}[!hbt]
  \centering
  \subfigure{
    \includegraphics[width = 6.7 cm]{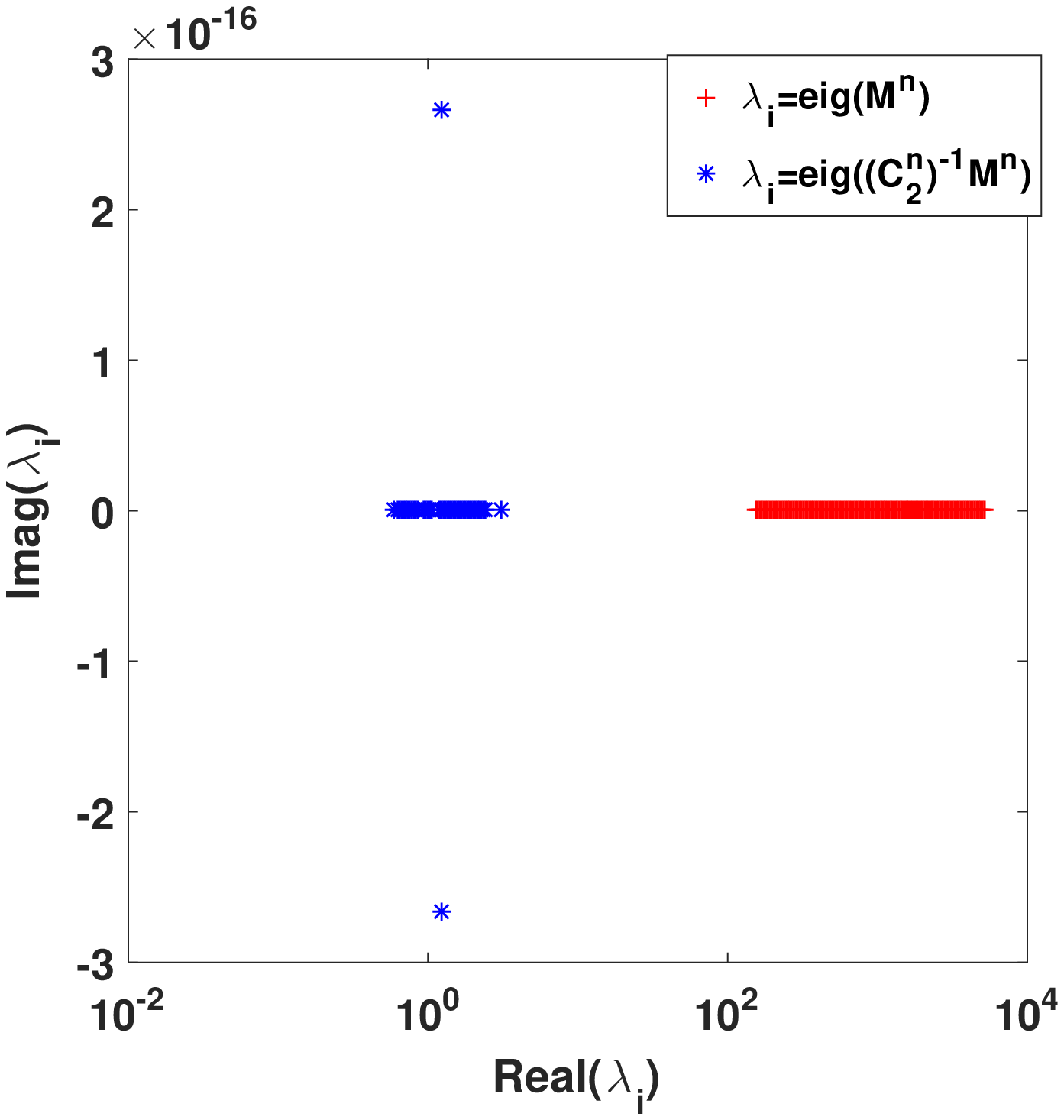}}
     \hspace{1 cm}
  \subfigure{
    \includegraphics[width = 6.7 cm]{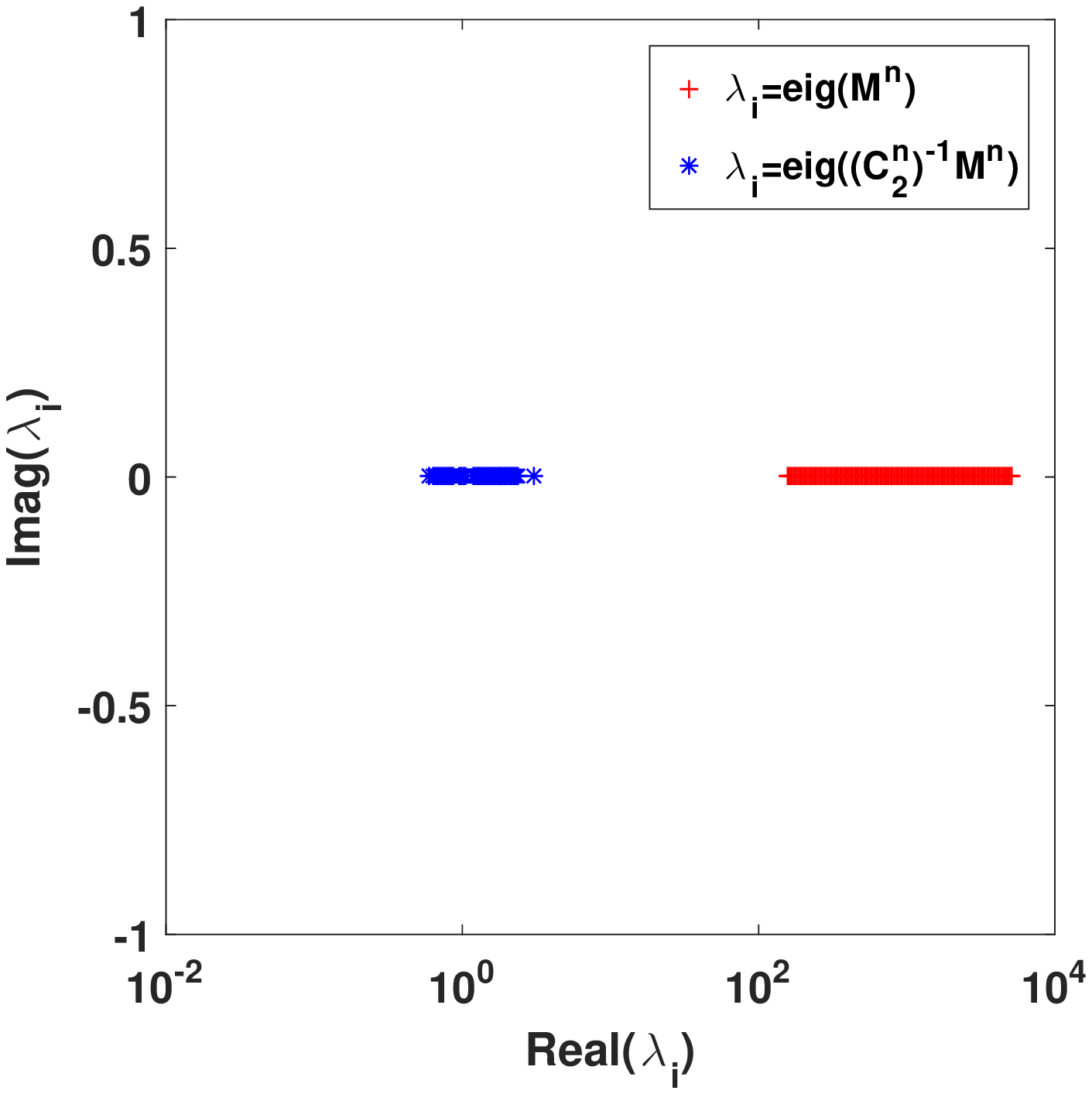}}\\
    (\textbf{a}) $n$=0 \hspace{5.8cm} (\textbf{b}) $n$ = 1 \\
  \caption{Spectrum of original matrice (red) and R. Chan-based preconditioned matrice (blue) for Example \ref{example2} at time level (\textbf{a}) $n=0$ and (\textbf{b}) $n=1$, respectively, when $\widetilde{M}$ = $N$ = 128, $J$ = 50, $\beta$ = $\gamma$ = 1.5, and $T$ = 1.5.}
  \label{fig6}
\end{figure}

\section{Conclusion}\label{section5}

\quad In this paper, several efficient second-order difference schemes are proposed for one- and two-dimensional
TDRFDEs. We first discretize the time distributed-order integral term by using composite trapezoid formula and transform the TDRFDEs into the multi-term time-space FDEs. Then we solve the multi-term time-space FDEs with the second-order accurate interpolation approximation on a special point. We prove that the proposed difference schemes are uniquely solvable, unconditionally stable and convergent in the mesh $L_2$-norm with second-order accuracy in time, space and distributed-order integral variables. Moreover, we have proposed an efficient implementation of the proposed scheme based on the PCG method with R. Chan-based circulant preconditioner, which only requires $\mathcal{O}((M-1)\log(M-1))$ computational complexity and $\mathcal{O}((M-1)$ storage cost. Numerical experiments confirm the theoretical results and show the effectiveness of the proposed preconditioned method.
In future work, we will focus on the development of the effective numerical methods for solving high-dimensional time distributed-order fractional diffusion-wave equations.

\section*{Acknowledgments}
\quad {\it This research is supported by NSFC (61772003, 11501085, 11601365 and 11701467),
the Fundamental Research Funds for the Central Universities (ZYGX2016J132 and ZYGX2016\\
J138) and the Shandong Science and Technology Department Foundation (J16LI06).}


\end{document}